 \numberwithin{equation}{section}
 \definecolor{db}{rgb}{0.0,0.0,0.8} 
\definecolor{dg}{rgb}{0.0,0.55,0.14}
\definecolor{dr}{rgb}{0.5,0,0.07}
\newtheorem{theorem}{Theorem}[section]
\newtheorem{proposition}[theorem]{Proposition}
\newtheorem{lemma}[theorem]{Lemma}
\newtheorem{corollary}[theorem]{Corollary}
\theoremstyle{definition}
\theoremstyle{definition}
\theoremstyle{definition}
\theoremstyle{definition}
\theoremstyle{definition}
\newtheorem{definition}[theorem]{Definition}
\theoremstyle{definition}
\newtheorem{remark}[theorem]{Remark}
\theoremstyle{definition}
\newcounter{step}
\newcommand{\rlemma}[1]{Lemma~\ref{#1}}
\newcommand{\rth}[1]{Theorem~\ref{#1}}
\newcommand{\rprop}[1]{Proposition~\ref{#1}}
\newcommand{\rrem}[1]{Remark~\ref{#1}}
\newcommand{\rsec}[1]{Section~\ref{#1}}
\def\be{\begin{equation}}
\def\ee{\end{equation}}
\def\bes{\begin{equation*}}
\def\ees{\end{equation*}}
\def\bt{\begin{theorem}}
\def\et{\end{theorem}}
\def\bpr{\begin{proposition}}
\def\epr{\end{proposition}}
\def\bl{\begin{lemma}}
\def\el{\end{lemma}}
\def\bc{\begin{corollary}}
\def\ec{\end{corollary}}
\def\br{\begin{remark}}
\def\er{\end{remark}}
\def\ben{\begin{enumerate}}
\def\bena{\begin{enumerate}[a)]}
\def\een{\end{enumerate}}
\def\bit{\begin{itemize}}
\def\iit{\end{itemize}}
\def\supp{\operatorname{supp}}
\def\dist{\operatorname{dist}}
\def\deg{\operatorname{deg}}
\def\div{\operatorname{div}}
\def\sgn{\operatorname{sgn}}
\def\pv{\operatorname{p. v. }}
 \newcommand{\Prod}{\mathop{\prod}\limits}
\DeclareMathAlphabet{\mathonebb}{U}{bbold}{m}{n}
\def\R{{\mathbb R}}
\def\Z{{\mathbb Z}}
\def\fo{\forall\, }
\def\va{\varphi}
\def\ue{u_\varepsilon}
\def\d{\displaystyle}
\def\im{\imath}
\def\ve{\varepsilon}
\def\p{\partial}
\def\l{\label}
\def\O{\Omega}
\def\po{\partial\Omega}
\def\na{\nabla}
\def\so{{\mathbb S}^1}
\date{\today}
\title{Asymptotic behavior of critical points of an energy involving a
  loop-well potential}
\author{Petru Mironescu    \thanks{Universit\'e de Lyon,  CNRS UMR 5208, Universit\'e Lyon 1, Institut Camille Jordan, 43 blvd. du 11 novembre 1918, F-69622 Villeurbanne cedex, France. Email address: mironescu$@$math.univ-lyon1.fr}         \and
Itai Shafrir \thanks{Department of Mathematics, Technion - I.I.T., 32000 Haifa, Israel. Email address: shafrir$@$math.technion.ac.il}
}
\begin{document}
%%%%%%title page continued

\maketitle

%
%\vskip 5mm
\begin{abstract}
We describe the asymptotic behavior of critical points of $\int_\O [(1/2)|\na u|^2+W(u)/\ve^2]$ when $\ve\to 0$. Here, $W$ is a Ginzburg-Landau type potential, vanishing on a simple closed curve $\Gamma$.  Unlike the case of the standard Ginzburg-Landau potential $W(u)=(1-|u|^2)^2/4$, studied by Bethuel, Brezis and H\'elein, we do not assume any symmetry on $W$ or $\Gamma$. In order to overcome the difficulties due to the lack of symmetry, we develop new tools which might be of independent interest. 
\end{abstract}

%\begin{abstract}
%X. 
%\footnotetext{\emph{Key words} X}     
%\footnotetext{\emph{Subject classification} X}
%\end{abstract}
%\renewcommand{\thefootnote}{\arabic{footnote}} 
%%%%%%title page continued
\section{Statement of the problem}

Let $\Omega\subset\R^2$ be a smooth bounded star-shaped domain. Let $\Gamma\subset\R^2$ be a smooth simple curve and let $g:\partial\Omega\to\Gamma$ be a smooth boundary datum of degree $d$. Consider, for every $\ve>0$, a critical point  $u_\ve\in H^1_g(\Omega ;\R^2)$ of the energy 
 \begin{equation}
\label{eq:90}
 E_\ve(u)=\int_\Omega\left[\frac{1}{2}|\nabla u|^2+\frac{W(u)}{\varepsilon^2}\right].
 \end{equation}
 
 Here, $W:\R^2\to [0,\infty)$ is a smooth potential vanishing precisely on $\Gamma$; for the exact assumptions on $W$, see \eqref{h1}--\eqref{h5} below.

In the Ginzburg-Landau (GL) case, i.e., when $W(u)=(1-|u|^2)^2/4$, the
asymptotic behavior of $\{\ue\}$ when $\varepsilon\to 0$ was studied by
Bethuel, Brezis and H\'elein, first for minimizers when  the
boundary condition has zero degree in
\cite{bbh93}, and later for minimizers and, more generally, for critical points  
for arbitrary  boundary datum in the seminal work \cite{bbh94}. 
 
The analysis in \cite{bbh94} for {\em minimizers} of the
 GL energy can be adapted 
 with no significant difficulty to the case of general $W$, at least
 when $W$ is {\em non-degenerate}, see \eqref{eq:117}. Using more involved arguments, it is even possible to describe the asymptotic behavior of minimizers in the case of a general 
 boundary condition $g$
 that does not necessarily take values into $\Gamma$; see    Andr\'e and Shafrir \cite{ansh07}.

 We address here the question of the asymptotic behavior of  critical points of the
 energy \eqref{eq:90}, i.e., of solutions of 
 \begin{equation}
 \label{eq:EL}
 \begin{cases}
 \Delta u_\ve=\d\frac{1}{\varepsilon^2}\nabla W(u_\ve)&\text {in }\Omega\\
 u_\ve=g&\text{on }\partial\Omega
 \end{cases}
 \end{equation}
that need not be energy minimizing with respect to their own boundary condition. As we will see below, the answer to this question requires new ideas and ingredients.  
We emphasize that the starshapeness condition on  $\Omega$ is
crucial to our analysis, as it was in \cite[Chapter~\rm{X}]{bbh94}.
 As far as we know the problem about
critical points in a general simply connected domain is still open
even in the case of the usual Ginzburg-Landau potential.

The method of proof in \cite[Chapter~\rm{X}]{bbh94} for critical
 points of the GL energy is based on a clever decomposition of the 
 gradient $\nabla\ue$. Its starting point is the identity
 \begin{equation}
\label{eq:108}
\frac{\partial}{\partial x_1} \left(\ue\times\frac{\p}{\p x_1}\ue\right) +  \frac{\partial}{\partial x_2} \left(\ue\times\frac{\p}{\p x_2}\ue\right)=0, 
\end{equation}
 which is a direct consequence of the fact that $W(u)=W(|u|)$ in the
 GL case. We could not find an analogous identity to \eqref{eq:108} for
 general $W$. Our method is different and relies on two main tools: 
\ben
\item
 Selection of \enquote{good rays} (see
 Subsection~\ref{subsec:log}).
 \item
A maximum principle for the phase
 (see \rprop{prop:max}). 
 \een
  Combined, they allow us to prove a crucial estimate, namely 
 \begin{equation}
\label{eq:118}
   E_\varepsilon(\ue)\leq C(|\log\varepsilon|+1).
 \end{equation}

 The first ingredient is new even for the GL energy (and leads to a simplification of the original arguments in \cite[Chapter~\rm{X}]{bbh94}), and the second one is much more subtle in the case of a general potential $W$ than in the GL case. 

\smallskip
For the analysis of solutions to \eqref{eq:EL} we will need, in the spirit of  \cite{bbh94}, the additional assumption that $\Omega$ is strictly star-shaped. This assumption enables us to prove that the second
 term in the energy \eqref{eq:90} remains bounded when
 $\varepsilon\to0$, and then to perform the \enquote{bad discs} construction \`a la
 Bethuel-Brezis-H\'elein~\cite{bbh94}, which is the starting point of the study of the
 location of the vortices.
 
  The remaining part of the analysis is similar to the one in \cite{bbh94} (with some technical complications), and leads to our main result, Theorem \ref{th:main} below. In order to state it, we first present all the assumptions on $\Gamma$ and $W$.
\be
\l{h1}
W:\R^2\to[0,\infty)\text{ is a smooth function satisfying }W^{-1}(\{0\})=\Gamma
\ee
% \be
% \l{h1'}
% W(z)\geq\alpha_0\dist^2(z,\Gamma)~\text{ if
% }\dist(z,\Gamma)\leq\tau_0\text{ (for some $\tau_0,\alpha_0>0$)}
% \ee
and
\be
\l{h2}
\Gamma\text{ is a simple closed smooth curve in }\R^2.  
\ee
We assume without loss
of generality that 
\be
\l{h3}
|\Gamma|=2\pi
\ee
 and consider
 \be
 \l{h4} 
 \tau:\so\to\Gamma\text{ an arc length parametrization of }\Gamma.
 \ee
 We also suppose that $W$ is {\em non-degenerate} in the following sense:
 \begin{equation}
   \label{eq:117}
   W(\zeta)\geq \mu\dist^2(\zeta,\Gamma)\text{ if }\dist(\zeta,\Gamma)<\delta, 
 \end{equation}
for some $\mu,\delta>0$ (and then it follows from \eqref{h1} that \eqref{eq:117} holds
on any compact subset of $\R^2$). 

   In addition, we impose the following coercivity assumption on the behavior of $W$ at infinity:
 \begin{equation}
\label{h5}
\frac{\partial W}{\partial r}(z)\geq 0\text{ for } |z|=r>R_0,
\end{equation}
 for some $R_0>\max\{|z|;\,z\in\Gamma\}$.
 
\begin{theorem}
\label{th:main}
 Let $\Omega$ be a smooth, bounded, strictly star-shaped domain in $\R^2$.
 Let $W$, $\Gamma$ and $\tau$ satisfy \eqref{h1}--\eqref{h5}. Let $g:\partial\Omega\to\Gamma$ be a smooth
 boundary condition of degree $d$. For each $\varepsilon>0$, let $u_\varepsilon$ denote a
 solution of \eqref{eq:EL}. Then up to a subsequence we have 
\begin{equation}
\label{eq:91}
 u_{\varepsilon_n}\to
  u_*=\tau\left(e^{\im\eta(z)}\left(\frac{z-a_1}{|z-a_1|}\right)^{D_1}\cdots\left(\frac{z-a_N}{|z-a_N|}\right)^{D_N}\right) \text{ in }C^{1,\alpha}\left(\overline\Omega\setminus \{a_1,\ldots, a_N\}\right),
\end{equation}
where 
\ben
\item
$a_1,\ldots,a_N\in\Omega$ are mutually distinct points.
\item
 $D_1,\ldots, D_N\in\Z\setminus\{0\}$ satisfy the compatibility condition $\sum_{j=1}^N D_j=d$.
\item
$\eta$ is a harmonic function in $\overline\Omega$. 
\item
$\alpha\in (0,1)$.
\een
\end{theorem}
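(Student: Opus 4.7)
The plan is to follow the two-part strategy signposted in the introduction: establish the a priori logarithmic bound
\[
E_\varepsilon(u_\varepsilon)\leq C(|\log\varepsilon|+1)
\]
using the good-rays selection together with the phase maximum principle of \rprop{prop:max} (the genuinely new input), and then adapt the Bethuel--Brezis--H\'elein machinery to the curve $\Gamma$. For this first step, since the conservation law \eqref{eq:108} is not available for a general $W$, I would not try to decompose $\nabla u_\varepsilon$ in the \cite{bbh94} fashion. Instead, in the tubular neighborhood of $\Gamma$ one may lift $u_\varepsilon\approx\tau(e^{\im\varphi_\varepsilon})$; a Fubini-type selection of a \enquote{good} ray $r\mapsto u_\varepsilon(x_0+r\omega)$ emanating from a well-chosen basepoint (a possibility guaranteed by star-shapeness) provides a one-dimensional energy control proportional to the average energy density, and the maximum principle for $\varphi_\varepsilon$ then propagates this radial control into a global $|\log\varepsilon|$ estimate for $E_\varepsilon(u_\varepsilon)$.

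Next, applying the Pohozaev identity to \eqref{eq:EL} and using that $\Omega$ is strictly star-shaped yields the uniform potential bound $\int_\Omega W(u_\varepsilon)/\varepsilon^2\leq C$, with \eqref{h5} keeping $u_\varepsilon$ in a fixed compact set and \eqref{eq:117} then controlling $\dist(u_\varepsilon,\Gamma)$ in $L^2$. This opens the door to the bad-discs construction of \cite{bbh94}: cover the set $\{x:\dist(u_\varepsilon(x),\Gamma)>\delta/2\}$ by finitely many (bounded uniformly in $\varepsilon$) disjoint discs of radius $\lambda\varepsilon$ with pairwise-separated centers; along a subsequence $\varepsilon_n\to0$, the centers cluster at distinct interior points $a_1,\dots,a_N\in\Omega$ (interiority uses the boundary regularity afforded by $g(\partial\Omega)\subset\Gamma$).

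On any compact $K\subset\overline\Omega\setminus\{a_1,\dots,a_N\}$, standard good-disc estimates give $|\nabla u_{\varepsilon_n}|\leq C(K)$ and $\dist(u_{\varepsilon_n},\Gamma)\to 0$ uniformly; elliptic regularity for \eqref{eq:EL} then upgrades this to a uniform $C^{1,\alpha}(K)$ bound, so after extraction $u_{\varepsilon_n}\to u_*$ in $C^{1,\alpha}_{\mathrm{loc}}(\overline\Omega\setminus\{a_j\})$ with $u_*$ taking values in $\Gamma$. Lift $u_*=\tau(\phi_*)$ with $\phi_*:\overline\Omega\setminus\{a_j\}\to\S^1$ smooth; passing to the limit in the Euler--Lagrange system and using that $\tau$ is an arc-length parametrization shows that $\phi_*$ is a harmonic map into $\S^1$, hence locally $\phi_*=e^{\im\psi}$ for a real harmonic $\psi$. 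Setting $D_j:=\deg(\phi_*,\partial B(a_j,r))\in\Z$ for small $r$, homotopy invariance and the degree of $g$ force $\sum_j D_j=d$, while the classical vortex lower bound $E_\varepsilon(u_\varepsilon;B(a_j,r))\geq\pi|D_j|\,|\log\varepsilon|-C$ rules out $D_j=0$. Dividing $\phi_*$ by $\prod_j\bigl((z-a_j)/|z-a_j|\bigr)^{D_j}$ produces a smooth $\S^1$-valued map on $\overline\Omega$ with zero winding on $\partial\Omega$, whose global logarithm $\im\eta$ yields the harmonic function $\eta$ of \eqref{eq:91}.

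The decisive obstacle is plainly the logarithmic bound of Step 1. The lift $\varphi_\varepsilon$ is defined only where $u_\varepsilon$ stays close to $\Gamma$, so the good-ray argument has to treat the bad regions separately; and the maximum principle of \rprop{prop:max} is considerably more delicate for a generic potential than in the $O(2)$-symmetric Ginzburg--Landau setting, where the equation satisfied by $u_\varepsilon\times\nabla u_\varepsilon$ is automatically of divergence form. Once the logarithmic bound is secured, everything downstream is a technical but essentially standard adaptation of \cite[Chapter~X]{bbh94} to the curve $\Gamma$.
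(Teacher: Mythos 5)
Your high-level plan tracks the paper's strategy, but the logical order in Step~1 is backwards and this is not a cosmetic issue. The \enquote{good rays} of Subsection~\ref{subsec:log} connect the boundaries of the \emph{already constructed} bad discs $B_{\lambda\ve}(x_j^\ve)$ to $\p\O$, and the phase $\va$ is defined only on $\O_\ve$ after those discs are removed; selecting the rays is a Fubini averaging over the angle around each $x_j^\ve$, and the maximum principle (Proposition~\ref{prop:max}) is applied on the slit domain $\omega_\ve$ to get an $L^\infty$ bound on the regular part $\eta$ of the phase (Lemma~\ref{lem:bound-eta}), which after testing the equation for $\eta$ against $\eta$ itself yields \eqref{eq:118}. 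Hence Pohozaev (Lemma~\ref{lem:pohozaev}, which is where strict star-shapeness actually enters) and the bad discs construction must \emph{precede} the $|\log\ve|$ bound, not follow it; star-shapeness is not what \enquote{guarantees} the ray selection. You do flag the problem that $\va_\ve$ is only defined near $\Gamma$, but you do not resolve it, and the fix (cut the domain along per-disc rays and apply the maximum principle on $\omega_\ve$) is exactly what you need to spell out.

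Two further gaps in the endgame. First, the claim that the vortex lower bound $E_\ve(\ue;B(a_j,r))\geq\pi|D_j|\,|\log\ve|-C$ \enquote{rules out $D_j=0$} is a non-argument: for $D_j=0$ the bound is vacuous, and the degree $D_j=\sum_{\ell\in J_j}d_\ell$ could a priori vanish by cancellation among the $d_\ell$. The paper rules this out by showing (via \eqref{eq:104}) that $W(\ue)/\ve^2$ converges weak-$*$ to $\sum m_j\delta_{a_j}$ with $m_j>0$, and then the Hopf-differential computation (Proposition~\ref{p515}) gives $m_j=\pi D_j^2/2$, forcing $D_j\neq 0$. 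Second, \enquote{dividing $\phi_*$ by $\prod_j((z-a_j)/|z-a_j|)^{D_j}$ and taking a global logarithm} does not by itself give a harmonic $\eta$: an $\so$-valued harmonic map in $W^{1,p}$, $p<2$, can locally have phase $D_j\theta_j+c_j\log r_j+h$ near $a_j$, and the logarithmic coefficients $c_j$ must be shown to vanish. This again requires the Hopf-differential argument, together with the prerequisite $W^{1,p}(\O)$ bound for $\na\ue$ (Proposition~\ref{prop:W1p}), which your sketch does not establish and is not implied by the local $L^\infty$ gradient bound away from $\{a_1,\ldots,a_N\}$.
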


\smallskip
\par In the spirit of \cite{bbh94}, we may also prove that the
configuration $(a_1,\ldots,a_N)$ is a critical point
of a suitable renormalized energy associated with the degrees
$(D_j)_{j=1}^N$ and the boundary condition; see Remark \ref{rem:ren} in Section \ref{sec:nonzerodeg}.

Let us mention that non minimizing solutions do exist. For the GL energy, their existence  was
  established in different situations. In the special case where $\O$ is the unit disc and 
  $g(z)=z^d$, with $|d|\geq2$, the GL energy has critical points of the form $\ue(r e^{\im\theta})=f_\varepsilon(r)\,e^{\im d\theta}$, and these solutions are not
  minimizing for sufficiently small $\varepsilon$  \cite{bbh94}.  Non
  minimizing critical points also exist when $d=0$:   F.H.~Lin~\cite{lin95}
  constructed examples of 
\enquote{mixed vortex-antivortex solutions}. More specifically, 
for all $ N\geq 1$ there exists $g_N:\partial\Omega\to \so$ of
degree $0$ and non minimizing corresponding critical points $u_{\varepsilon_n}$ such that
\begin{equation*}
 u_{\varepsilon_n}\to u_*=e^{\im\eta_N(z)}\, \Prod_{j=1}^{2N}\left(\frac{z-a_{j, N}}{|z-a_{j, N}|}\right)^{(-1)^{j-1}}.
\end{equation*}

 Other existence results concerning non minimizing solutions for the 
 the GL energy were proved
 by Almeida and Bethuel~\cite{almeida-beth98} and  by F.~Zhou and
 Q.~Zhou~\cite{zhou-qing99}, using variational and topological methods. We believe that at least some of these
 methods  lead to the existence of non minimizing critical points of \eqref{eq:EL} for a general $W$,   but we did not investigate
 this issue.

Except for the upper bound \eqref{eq:118}, we did not establish a more
precise estimate for the energy $E_\varepsilon(\ue)$. In the case of
the GL-energy, Comte and Mironescu \cite{cm-rem} proved  that the
following is true:
\begin{equation}
  \label{eq:119}
  E_\varepsilon(\ue)=\pi\left(\sum_{j=1}^N D_j^2\right)|\log\ve|+O(1).
\end{equation}
It would be interesting to generalize the validity of \eqref{eq:119}
to our setting.

\medskip
 The paper is organized as follows. In \rsec{sec:prelim} we introduce some
notation and prove a maximum principle for the phase, that plays an
important role in the remaining part of the paper. In 
\rsec{sec:no-vortices} we study the case of boundary data of zero
degree ($d=0$) under the additional assumption that the solutions stay
close to $\Gamma$, i.e., no vortices appear. The techniques of this
section are used in \rsec{sec:dep-eps} to treat the more general case
of a  boundary data
 depending on $\varepsilon$ (again, for vortex-less solutions). This
 latter case is very useful in the proof of convergence away from the
 vortices in
 \rth{th:main}. \rsec{sec:nonzerodeg} is devoted to the proof
 of the main result, \rth{th:main}.

\subsubsection*{Acknowledgments.} The first author (PM) was partially  supported  by the LABEX MILYON (ANR-10-LABX-0070) of Universit\'e de Lyon,
within the program \enquote{Investissements d'Avenir} (ANR-11-IDEX-0007) operated
by the French National Research Agency (ANR). The second author (IS)
was supported by  the Israel Science Foundation (Grant No. 999/13). Part of this work was done while  IS was visiting the University Claude Bernard Lyon 1. He thanks the Mathematics Department for its hospitality.

\tableofcontents
\section{Preliminaries}
\label{sec:prelim}
 \subsection{Coordinates and Euler-Lagrange equations}
 Consider $\Gamma_{\delta}:=\{z\in \R^2;\,\text{dist}(z,\Gamma)<\delta\}$. For sufficiently small $\delta_\Gamma$ (depending on $\Gamma$)  the Euclidean nearest point projection $\Pi$ on $\Gamma$ is well-defined and  smooth in $\Gamma_{\delta_\Gamma}$  (see e.g. \cite[Sec.~14.6]{gitr01}). 
 
 Assume in what follows that $u:\omega\to\R^2$ is a smooth map such that 
 \be
 \l{h9}
 u(x)\in \Gamma_{\delta_\Gamma},\ \fo x\in\omega.
 \ee
 (Here, $\omega\subset\R^2$ is some open set.) 
Locally in $\omega$, we can associate to $u$ two smooth coordinates, $t$ and $\va$, such that $\Pi\circ u=\tau \left(e^{\im\va}\right)$ and $t$ is the signed distance of $u$ to $\Gamma$ (taken with the plus sign inside $\Gamma$). Analytically, this means that the functions $t$ and $\va$ satisfy
$(t(x),\va(x))\in(-\delta_\Gamma,\delta_\Gamma)\times\R$ and
 \begin{equation}
 \label{eq:utotphi}
 u(x)=\tau\left(e^{\im\va (x)}\right)+t(x)\,\vec{n}\left(\tau\left(e^{\im\va (x)}\right)\right).
\end{equation}
 Here, $\vec{n}(z)$ denotes the inward  unit normal to $\Gamma$ at
 the point $z\in\Gamma$. 
 
 Equivalently, we have 
 \be
 \l{ha1}
 \Pi (u(x))=\tau\left(e^{\im\va (x)}\right)\text{ and }t(x)= (u(x)-\Pi (u(x)))\cdot \vec n\, (\Pi(u(x))).
 \ee

 Note that $t$ is globally defined, but $\va$ is  only {\em locally} defined in $\omega$, and that $\va$  is (locally) unique mod $2\pi$. It is useful to note that $\va$ is {\it globally} defined when $\omega$ is simply connected.
 
 A simple calculation (see \cite[Lemma 4.1]{as03}) shows that for $u$ satisfying \eqref{h9} we have (denoting by $\kappa(z)$ the curvature of $\Gamma$ at the point $z\in\Gamma$)
 \begin{equation}
 \l{ha2}
|\nabla u|^2=\left(1-t\,\kappa\left(\tau(e^{\im\va})\right)\right)^2|\nabla\va|^2+|\nabla t|^2=\left(1-t\,\kappa\left(\Pi\circ u\right)\right)^2|\nabla\va|^2+|\nabla t|^2.
\end{equation}
Moreover, for such $u$ we have (using \eqref{eq:117}) that
\be
\l{ha3}
W(u)=\alpha(\va,t)\, t^2
\end{equation}
 where $\alpha(\va,t)$ is a smooth positive  function,
 $2\pi$-periodic in the $\va$-variable.

 Assume next that $u=\ue$ is a solution of \eqref{eq:EL} in $\O$ and that $\omega\subset\O$ is such that \eqref{h9} holds.  Then locally in $\omega$ we
 may use \eqref{ha3} to write the  Euler-Lagrange equations \eqref{eq:EL} for the function $u$ in the new coordinates $t$ and $\va$ as  follows.
\begin{subequations} 
\label{eq:ELN}
\begin{gather}
-\div (a\nabla\va)=b|\nabla\va|^2-\frac{\alpha_\va t^2}{\ve^2}\label{eq:ELN-phi},\\
-\Delta t+(2\alpha+\alpha_t t)\frac{t}{\varepsilon^2}=c|\nabla\varphi|^2.\label{eq:ELN-t}
\end{gather}
\end{subequations}

In \eqref{eq:ELN}, the coefficients $a=a(\va, t)$, $b=b(\va, t)$ and $c=c(\va, t)$ are given by
\begin{equation}
\label{eq:abc}
\left\{
\begin{aligned}
&a=\left(1-t\kappa\left(\tau(e^{\im \va})\right)\right)^2=1+O(t)\\
&b=-\frac{1}{2}a_\va=O(t)\\
&c=-\frac{1}{2}a_t=O(1)
\end{aligned}.
\right.
\end{equation}

\subsection{A maximum principle for the phase}
 By \eqref{ha3}--\eqref{eq:abc}, for sufficiently small $\delta_0\in(0,\delta_\Gamma)$
 there exist positive
 constants $c_0,\ldots,c_5$ such that for
 $|t|\leq \delta_0$ there holds:
\begin{subequations}
\label{eq:1}
 \begin{gather}
|1-a|\leq c_0|t|,\label{eq:1-0}\\
 2\alpha-|\alpha_t t|\geq c_1,\label{eq:1-1}\\
|c|\leq c_2,\label{eq:1-2}\\
\left|\frac{b}{a}\right|\leq c_3|t|,\label{eq:1-3}\\
\left|\frac{a_t}{a}\right|\leq 2c_4,\label{eq:1-4}\\
\left|\frac{\alpha_\va}{a}\right|\leq c_5.\label{eq:1-5}
\end{gather}
\end{subequations}

Note that $\delta_0$ depends only on $\Gamma$.

Next we prove a maximum principle for the phase $\varphi$, that
will be useful throughout the paper. For this purpose, we introduce two numbers,  $0<\delta_1<\delta_0$ and
$m>0$,  satisfying  
\begin{equation}
  \label{eq:2}
  \frac{c_5}{c_1}\leq m
\end{equation}
 and
 \begin{equation}
   \label{eq:11}2c_4\delta_1+m(mc_2+c_3)\delta_1^3<1.
 \end{equation}
 
 Note that $\delta_1$ and $m$ depend only on  $\Gamma$ and $W$.

\begin{proposition}
\label{prop:max}
Let
 $u=u_\ve$ be a critical point of $E_\ve$ in  a   bounded simply connected domain $\omega$,  continuous on $\overline \omega$ and
satisfying $\dist (u(x), \Gamma)\le \delta_1$, $\fo x\in\overline \omega$. Consider 
$t=t_\ve,\varphi=\varphi_\varepsilon$ associated to $u$ via \eqref{eq:utotphi}. Then
\begin{subequations}
\label{eq:max-pr}
\begin{gather}
\min_{\overline \omega} \left(\va-\frac{mt^2}{2}\right)=\min_{\partial \omega} \left(\va-\frac{mt^2}{2}\right),\label{eq:max}\\
\max_{\overline \omega} \left(\va+\frac{mt^2}{2}\right)=\max_{\partial \omega} \left(\va+\frac{mt^2}{2}\right).\label{eq:min}
\end{gather}
\end{subequations}
\end{proposition}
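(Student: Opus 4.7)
The strategy is to derive a quasilinear elliptic differential inequality for $\psi_-:=\va-mt^2/2$ (respectively $\psi_+:=\va+mt^2/2$) of the form
\[
\Delta\psi_\pm + \vec A\cdot\na\psi_\pm + B|\na\psi_\pm|^2\lessgtr 0\quad\text{in }\omega,
\]
with bounded coefficients $\vec A$ and $B$, and then to apply the weak minimum (resp.\ maximum) principle for linear elliptic operators in non-divergence form. I focus on \eqref{eq:max}; the proof of \eqref{eq:min} is analogous, with $m$ replaced by $-m$.

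The first step is to compute $\Delta\psi_-=\Delta\va-m|\na t|^2-mt\Delta t$. Using $\na a=a_t\na t+a_\va\na\va$ together with the identity $a_\va=-2b$ implicit in \eqref{eq:abc}, equation \eqref{eq:ELN-phi} can be rewritten as
\[
\Delta\va=-\frac{a_t}{a}\,\na t\cdot\na\va+\frac{b}{a}|\na\va|^2+\frac{\alpha_\va t^2}{a\ve^2}.
\]
Substituting $\na\va=\na\psi_-+mt\,\na t$, expanding $|\na\va|^2=|\na\psi_-|^2+2mt\,\na\psi_-\cdot\na t+m^2t^2|\na t|^2$, and inserting \eqref{eq:ELN-t} for $\Delta t$, after collecting terms by order in $\na\psi_-$, I expect the identity
\[
\Delta\psi_--\vec A\cdot\na\psi_--B|\na\psi_-|^2=-m|\na t|^2\Bigl(1+\frac{ta_t}{a}-\frac{mbt^2}{a}-cm^2t^3\Bigr)+\frac{t^2}{\ve^2}\Bigl(\frac{\alpha_\va}{a}-m(2\alpha+\alpha_t t)\Bigr),
\]
where $\vec A$ and $B$ depend smoothly on $(t,\va)$ and on $\na t$, and are in particular bounded on $\ov\omega$.

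The heart of the argument is then to verify that the right-hand side is non-positive. For the $|\na t|^2$-term, the bounds \eqref{eq:1-2}, \eqref{eq:1-3}, \eqref{eq:1-4} together with $|t|\le\delta_1$ give
\[
1+\frac{ta_t}{a}-\frac{mbt^2}{a}-cm^2t^3\ge 1-\bigl(2c_4\delta_1+m(mc_2+c_3)\delta_1^3\bigr)>0
\]
precisely by \eqref{eq:11}; for the $t^2/\ve^2$-term, \eqref{eq:1-1} and \eqref{eq:1-5} yield
\[
\frac{\alpha_\va}{a}-m(2\alpha+\alpha_t t)\le c_5-mc_1\le 0
\]
by the choice \eqref{eq:2} of $m$. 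Hence $\psi_-$ satisfies the linear supersolution inequality $\Delta\psi_--(\vec A+B\na\psi_-)\cdot\na\psi_-\le 0$ in $\omega$ with bounded drift, and the weak minimum principle forces $\psi_-$ to attain its minimum on $\p\omega$, giving \eqref{eq:max}.

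The main obstacle is the algebraic bookkeeping in the middle step: keeping careful track of every cross term produced by expanding $|\na\va|^2$ after the substitution, and recognizing that the constants $m$ and $\delta_1$, fixed in advance by \eqref{eq:2} and \eqref{eq:11}, are precisely tailored so that both parenthetical factors on the right-hand side of the key identity come out with the correct sign. Once this identity is in place, rewriting it as a supersolution of a linear operator with $L^\infty$ drift and invoking the weak minimum principle is standard.
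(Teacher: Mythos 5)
Your computation is correct and the core idea — exploiting the same algebraic structure, with \eqref{eq:2} controlling the $t^2/\ve^2$ term and \eqref{eq:11} controlling the $|\nabla t|^2$ term — is exactly what drives the paper's proof. But your organization of the argument is genuinely different, and the two approaches are worth comparing.

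The paper works with $v:=mt^2/2-\va$, derives the inequality \eqref{eq:8} for $-\Delta v - k|\nabla v|^2$ with a free constant $k>0$, and checks nonpositivity of the right-hand side by computing the discriminant of a quadratic form in $(|\nabla\va|,|\nabla t|)$. This yields $\Delta(e^{kv})\ge 0$ for $k$ large, and the conclusion follows from the classical maximum principle for subharmonic functions continuous up to the boundary — a pointwise statement with no coefficients to worry about. You instead substitute $\nabla\va=\nabla\psi_-+mt\,\nabla t$ and collect terms to obtain an \emph{exact} quasilinear identity, with no loss from Cauchy–Schwarz. That is algebraically slicker: it reveals that the cross terms cancel outright rather than being dominated, and the sign conditions on $m$ and $\delta_1$ show up transparently in the two parenthetical factors.

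However, your concluding step has a real (though fixable) gap. You assert that $\vec A$ and $B$ are ``bounded on $\overline\omega$'' and then invoke the weak minimum principle for a linear operator with bounded drift $\vec A+B\nabla\psi_-$. But $\vec A$ is a scalar multiple of $\nabla t$, and the drift also carries the factor $\nabla\psi_-$; the hypotheses of Proposition~\ref{prop:max} only give $u\in C(\overline\omega)$, not $C^1(\overline\omega)$, so neither $\nabla t$ nor $\nabla\psi_-$ is bounded on $\overline\omega$ a priori. The coefficients are only \emph{locally} bounded in $\omega$. To make the argument rigorous you would need to either (i) exhaust $\omega$ by subdomains $\omega_n\Subset\omega$ on which the coefficients are bounded, apply the minimum principle there, and pass to the limit using the continuity of $\psi_-$ on $\overline\omega$; or (ii) convert your quasilinear inequality into a subharmonicity statement for an exponential of $\psi_-$, which is precisely the paper's device and avoids the issue altogether. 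So your algebraic route works and is arguably more illuminating, but the paper's exponential transformation is not merely cosmetic — it is what lets the weak regularity hypothesis ``$u$ continuous on $\overline\omega$'' suffice without any extra exhaustion argument.
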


\begin{corollary}
\label{rem:max}
If, in addition to the hypotheses of Proposition  \ref{prop:max}, we have $u_\ve(\partial \omega)\subseteq \Gamma$, then
\be
\l{ha4}
\min_{\partial \omega}\va\leq \va(x)-\frac{mt^2(x)}{2}\leq
\va(x)+\frac{mt^2(x)}{2} \leq \max_{\partial \omega}\va,\quad\fo x\in \omega.
\ee
In particular, 
\begin{equation}
\label{eq:3}
  \min_{\partial \omega}\va\leq\va(x)\leq
    \max_{\partial \omega}\va,\ \forall\, x\in\overline \omega.
\end{equation}
\end{corollary}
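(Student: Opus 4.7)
The plan is to derive this corollary as a direct consequence of \rprop{prop:max}, using the extra boundary hypothesis to trivialize the quadratic correction on $\partial\omega$.

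First, I observe that the assumption $u_\varepsilon(\partial\omega)\subseteq\Gamma$ is exactly the statement that $\dist(u_\varepsilon(x),\Gamma)=0$ for every $x\in\partial\omega$. Since $t$ is the signed distance to $\Gamma$ (see \eqref{ha1}), this means $t\equiv 0$ on $\partial\omega$. Consequently, on $\partial\omega$ the three quantities
\begin{equation*}
\varphi-\frac{mt^2}{2},\qquad \varphi,\qquad \varphi+\frac{mt^2}{2}
\end{equation*}
all coincide.

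Next, I would apply \rprop{prop:max}. From \eqref{eq:max} and the identification above,
\begin{equation*}
\min_{\partial\omega}\varphi=\min_{\partial\omega}\!\left(\varphi-\frac{mt^2}{2}\right)=\min_{\overline\omega}\!\left(\varphi-\frac{mt^2}{2}\right)\leq \varphi(x)-\frac{mt^2(x)}{2}\quad\text{for every }x\in\overline\omega,
\end{equation*}
and symmetrically, using \eqref{eq:min},
\begin{equation*}
\varphi(x)+\frac{mt^2(x)}{2}\leq \max_{\overline\omega}\!\left(\varphi+\frac{mt^2}{2}\right)=\max_{\partial\omega}\!\left(\varphi+\frac{mt^2}{2}\right)=\max_{\partial\omega}\varphi.
\end{equation*}
Since $mt^2(x)/2\geq 0$, the middle inequality $\varphi(x)-mt^2(x)/2\leq \varphi(x)+mt^2(x)/2$ is automatic, so chaining the three bounds yields \eqref{ha4}.

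Finally, for \eqref{eq:3} I would note that dropping the nonnegative correction $mt^2/2$ on either side of \eqref{ha4} gives $\min_{\partial\omega}\varphi\leq\varphi(x)\leq\max_{\partial\omega}\varphi$ for $x\in\omega$, while for $x\in\partial\omega$ the bound is immediate from $t(x)=0$. There is no genuine obstacle here; all the real work is hidden in \rprop{prop:max}, and the role of the hypothesis $u_\varepsilon(\partial\omega)\subseteq\Gamma$ is just to kill the $t^2$ term on the boundary so that the \emph{modified} max/min principle becomes an ordinary max/min principle for $\varphi$ itself.
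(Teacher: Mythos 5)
Your proof is correct and is exactly the argument the paper leaves implicit: the hypothesis $u_\varepsilon(\partial\omega)\subseteq\Gamma$ forces $t=0$ on $\partial\omega$, so the boundary values of $\varphi\pm mt^2/2$ reduce to those of $\varphi$, and the two-sided bound in \eqref{ha4} then follows immediately from the min/max identities \eqref{eq:max}--\eqref{eq:min} of Proposition \ref{prop:max} together with $m>0$. The paper states the corollary without proof precisely because this reduction is as direct as you describe.
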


\begin{proof}[Proof of Proposition  \ref{prop:max}]
  First, we may rewrite the equation \eqref{eq:ELN-phi} as
  \begin{equation*}
    -\Delta\varphi=\frac{1}{a}\nabla a\cdot\nabla\varphi +\frac{b}{a}|\nabla\varphi|^2-\frac{\alpha_\varphi t^2}{a\ve^2}.
  \end{equation*}
 Using 
 \begin{equation*}
    \nabla a\cdot\nabla\varphi=a_\varphi |\nabla\varphi|^2+a_t\nabla\varphi\cdot\nabla t=-2b|\nabla\varphi|^2+a_t\nabla\varphi \cdot\nabla t
 \end{equation*}
 yields
  \begin{equation}
    \label{eq:4}
  -\Delta\varphi=-\frac{b}{a}|\nabla\varphi|^2+\frac{a_t}{a}\nabla\varphi\cdot\nabla t-\frac{\alpha_\varphi}{a}\frac{t^2}{\ve^2}.
  \end{equation}
From \eqref{eq:ELN-t} we deduce
\begin{equation}
  \label{eq:5}
-\Delta\left(\frac{t^2}{2}\right)=-|\nabla t|^2-t\Delta t=-|\nabla t|^2+ct|\nabla\varphi|^2-(2\alpha+\alpha_t t)\frac{t^2}{\ve^2}.
\end{equation}
 Combining \eqref{eq:4}--\eqref{eq:5} and invoking \eqref{eq:1} gives
 \be
   \label{eq:6}
   \begin{aligned}
 -\Delta\left(\frac{mt^2}{2}-\varphi\right)\leq
& (m c_2+c_3) |t| |\na\va|^2 -m  |\na t|^2+2c_4|\nabla\varphi||\nabla t|
%\\
%&
+(c_5-m c_1)\frac{t^2}{\ve^2}.
 \end{aligned}
 \ee
We also have
\begin{equation}
  \label{eq:7}
  \begin{aligned}
  \left|\nabla\left(\frac{mt^2}{2}-\varphi\right)\right|^2=&m^2t^2|\nabla t|^2-2mt\nabla\varphi\cdot \nabla t+|\nabla\varphi|^2
  %\\
  \ge 
  %&
  m^2t^2|\nabla t|^2-2m |t| |\nabla\varphi| |\nabla t|+|\nabla\varphi|^2.
  \end{aligned}
\end{equation}
 By \eqref{eq:6}--\eqref{eq:7} we obtain, for any $k>0$,
 \be
    \label{eq:8}
 \begin{aligned}
   -\Delta\left(\frac{mt^2}{2}-\varphi\right)-k \left|\nabla\left(\frac{mt^2}{2}-\varphi\right)\right|^2\leq&
   \left(mc_2|t|+c_3|t|-k\right)|\nabla\varphi|^2-\left(m+km^2t^2\right)|\nabla t|^2\\
&+\left(2c_4+2km|t|\right)|\nabla\varphi||\nabla t|+\left(c_5-mc_1\right)\frac{t^2}{\ve^2}.
 \end{aligned}
 \ee
 
Next we are looking for conditions that will insure that the right-hand side of
\eqref{eq:8} is nonpositive. First, by our assumption \eqref{eq:2}
 the last term is indeed nonpositive. The sum
of the first three terms on the right-hand side of \eqref{eq:8} is a quadratic
form in the two variables $|\nabla\varphi|,|\nabla t|$ whose discriminant $\Delta$ is given
by 
\be
\label{eq:10}
\begin{aligned}
  \Delta/4=&\left(c_4+km|t|\right)^2-\left(k-mc_2|t|-c_3|t|\right)m\left(1+kmt^2\right)\\
=&c_4^2+m(mc_2+c_3)|t|-km\left(1-2c_4|t|-m(mc_2+c_3)|t|^3\right).
\end{aligned}
\ee

By \eqref{eq:11} and \eqref{eq:10} it follows that for sufficiently large $k$ we have $\Delta\leq0$, implying that the right-hand side of \eqref{eq:8} is 
nonpositive. For such $k$ it follows that  the function 
 $v:=mt^2/2-\varphi$ satisfies
 \begin{equation*}
   \Delta(e^{kv})=ke^{kv}(\Delta v+k|\nabla v|^2)\geq 0 \text{ in }\omega.  
 \end{equation*}
  By the maximum principle, $\max_{\overline \omega} v=\max_{\partial \omega} v$, which is equivalent to
 \eqref{eq:min}.  
 
 By similar calculations, the function  
 $w:=mt^2/2+\varphi$ satisfies   $\Delta(e^{kw})\geq 0$, implying \eqref{eq:max}.
\end{proof}
\section{Asymptotic behavior of  solutions without vortices}
\label{sec:no-vortices}
In this section we shall study the asymptotic behavior of solutions   
${\ue}$ of \eqref{eq:EL} in a smooth bounded simply
connected domain $\Omega$ in $\R^2$. We assume a priori that the solutions   
are vortex-less. Actually, we shall assume a stronger condition,
namely that the
solutions are \enquote{sufficiently close}   to $\Gamma$, in a sense to be precised
below (see \eqref{eq:34}). 
 We are given a smooth boundary condition $g:\partial\Omega\to\Gamma$ of degree zero and
 a family of solutions $\{\ue\}$ of \eqref{eq:EL}. Since $g$ is of degree zero, we may globally write it as  $g=\tau(e^{\im\varphi_0})$ for some smooth $\va_0:\po\to\R$. 
 
 We next assume that 
 \begin{equation}
\label{eq:34}
  \dist(\ue (x), \Gamma)\leq\delta_1, \ \fo x\in\Omega,
\end{equation}
where $\delta_1$ is chosen to satisfy the hypotheses of \rprop{prop:max}.

Then we may write, globally in $\overline\O$ and with smooth $t_\ve$ and $\va_\ve$, 
\begin{equation}
\label{eq:36}
  \ue(x)=\tau\left(e^{\im\va_\ve(x)}\right)+t_\ve(x)\,\vec{n}\, \left(\tau\left(e^{\im \va_\ve(x)}\right)\right),\ \fo x\in\overline\O.
\end{equation}
 
Let $\zeta$ denote the harmonic extension of $\varphi_0$ to $\Omega$ and define the
$\Gamma$-valued map $u_0$ by
 \begin{equation}
\label{eq:38}
   u_0:=\tau\left(e^{\im\zeta}\right).
 \end{equation}
The main result of this section establishes, in the spirit of \cite{bbh93}, a convergence result of
$u_\varepsilon$ to the limit $u_0$. 
\begin{theorem}
  \label{th:conv-degz}
Let, for $0<\ve<\ve_0$, $\ue$ denote a solution of
\eqref{eq:EL} satisfying \eqref{eq:34}. 
Then we  have
 \begin{gather}
\ue\to u_0 \text{ in }C^{1,\alpha}(\overline \Omega)\text{ as }\ve\to 0,\, \forall\, \alpha<1, \label{eq:42-1}\\
\|\Delta\ue\|_\infty\leq C,\label{eq:42-2}\\
\|u_\varepsilon-u_0\|_\infty\leq C\varepsilon^2,\label{eq:42-3}\\
\|\nabla(u_\varepsilon-u_0)\|_\infty\leq C\varepsilon\label{eq:42-4}.
 \end{gather}
\end{theorem}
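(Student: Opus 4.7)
Since $g=\tau(e^{\im\va_0})$ has degree zero and $\Omega$ is simply connected, the coordinates $(\va_\ve, t_\ve)$ of \eqref{eq:utotphi} extend globally to $\overline\Omega$, with boundary data $t_\ve|_{\po}=0$ and $\va_\ve|_{\po}=\va_0$. The strategy is to run a bootstrap on the coupled elliptic system \eqref{eq:ELN-phi}--\eqref{eq:ELN-t}, transferring estimates between $\va_\ve$ and $t_\ve$ until one reaches the sharp rates in \eqref{eq:42-2}--\eqref{eq:42-4}. A starting point is furnished by \rcor{rem:max}, which gives at once the uniform bound $\|\va_\ve\|_\infty\le\max_{\po}\va_0$, and consequently $\|u_\ve\|_\infty\le C$.

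For the energy bound, the plan is first to multiply \eqref{eq:ELN-t} by $t_\ve$ and integrate: using $2\alpha+\alpha_t t_\ve\ge c_1$ from \eqref{eq:1-1}, $|c|\le c_2$ from \eqref{eq:1-2}, and $t_\ve|_{\po}=0$, this yields
\be
\int_\Omega|\na t_\ve|^2+\frac{c_1}{\ve^2}\int_\Omega t_\ve^2\le c_2\delta_1\int_\Omega|\na\va_\ve|^2.
\ee
Letting $\zeta$ be the harmonic extension of $\va_0$, I would then test \eqref{eq:4} against $\va_\ve-\zeta$ and exploit \eqref{eq:1-3}--\eqref{eq:1-5} to control $b/a$, $a_t/a$, $\alpha_\va/a$; because all three bounds carry a prefactor that vanishes with $\delta_1$, choosing $\delta_1$ small enough allows one to absorb the nonlinear contributions into the energy and, after combining with the inequality above, to obtain
\be
\int_\Omega|\na\va_\ve|^2+\int_\Omega\frac{t_\ve^2}{\ve^2}\le C.
\ee

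A standard $L^p$-elliptic bootstrap applied to \eqref{eq:ELN-phi} then upgrades this to $\|\va_\ve\|_{W^{2,p}}\le C$ for every $p<\infty$, and hence $\|\na\va_\ve\|_\infty\le C$. With this $L^\infty$ bound on $\na\va_\ve$ at hand, \eqref{eq:ELN-t} becomes a scalar elliptic equation with coefficient $\ge c_1/\ve^2$ and bounded right-hand side, so comparison with the constant supersolution $M\ve^2/c_1$ gives $\|t_\ve\|_\infty\le C\ve^2$. Since $\na W(u_\ve)=O(t_\ve)$, this is exactly \eqref{eq:42-2}. A boundary-layer analysis of the same singularly-perturbed equation for $t_\ve$ then gives $\|\na t_\ve\|_\infty\le C\ve$; plugging both bounds into \eqref{eq:4} and invoking Schauder estimates on $\va_\ve-\zeta$ produces $\|\va_\ve-\zeta\|_\infty=O(\ve^2)$ and $\|\na(\va_\ve-\zeta)\|_\infty=O(\ve)$, from which \eqref{eq:42-3} and \eqref{eq:42-4} follow. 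The $C^{1,\alpha}$ convergence \eqref{eq:42-1} then comes by interpolating the uniform $C^{1,\beta}$ bound implied by \eqref{eq:42-2} with the pointwise convergence already obtained.

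The main obstacle is the coupling of the two equations. The nonlinear cross-terms in \eqref{eq:ELN} (for example $b|\na\va_\ve|^2$, $\alpha_\va t_\ve^2/\ve^2$, $c|\na\va_\ve|^2$) carry prefactors that vanish only at the rate $\delta_1$, so the whole argument rests on choosing $\delta_1$ small enough—depending only on $\Gamma$ and $W$ via \eqref{eq:1} and \eqref{eq:2}--\eqref{eq:11}—to make these absorptions work at each step. An equally delicate point is the order of the bootstrap: the comparison step that yields $\|t_\ve\|_\infty=O(\ve^2)$ already requires an $L^\infty$ bound on $\na\va_\ve$, which itself must first be obtained from the energy estimate together with elliptic regularity on the $\va$-equation; reversing this order would produce a circular dependence between the two rates.
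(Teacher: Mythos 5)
Your outline tracks the broad structure of the paper's proof, but it contains a genuine gap at the very first energy estimate, and it also glosses over the hardest technical step.

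The claim that one may ``choose $\delta_1$ small enough'' to absorb the nonlinear cross-terms is not available. The constant $\delta_1$ in \eqref{eq:34} is fixed a priori by \eqref{eq:2} and \eqref{eq:11}, and the theorem is stated for solutions satisfying \eqref{eq:34} with that specific $\delta_1$; shrinking $\delta_1$ would strengthen the hypothesis and prove a weaker theorem. Moreover, two of the three prefactors in \eqref{eq:1-3}--\eqref{eq:1-5} (namely $|a_t/a|\le 2c_4$ and $|\alpha_\va/a|\le c_5$) do \emph{not} vanish with $\delta_1$, so even granting the freedom to shrink $\delta_1$ your absorption would require a more careful coupling argument than stated. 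What the paper actually does is prove \rlemma{lem:limt}, i.e.\ that $\|t_\ve\|_\infty\to 0$ \emph{without} assuming $\delta_1$ small, via a blow-up/Liouville argument (rescale at a point where $|t_{\ve_n}(x_n)|\ge c>0$, pass to an entire or half-plane solution, and use the maximum principle for $e^{k\widetilde v}$, $e^{k\widetilde w}$ to force $\widetilde t\equiv$ const, contradicting \eqref{eq:ELN-t}). This lemma, which your proposal never mentions, is precisely what supplies the vanishing factor $\|t_\ve\|_\infty=o(1)$ that drives the energy estimate $\int|\na\psi_\ve|^2=o(1)$ in \rprop{prop:H1}; ``$\delta_1$ small'' cannot replace it.

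Your second main step, ``a standard $L^p$-elliptic bootstrap applied to \eqref{eq:ELN-phi} upgrades the energy bound to $\|\va_\ve\|_{W^{2,p}}\le C$ for all $p<\infty$,'' is also an overstatement. The right-hand side of \eqref{eq:ELN-phi} contains $b|\na\va|^2$ with $\na\va$ only in $L^2$, so in 2D the source term is merely $L^1$ and a standard bootstrap stalls at $W^{1,2}$. The paper's route is considerably more intricate: the splitting $\psi=\psi_1+\psi_2+\psi_3$ in \rlemma{lem:w1.4}, combined with the interpolation $\|f\|_{2q}^2\le\|f\|_2\|f\|_p$ \emph{and} the crucial smallness $\|\na\psi\|_2=o(1)$ (again, a consequence of \rlemma{lem:limt}), closes a bootstrap to $W^{1,4}$; this is then upgraded to $H^2$ in \rlemma{lem:H2}, and finally the Moser-type iteration on $|t|^{q-2}t/\varepsilon^{2(q-1)}$ in \rprop{prop:t-eps2} delivers $\|t_\ve/\ve^2\|_\infty\le C$ and $\|\na\va_\ve\|_\infty\le C$. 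Your comparison-with-constant-supersolution step is essentially equivalent to the paper's Moser argument \emph{once} $\|\na\va_\ve\|_\infty\le C$ is known, but the real work is getting to that bound, and a ``standard bootstrap'' does not do it.

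The remaining steps of your outline — interpolation of $\|\na t_\ve\|_\infty$ between $\|t_\ve\|_\infty$ and $\|\Delta t_\ve\|_\infty$, Schauder on the $\psi$-equation with $O(\ve^2)$ source, and interpolation for $C^{1,\alpha}$ convergence — match the paper and are fine.
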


Theorem \ref{th:conv-degz} is an immediate consequence of several intermediate estimates (Lemma \ref{lem:infty} to Proposition \ref{prop:t-eps2}) that we now state and prove.

We start with two simple estimates satisfied by the solutions. These estimates are valid in any bounded domain $\O$ provided $|\ue|\le R_0$ on $\p\O$.
 \begin{lemma}
   \label{lem:infty}
  We have 
 \begin{equation}
\label{eq:linfty}
\| u_\varepsilon \|_{L^\infty(\Omega)}\leq R_0,
\end{equation}
 where $R_0$ is given by \eqref{h5}.
 \end{lemma}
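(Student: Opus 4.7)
The plan is to apply the maximum principle to the scalar function $f:=|u_\varepsilon|^2$, exploiting the radial monotonicity assumption \eqref{h5} on $W$.

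First I would take the Laplacian of $f$ and substitute the Euler--Lagrange equation \eqref{eq:EL}, obtaining
\begin{equation*}
\Delta f = 2|\nabla u_\varepsilon|^2 + 2u_\varepsilon\cdot\Delta u_\varepsilon = 2|\nabla u_\varepsilon|^2 + \frac{2}{\varepsilon^2}\,u_\varepsilon\cdot\nabla W(u_\varepsilon).
\end{equation*}
The key pointwise identity is that for any $z\in\R^2$ with $|z|=r$ one has $z\cdot\nabla W(z)=r\,\partial_r W(z)$. Hence on the open set
\begin{equation*}
A := \{x\in\Omega : |u_\varepsilon(x)|>R_0\}
\end{equation*}
the hypothesis \eqref{h5} yields $u_\varepsilon\cdot\nabla W(u_\varepsilon)\ge0$, so $\Delta f\ge 0$ on $A$; that is, $f$ is subharmonic on $A$.

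Next I would examine the boundary of $A$. Since $u_\varepsilon=g$ takes values in $\Gamma$ on $\partial\Omega$ and $R_0>\max\{|z|:z\in\Gamma\}$, we have $|u_\varepsilon|\le R_0$ on $\partial\Omega$ (this is precisely the hypothesis recorded in the sentence preceding the lemma). Consequently $A\cap\partial\Omega=\emptyset$, so $A$ is an open subset of $\Omega$ with $\overline A\subset\overline\Omega$, and by continuity of $u_\varepsilon$ one has $|u_\varepsilon|=R_0$, hence $f=R_0^2$, on the interior portion $\partial A\cap\Omega$ of its boundary; on the remaining part $\partial A\cap\partial\Omega$ one has $f\le R_0^2$ by the same boundary consideration. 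Thus $f\le R_0^2$ on $\partial A$.

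Finally, assuming for contradiction that $A\ne\emptyset$, the weak maximum principle for the bounded subharmonic function $f$ on the bounded open set $A$ gives $\sup_{\overline A}f\le\sup_{\partial A}f\le R_0^2$, contradicting $f>R_0^2$ on $A$. Hence $A=\emptyset$ and \eqref{eq:linfty} follows. The argument is almost entirely routine; the only point requiring a little care is the verification that the boundary of $A$ inside $\Omega$ sits on the level set $\{|u_\varepsilon|=R_0\}$, which is handled by continuity of $u_\varepsilon$.
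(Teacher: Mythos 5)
Your proof is correct and follows essentially the same route as the paper: apply the maximum principle for subharmonic functions to $|u_\varepsilon|^2$ on the set where $|u_\varepsilon|>R_0$, using the equation \eqref{eq:EL} together with the radial monotonicity \eqref{h5} to get subharmonicity there, and the boundary condition to control $\partial A$. Your boundary discussion is slightly more explicit than the paper's (which simply asserts $|u_\varepsilon|=R_0$ on $\partial E$, implicitly using that $R_0>\max_\Gamma|z|$ forces $\overline E\subset\Omega$), but the argument is the same.
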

 \begin{proof}
   We claim that 
   the set $E:=\{x\in\Omega;\,|\ue(x)|>R_0\}$ is empty. Indeed, this
   follows from the maximum principle for subharmonic functions since, on the one hand, we have  $|\ue|=R_0$ on $\p E$ and, on the other hand, $\ue$ satisfies in $E$
   \begin{equation*}
     \Delta (|\ue|^2)=2\left(|\nabla\ue|^2+\Delta\ue\cdot\ue)\right)\geq
     \frac{2}{\ve^2} \nabla W(\ue)\cdot\ue\geq0
   \end{equation*}
(the latter inequality following from \eqref{h5}).
 \end{proof}
 From \rlemma{lem:infty} we deduce the following gradient bound.  \begin{lemma}
   \label{lem:grad}
 We have for some constant $C$, 
 \begin{equation}
\label{eq:grad}
\| \nabla u_\ve \|_{L^\infty(\Omega)}\leq \frac{C}{\ve}.
\end{equation}
 \end{lemma}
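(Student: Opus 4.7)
The plan is a standard rescaling (blow-up) argument, exploiting the $L^\infty$ bound \eqref{eq:linfty} and elliptic regularity.

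\textbf{Main idea.} For an interior point $x_0\in\Omega$ with $B(x_0,\varepsilon)\subset\Omega$, define the rescaled map $v(y):=u_\varepsilon(x_0+\varepsilon y)$ on the unit ball $B(0,1)$. A direct computation using \eqref{eq:EL} shows that $v$ satisfies
\begin{equation*}
\Delta v=\nabla W(v)\quad\text{in }B(0,1),
\end{equation*}
i.e., the $\varepsilon$-dependence is absorbed by the scaling. By \rlemma{lem:infty} we have $|v|\leq R_0$ on $B(0,1)$, and since $W$ is smooth, $\nabla W(v)$ is bounded by a constant depending only on $R_0$ and $W$. Standard interior $W^{2,p}$ estimates (for any $p<\infty$) followed by Morrey's embedding yield $\|\nabla v\|_{L^\infty(B(0,1/2))}\leq C$ with $C$ independent of $\varepsilon$. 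Scaling back gives $|\nabla u_\varepsilon(x_0)|=\varepsilon^{-1}|\nabla v(0)|\leq C/\varepsilon$.

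\textbf{Near the boundary.} For $x_0\in\overline\Omega$ with $\dist(x_0,\partial\Omega)<\varepsilon$, pick $x_0'\in\partial\Omega$ nearest to $x_0$ and straighten the boundary near $x_0'$ by a smooth diffeomorphism $\Phi$ taking a neighborhood of $x_0'$ in $\overline\Omega$ to a half-ball in $\{y_2\geq 0\}$. In the new coordinates, $\tilde u_\varepsilon:=u_\varepsilon\circ\Phi^{-1}$ satisfies a second-order uniformly elliptic equation of the form $L\tilde u_\varepsilon=\varepsilon^{-2}\nabla W(\tilde u_\varepsilon)$ with smooth coefficients (depending only on $\Phi$), together with the smooth Dirichlet condition $\tilde u_\varepsilon=g\circ\Phi^{-1}$ on the flat piece. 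Performing the same $\varepsilon$-scaling $v(y):=\tilde u_\varepsilon(y_0+\varepsilon y)$ (where $y_0:=\Phi(x_0')$) produces a problem on a fixed half-ball whose right-hand side and boundary data (a rescaled smooth function, whose $C^{1,\alpha}$ norm after rescaling is controlled by $\|g\|_{C^{1,\alpha}}$) are uniformly bounded in $\varepsilon$. Applying boundary $W^{2,p}$ estimates up to the straight boundary yields $\|\nabla v\|_{L^\infty}\leq C$ in a neighborhood of the origin, and scaling back gives the same bound $|\nabla u_\varepsilon(x_0)|\leq C/\varepsilon$.

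\textbf{Conclusion and obstacle.} Combining the interior and boundary estimates covers all of $\overline\Omega$, yielding \eqref{eq:grad}. The only mildly delicate point is the boundary case: one must check that flattening and rescaling preserve the uniform ellipticity and produce bounded data, but since $\partial\Omega$ and $g$ are smooth and fixed independently of $\varepsilon$, this is routine. No genuine obstacle is expected; the argument is a textbook application of elliptic scaling to a semilinear problem with a bounded solution.
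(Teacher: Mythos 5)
Your argument is exactly the standard rescaling (blow-up) proof that the paper invokes by reference to~\cite{bbh93} and omits. The interior scaling plus boundary flattening, using the $L^\infty$ bound from \rlemma{lem:infty} and elliptic estimates, is precisely what is intended, so the proposal matches the paper's approach.
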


The
 proof of Lemma \ref{lem:grad} uses the same rescaling argument as in \cite{bbh93} and is
 therefore omitted.

\smallskip
Next we prove:
\begin{lemma}
\label{lem:limt}
  We have $\lim_{\varepsilon\to 0} t_\varepsilon=0$ uniformly on $\overline{\Omega}$.
\end{lemma}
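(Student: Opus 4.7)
The plan is to argue by contradiction using a blow-up (rescaling) argument combined with the maximum principle of \rprop{prop:max}. Suppose, contrary to the claim, that there exist $\eta_0>0$ and a sequence $\varepsilon_n\downarrow 0$ with $\|t_{\varepsilon_n}\|_{L^\infty(\Omega)}\geq\eta_0$. Since $t_{\varepsilon_n}\equiv 0$ on $\partial\Omega$, the maximum of $|t_{\varepsilon_n}|$ is attained at an interior point $x_n\in\Omega$. Extracting a subsequence, we may assume $t_{\varepsilon_n}(x_n)\geq\eta_0$ (the other sign is symmetric) and $x_n\to x_\infty\in\overline{\Omega}$.

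Set $\tilde u_n(y):=u_{\varepsilon_n}(x_n+\varepsilon_n y)$ on the rescaled domain $\Omega_n:=\varepsilon_n^{-1}(\Omega-x_n)$. Then $\tilde u_n$ solves $\Delta \tilde u_n=\nabla W(\tilde u_n)$, and by \rlemma{lem:infty}--\rlemma{lem:grad} it is uniformly bounded in $L^\infty$ with uniformly bounded gradient. Schauder estimates together with Arzel\`a--Ascoli yield, along a subsequence, $\tilde u_n\to u_*$ in $C^{2,\alpha}_{\mathrm{loc}}$, with $u_*$ solving $\Delta u_*=\nabla W(u_*)$ on a limit domain $\Omega_\infty$. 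This $\Omega_\infty$ is $\R^2$ when $\dist(x_n,\partial\Omega)/\varepsilon_n\to\infty$, and otherwise is (after a rigid motion) a half-plane. We write $u_*=\tau(e^{\im\varphi_*})+t_*\vec n(\tau(e^{\im\varphi_*}))$ with $|t_*|\leq\delta_1$, $t_*(0)\geq\eta_0$, and $\varphi_*$ globally bounded, the latter coming from \rcor{rem:max} applied to the $u_{\varepsilon_n}$'s, which gives $\varphi_{\varepsilon_n}\in[\min_{\partial\Omega}\varphi_0,\max_{\partial\Omega}\varphi_0]$ uniformly. In the half-plane case, the smoothness of $g$ forces the boundary data $g(x_n+\varepsilon_n\cdot)$ to converge locally uniformly to the constant $g(x_\infty)\in\Gamma$, so that $t_*\equiv 0$ and $\varphi_*\equiv\varphi_0(x_\infty)$ on $\partial\Omega_\infty$.

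The key point is that the proof of \rprop{prop:max} is $\varepsilon$-independent: the constants $m$ and $k$ depend only on $\Gamma$ and $W$, and the rescaled equation is the $\varepsilon=1$ version of \eqref{eq:ELN}. Therefore $v_*:=m t_*^2/2-\varphi_*$ and $w_*:=m t_*^2/2+\varphi_*$ satisfy $\Delta(e^{k v_*})\geq 0$ and $\Delta(e^{k w_*})\geq 0$ in $\Omega_\infty$, and $e^{k v_*}, e^{k w_*}$ are bounded. When $\Omega_\infty=\R^2$, Liouville's theorem for bounded subharmonic functions in the plane forces both to be constants; thus $\varphi_*$ is constant and $t_*^2$ is constant, so by continuity $t_*\equiv\eta_*>0$. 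Substituting into the rescaled \eqref{eq:ELN-t} (with $\nabla\varphi_*\equiv 0$) gives $(2\alpha(\varphi_*,\eta_*)+\alpha_t(\varphi_*,\eta_*)\eta_*)\,\eta_*=0$, contradicting \eqref{eq:1-1}. When $\Omega_\infty$ is a half-plane, the Phragm\'en--Lindel\"of principle for bounded subharmonic functions on a half-plane, combined with the constant boundary data computed above, yields $v_*\leq -\varphi_0(x_\infty)$ and $w_*\leq\varphi_0(x_\infty)$ throughout $\Omega_\infty$; adding these gives $m t_*^2\leq 0$, contradicting $t_*(0)\geq\eta_0$.

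The main obstacle is the half-plane case: one must check carefully that the limit problem truly inherits the (constant) Dirichlet boundary values as the rescaling concentrates near $x_\infty\in\partial\Omega$, and that the bounded subharmonic functions $e^{k v_*}, e^{k w_*}$ meet the growth hypothesis required for Phragm\'en--Lindel\"of, which boundedness supplies. The $\R^2$ case is cleaner and follows from Liouville, while the whole scheme rests on the fact that the $\varepsilon$-independent maximum principle of \rprop{prop:max} passes without change to the rescaled problem.
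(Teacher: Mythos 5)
Your proposal is correct and follows essentially the same approach as the paper's proof: both argue by contradiction via a blow-up around points where $|t_{\varepsilon_n}|$ stays bounded away from zero, split into the interior ($\R^2$) and boundary (half-plane) cases according to the ratio $\dist(x_n,\partial\Omega)/\varepsilon_n$, and then use that $e^{k\widetilde v}$, $e^{k\widetilde w}$ are bounded subharmonic functions on the limit domain (via the $\varepsilon$-independent constants of Proposition \ref{prop:max}) to conclude with Liouville on $\R^2$ and with the maximum principle (Phragm\'en--Lindel\"of) on the half-plane with constant boundary data. The only cosmetic difference is that you rescale around a maximizing point while the paper rescales around an arbitrary point where $|t_{\varepsilon_n}|$ is bounded below; this has no effect on the argument.
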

\begin{proof}
Arguing by contradiction, assume that for a subsequence $\varepsilon_n\to0$ and
a sequence of 
points $\{x_n\}\subset\Omega$ we have $\lim_{n\to\infty} t_{\varepsilon_n}(x_n)=T$
with $T\neq 0$. We distinguish two cases:
\ben
\item
 $\d\lim_{n\to\infty}\d\frac{\text{dist}(x_n,\partial\Omega)}{\varepsilon_n}=\infty$.
 \item
 $\d\liminf_{n\to\infty}\, \d\frac{\text{dist}(x_n,\partial\Omega)}{\varepsilon_n}<\infty$.
 \een

In Case 1 we define a rescaled sequence on $B_{R_n}(0)$, with
$R_n:=\d\frac{\text{dist}(x_n,\partial\Omega)}{\varepsilon_n}$, by
\begin{equation}
\label{eq:9}
  \widetilde u_{\varepsilon_n}(x):=u_{\varepsilon_n}(x_n+\varepsilon_nx).
\end{equation}

%The associated rescaled sequences $\{{\widetilde t}_{\varepsilon_n}\}$ and $\{{\widetilde\varphi}_{\varepsilon_n}\}$
% solve the system  \eqref{eq:ELN}, with $\varepsilon=1$. 
 By our assumptions,
$R_n\to\infty$ and, by standard elliptic estimates, a further subsequence,
still denoted by $\{  \widetilde u_{\varepsilon_n}\}$, converges in
$C^{1,\beta}_{\text{loc}}(\R^2)$ to a limit $\widetilde u$, solution of
$\Delta\widetilde u=\nabla W(\widetilde u)$ on all of $\R^2$ and such that $\dist (\widetilde u(x), \Gamma)\le \delta_1$, $\fo x\in\R^2$. The associated
$\widetilde t,\widetilde \varphi$ then solve the system  \eqref{eq:ELN}, with $\varepsilon=1$ on
$\R^2$ and $\widetilde t(0)=T\neq 0$. But then the proof of
\rprop{prop:max} shows that the two functions $e^{k\widetilde v}$ and
$e^{k\widetilde w}$, where 
\begin{equation*}
  \widetilde v:=\frac{m\widetilde t^2}{2}-\widetilde\varphi\text{ and } \widetilde w:=\frac{m\widetilde t^2}{2}+\widetilde\varphi,
\end{equation*}
 are subharmonic and bounded on $\R^2$. It follows that both $\widetilde v$
 and $\widetilde w$ are identically constant in $\R^2$, and therefore the
 same holds for $\widetilde t$ and $\widetilde\varphi$. In particular $\widetilde
 t\equiv T\neq 0$
 and $\nabla\widetilde \varphi\equiv 0$. But then, in view of \eqref{eq:1-1}, equation
 \eqref{eq:ELN-t} is violated. Contradiction.

 Consider next Case 2. We may assume that
$L=\d\lim_{n\to\infty}\frac{\text{dist}(x_n,\partial\Omega)}{\varepsilon_n}$ exists. By
\rlemma{lem:grad}, we have $L>0$. Arguing similarly to Case 1 we define the
rescaled sequence $\{\widetilde u_{\varepsilon_n}\}$ by \eqref{eq:9}. Again, a
subsequence converges to a solution of
$\Delta\widetilde u=\nabla W(\widetilde u)$, this time on a half-plane $H$, with a constant boundary
condition $\widetilde u=\gamma$ on $\p H$, for some point $\gamma\in
\Gamma$.
% ($\Phi$ is some constant in $[0,2\pi)$). 

With no loss of generality, we may assume that  $H=\R\times (0, \infty)$. We know
that for some point $(x_0,y_0)\in H$ with $y_0=L$ 
 we have $\widetilde
t(x_0, L)=T\neq 0$. In addition, the boundary condition $\widetilde u=\gamma$ implies that the corresponding coordinates $\widetilde t$ and $\widetilde \va$ satisfy $\widetilde t=0$ on $\p H$ and $\widetilde \va=\Phi=$const. on $\p H$.

As above,  the functions $e^{k\widetilde v}$ and
$e^{k\widetilde w}$ are subharmonic. Since they are also bounded, the maximum
principle applies on $H$ and we obtain that both functions attain their
maximum on $\partial H$. We obtain that
\begin{equation*}
\Phi\le \widetilde\varphi(x)-\frac{m\widetilde t^2(x)}{2}\le \widetilde\varphi(x) +\frac{m\widetilde t^2(x)}{2} \le \Phi,\ \fo x\in H.
\end{equation*}
It follows that $\widetilde t\equiv 0$, contradicting $\widetilde
t(x_0)=T\neq 0$.
\end{proof}
Next we prove strong convergence of $\{\ue\}$ to $u$ in $H^1$.

\begin{proposition}
\label{prop:H1} As $\ve\to 0$, we have 
 \begin{equation}
    \label{eq:21}
\ue\to u_0 \text{ in } H^1(\Omega)\text{ and
} E_\varepsilon(\ue)\to \frac{1}{2}\int_\Omega|\nabla u_0|^2.
  \end{equation}
\end{proposition}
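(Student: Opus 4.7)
My plan is to work throughout with the local coordinates $(t_\ve,\va_\ve)$ from \eqref{eq:36}. Since $\O$ is simply connected, $\va_\ve$ is globally defined on $\ovo$, and upon choosing the $2\pi$-branch so that $\va_\ve=\va_0$ on $\po$, \rcor{rem:max} gives the uniform bound $\|\va_\ve\|_{L^\infty}\le\|\va_0\|_{L^\infty}$; in particular $\psi_\ve:=\va_\ve-\zeta\in H^1_0(\O)$ is uniformly bounded in $L^\infty$. By \rlemma{lem:limt} the coefficients in \eqref{eq:abc} satisfy $a=1+o(1)$ and $b=o(1)$ uniformly as $\ve\to 0$.

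The quantitative input comes from testing the two pieces of \eqref{eq:ELN}. Multiplying \eqref{eq:ELN-t} by $t_\ve$ (which vanishes on $\po$) and using \eqref{eq:1-1}--\eqref{eq:1-2} yields
\[
\int_\O|\na t_\ve|^2+\frac{c_1}{\ve^2}\int_\O t_\ve^2\le c_2\,\|t_\ve\|_\infty\int_\O|\na\va_\ve|^2.
\]
Multiplying \eqref{eq:ELN-phi} by $\psi_\ve$ and integrating by parts gives
\[
\int_\O a\,|\na\va_\ve|^2
=\int_\O a\,\na\va_\ve\cdot\na\zeta+\int_\O\psi_\ve\, b\,|\na\va_\ve|^2-\int_\O\psi_\ve\,\alpha_\va\,\frac{t_\ve^2}{\ve^2}.
\]
Using Cauchy--Schwarz on the first right-hand term, the uniform smallness of $b$ and the $L^\infty$-bound on $\psi_\ve$ on the second, and the previous inequality to dominate the third, I absorb an $o(1)\int|\na\va_\ve|^2$ into the left-hand side and obtain a uniform $H^1$-bound on $\va_\ve$. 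Feeding this back into the $t_\ve$-estimate yields $\|\na t_\ve\|_2\to 0$ and $\int_\O t_\ve^2/\ve^2\to 0$.

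Equipped with these bounds, I extract a subsequence $\va_\ve\wc\va_*$ weakly in $H^1(\O)$, with $\va_*-\zeta\in H^1_0$. Passing to the limit in the tested identity --- $a\to1$ and $b\to 0$ uniformly, $\na\va_\ve\wc\na\va_*$ weakly, $t_\ve^2/\ve^2\to 0$ in $L^1$ --- produces
\[
\limsup_{\ve\to 0}\int_\O|\na\va_\ve|^2\le\int_\O\na\va_*\cdot\na\zeta=\int_\O|\na\zeta|^2,
\]
the last equality following from the harmonicity of $\zeta$ together with $\va_*-\zeta\in H^1_0$. Weak lower semicontinuity combined with the orthogonal decomposition $\int|\na\va_*|^2=\int|\na\zeta|^2+\int|\na(\va_*-\zeta)|^2$ then forces $\va_*=\zeta$ and $\lim\int|\na\va_\ve|^2=\int|\na\zeta|^2$, upgrading $\va_\ve\to\zeta$ to \emph{strong} convergence in $H^1$; uniqueness of the limit extends it to the full family. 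Since also $t_\ve\to 0$ strongly in $H^1$, the parametrization \eqref{eq:36} yields $\ue\to u_0$ in $H^1$; finally \eqref{ha2}--\eqref{ha3} together with the bounds just established give $\int|\na\ue|^2\to\int|\na u_0|^2$ and $\int W(\ue)/\ve^2\to 0$, which delivers the convergence of the energies.

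The main obstacle is controlling the quadratic gradient term $\int\psi_\ve\, b\,|\na\va_\ve|^2$ in the tested identity: only the smallness $b=O(t_\ve)=o(1)$ --- the content of \rlemma{lem:limt} --- allows it to be absorbed into the left-hand side. The $L^\infty$ bound on $\psi_\ve$ coming from \rcor{rem:max} is equally indispensable, since otherwise the $t_\ve^2/\ve^2$ term could a priori be unbounded and block the whole scheme.
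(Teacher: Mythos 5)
Your proposal is correct and rests on the same core ingredients as the paper's proof: the $L^\infty$-bound on $\psi_\ve$ from Corollary~\ref{rem:max}, the uniform smallness $\|t_\ve\|_\infty=o(1)$ from Lemma~\ref{lem:limt} (which makes $a-1$ and $b$ uniformly small), and testing the two equations of \eqref{eq:ELN}. The difference is in how the argument is closed. The paper rewrites \eqref{eq:ELN-phi} in terms of $\psi$ (see \eqref{eq:16}), tests with $\psi$, and directly derives the closed estimate $\int_\O|\nabla\psi_\ve|^2\le C\int_\O t_\ve^2/\ve^2$; substituting into the $t$-estimate and back then gives $\int_\O|\nabla\psi_\ve|^2=o(1)$ with no compactness step. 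You instead extract only a uniform bound $\int_\O|\nabla\va_\ve|^2\le C$, feed it into the $t$-estimate to get $\int_\O t_\ve^2/\ve^2=o(1)$, and then invoke weak $H^1$-compactness, the limiting tested identity, and the Dirichlet orthogonality $\int|\nabla\va_*|^2=\int|\nabla\zeta|^2+\int|\nabla(\va_*-\zeta)|^2$ to identify $\va_*=\zeta$ and upgrade to strong convergence. Both routes are valid; the paper's is more quantitative and a bit shorter, while yours is softer. You could shorten your own endgame: once $\int_\O|\nabla\va_\ve|^2\le C$ and $\int_\O t_\ve^2/\ve^2=o(1)$ are in hand, plugging back into your tested identity gives $\int_\O\nabla\va_\ve\cdot\nabla\psi_\ve=o(1)$, and since $\int_\O\nabla\zeta\cdot\nabla\psi_\ve=0$ by harmonicity, this yields $\int_\O|\nabla\psi_\ve|^2=o(1)$ directly, dispensing with the weak-limit and lower-semicontinuity step.
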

\begin{proof}
 Write
  $\varphi_\varepsilon=\psi_\varepsilon+\zeta$ (see \eqref{eq:38}).  The phase $\va_\ve$ is determined up to an integer multiple of $2\pi$. We fix $\va_\ve$ by imposing
  \be
  \l{hb1}
  \psi_\ve=0\text{ on }\p\O.
  \ee
  
  Note that by Corollary~\ref{rem:max} we have
  \begin{equation}
\label{eq:15}
    \|\psi_\varepsilon\|_\infty\leq 2M:=2\|\varphi_0\|_\infty.
  \end{equation}
 We rewrite \eqref{eq:ELN-phi} (dropping the subscript $\varepsilon$) as
 \begin{equation}
   \label{eq:16}
-\text{div}(a\nabla\psi)=\text{div}((a-1)\nabla\zeta)+b\left(|\nabla\psi|^2+2\nabla\psi\cdot\nabla\zeta+|\nabla\zeta|^2\right)-\frac{\alpha_\va t^2}{\ve^2}.
 \end{equation}
Multiplying \eqref{eq:16} by $\psi\in H^1_0(\O)$ and integrating 
yields
\begin{equation*}
\int_\Omega a|\nabla\psi|^2=\int_\Omega\left[(1-a)\nabla\zeta\cdot\nabla\psi+b\left(|\nabla\psi|^2+2\nabla\psi\cdot\nabla\zeta+|\nabla\zeta|^2\right)\psi-\frac{\alpha_\varphi t^2}{\varepsilon^2}\psi\right].
\end{equation*}

Using Cauchy-Schwarz inequality, \eqref{eq:15},
 \eqref{eq:1-0}, \eqref{eq:1-3}, \eqref{eq:1-5}, \rlemma{lem:limt} and
Poincar\'e inequality, it follows  
that for some constant $C=C(g)$  and for sufficiently small $\varepsilon$ we have
\begin{equation}
  \label{eq:13}
 \int_\Omega |\nabla\psi|^2\leq C\int_\Omega\frac{t^2}{\varepsilon^2}.
\end{equation}
Similarly, we rewrite \eqref{eq:ELN-t} as
\begin{equation}
  \label{eq:18}
-\Delta t+(2\alpha+\alpha_t t)\frac{t}{\varepsilon^2}=c\left(|\nabla\psi|^2+2\nabla\psi\cdot\nabla\zeta+|\nabla\zeta|^2\right).
\end{equation}
Multiplying \eqref{eq:18} by $t\in H^1_0(\O)$, integrating and using \eqref{eq:1-1}
leads to
\begin{equation}
  \label{eq:19}
\int_\Omega \left[|\nabla t|^2+(2\alpha+\alpha_t t)\frac{t^2}{\varepsilon^2}\right]=\int_\Omega  c t\left(|\nabla\psi|^2+2\nabla\psi\cdot\nabla\zeta+|\nabla\zeta|^2\right).
\end{equation}
Using \eqref{eq:1-1} and \eqref{eq:1-2}  in \eqref{eq:19} gives
\begin{equation}
  \label{eq:14}
\int_\Omega \left[|\nabla t|^2+c_1\frac{t^2}{\varepsilon^2}\right]\leq C\|t\|_\infty\left(1+\int_\Omega|\nabla\psi|^2\right).
\end{equation}
Plugging \eqref{eq:13}  into \eqref{eq:14} yields (using \rlemma{lem:limt})
\begin{equation}
  \label{eq:17}
\int_\Omega \left[|\nabla t|^2+\frac{t^2}{\varepsilon^2}\right]=o(1).
\end{equation}

Combining \eqref{eq:13} and \eqref{eq:17}, we find that
%\begin{equation}
%  \label{eq:20}
%\int_\Omega |\nabla\psi|^2\leq C\|t\|_\infty(1+\int_\Omega|\nabla\psi|^2).
%\end{equation}
% From \eqref{eq:20}  and \rlemma{lem:limt} it is clear that
 \begin{equation}
   \label{eq:12}
  \int_\Omega |\nabla\psi|^2=o(1).
 \end{equation}

The conclusion \eqref{eq:21} clearly follows from
\eqref{eq:17}--\eqref{eq:12}.
\end{proof}
\begin{remark}
\label{rem:e0}
Note that \rprop{prop:H1} implies a uniform bound for
$E_\varepsilon(u_\varepsilon)$ for {\em all} $\varepsilon>0$. Indeed,
it suffices to consider only small values of $\varepsilon$, e.g.,
$\varepsilon<\varepsilon_0$, since for all
$\varepsilon\geq\varepsilon_0$ we deduce from the Euler-Lagrange equation \eqref{eq:EL}, \rlemma{lem:infty} and standard elliptic
estimates that
\begin{equation}
  \label{eq:37}
  \|u_\varepsilon\|_{W^{2,p}}\leq C(p,\varepsilon_0),\ \fo
  p<\infty,\ \text{ and }\ \|u_\varepsilon\|_{C^{1,\alpha}}\leq C(\alpha,\varepsilon_0),\ \fo 
  \alpha<1.
\end{equation}
We shall use this observation below for other estimates as well.
\end{remark}
\begin{lemma}
  \label{lem:w1.4}
  $\{\varphi_\varepsilon\}$ is bounded in $W^{1,4}(\Omega)$.
\end{lemma}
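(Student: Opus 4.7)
The plan is to reduce the $W^{1,4}$ bound on $\varphi_\varepsilon$ to a bound on $\psi_\varepsilon := \varphi_\varepsilon - \zeta$, since $\zeta$ is the harmonic extension of a smooth boundary datum (hence smooth on $\overline\Omega$). By \eqref{hb1}, $\psi_\varepsilon$ vanishes on $\partial\Omega$, and since $\Delta\zeta = 0$, the identity \eqref{eq:4} gives
\begin{equation*}
-\Delta\psi_\varepsilon \;=\; F_\varepsilon \;:=\; -\frac{b}{a}|\nabla\varphi_\varepsilon|^2 \;+\; \frac{a_t}{a}\nabla\varphi_\varepsilon\cdot\nabla t_\varepsilon \;-\; \frac{\alpha_\varphi}{a}\frac{t_\varepsilon^2}{\varepsilon^2}.
\end{equation*}
In two dimensions one has the continuous embedding $W^{2,4/3}(\Omega)\hookrightarrow W^{1,4}(\Omega)$, so via the standard Calder\'on--Zygmund/Dirichlet estimate $\|\psi_\varepsilon\|_{W^{2,4/3}}\leq C\|F_\varepsilon\|_{L^{4/3}}$ the lemma reduces to a uniform bound $\|F_\varepsilon\|_{L^{4/3}}\leq C$.

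For the three terms in $F_\varepsilon$ I would use the coefficient bounds \eqref{eq:1}. The quadratic-gradient term carries the small factor $|b/a|\leq c_3|t_\varepsilon|$, with $\|t_\varepsilon\|_\infty\to 0$ by \rlemma{lem:limt}; its $L^{4/3}$-norm is controlled by $\|t_\varepsilon\|_\infty\|\nabla\varphi_\varepsilon\|_{L^{8/3}}^2$. The cross term is handled by H\"older, bounded by $2c_4\|\nabla\varphi_\varepsilon\|_{L^{8/3}}\|\nabla t_\varepsilon\|_{L^{8/3}}$. For both we interpolate the $L^{8/3}$-norm between the $L^2$-bound from \rprop{prop:H1} (which in fact gives $o(1)$ for $\nabla\psi_\varepsilon$ and $\nabla t_\varepsilon$) and the $L^\infty$-bound $\|\nabla \ue\|_\infty\leq C/\varepsilon$ from \rlemma{lem:grad}. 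The last term is controlled via interpolation using the $L^\infty$-smallness of $t_\varepsilon$ together with $\|t_\varepsilon/\varepsilon\|_{L^2}=o(1)$ (coming from \eqref{eq:17}).

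The hard part is that the interpolation between $L^2$ and $L^\infty$ introduces negative powers of $\varepsilon$; for instance one gets $\|\nabla\varphi_\varepsilon\|_{L^{8/3}}^2\lesssim\varepsilon^{-1/2}$, so the smallness of $\|t_\varepsilon\|_\infty$ must absorb these factors to yield a uniform bound. Since \rlemma{lem:limt} provides no quantitative rate for $\|t_\varepsilon\|_\infty$, one must close the estimate by coupling it with the analogous $L^{4/3}$-elliptic regularity for the equation \eqref{eq:ELN-t} of $t_\varepsilon$, obtaining a simultaneous bound on $\|\nabla\psi_\varepsilon\|_{L^4}+\|\nabla t_\varepsilon\|_{L^4}$. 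The smallness of $\|t_\varepsilon\|_\infty$ and of $\|t_\varepsilon/\varepsilon\|_{L^2}$ then enters multiplicatively, and an inequality of the schematic form $X\leq o(1)X+C$ produces the required uniform bound. Alternatively, a blow-up/Liouville argument analogous to the proof of \rlemma{lem:limt} could be used to rule out failure of the bound.
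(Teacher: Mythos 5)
Your reduction to an $L^{4/3}$-estimate for the right-hand side of a Dirichlet problem for $\psi_\varepsilon$ is the right idea and is what the paper does, but there are two substantive gaps that prevent your estimate from closing. First, by passing to the non-divergence form \eqref{eq:4} you produce the cross term $\frac{a_t}{a}\nabla\varphi_\varepsilon\cdot\nabla t_\varepsilon$, whose coefficient is bounded only by the constant $2c_4$ from \eqref{eq:1-4} and carries \emph{no} factor of $t_\varepsilon$; so the smallness of $\|t_\varepsilon\|_\infty$ from \rlemma{lem:limt}, which you invoke as absorbing the bad factors for ``both'' gradient terms, simply does not apply to it. The paper avoids this by keeping the Euler--Lagrange equation in divergence form, so that the cross-derivative structure appears as $\div\bigl((a-1)\nabla\psi_\varepsilon\bigr)$ (inside $R_\varepsilon$ in \eqref{eq:23}) with the genuinely small factor $a-1=O(t_\varepsilon)$; the $L^p\to W^{1,p}$ estimate for $\div$-form data then yields a coefficient $\|t_\varepsilon\|_\infty=o(1)$ in front of $\|\nabla\psi_\varepsilon\|_p$, exactly as needed.

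Second, the $L^2$--$L^\infty$ interpolation you propose produces $\varepsilon^{-1/2}$ factors that cannot be absorbed, as you correctly note; but the fix is not a quantitative rate for $\|t_\varepsilon\|_\infty$ nor a simultaneous $W^{1,4}$-bound for $\nabla t_\varepsilon$. The paper interpolates the $L^{2q}$-norm (with $q=2p/(p+2)$, essentially $L^{8/3}$) between $L^2$ and $L^p$ via $\|f\|_{2q}^2\leq\|f\|_2\|f\|_p$ (inequality \eqref{eq:26}), so the small factor entering multiplicatively is $\|\nabla\psi_\varepsilon\|_2=o(1)$ from \eqref{eq:12}, not $\|t_\varepsilon\|_\infty$; and the quantitative input replacing a rate for $t_\varepsilon$ is $\int_\Omega t_\varepsilon^2\leq C\varepsilon^4\|\nabla\varphi_\varepsilon\|_4^4$ (inequality \eqref{eq:28}), obtained directly from an energy estimate for \eqref{eq:ELN-t}, together with $\|t_\varepsilon\|_2=o(\varepsilon)$ from \eqref{eq:17}. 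Your fallback of an $L^{4/3}$-elliptic estimate for the coercive, $\varepsilon$-dependent and variable-coefficient operator $-\Delta+(2\alpha+\alpha_t t)\varepsilon^{-2}$ is both unnecessary and delicate to justify with $\varepsilon$-uniform constants, and the blow-up alternative is not developed. As stated, the proposal identifies the right reduction but does not supply the two ideas (divergence form to retain the $O(t)$ factor, and the $L^2$--$L^p$ interpolation with \eqref{eq:28}) that make the estimate close.
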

\begin{proof}
  We use the same notation as in the proof of \rprop{prop:H1} and
  write $\varphi_\varepsilon=\psi_\varepsilon+\zeta=\psi+\zeta$. We will actually show that
  \begin{equation}
    \label{eq:22}
 \int_\Omega |\nabla\psi_\varepsilon|^4=o(1),
  \end{equation}
 that clearly implies the result for small $\varepsilon$ (and then the
 result for any $\varepsilon>0$ follows from \rrem{rem:e0}).
 Rewrite \eqref{eq:ELN-phi} as 
 \begin{equation}
   \label{eq:23}
\left\{
\begin{aligned}
-\Delta\psi&=b|\nabla\psi|^2
%\\ &\phantom{=}
     +\underbrace{2b\nabla\psi\cdot\nabla\zeta+b|\nabla\zeta|^2+\div\left((a-1)\nabla\zeta+(a-1)\nabla\psi\right)}_{R=R_\ve}\\
     &\phantom{+}\underbrace{-\frac{\alpha_\varphi t^2}{\varepsilon^2}}_{S=S_\ve}=b|\nabla\psi|^2+R+S\text{ in }\Omega,\\
    \psi&=0\text{ on }\partial\Omega.
\end{aligned}
\right.
 \end{equation}
 
 We split $\psi=\psi_1+\psi_2+\psi_3$ where 
 \begin{equation}
\label{eq:24}
\left\{
\begin{aligned}
&-\Delta\psi_1=b|\nabla\psi|^2,\ -\Delta\psi_2=R,\ -\Delta\psi_3=S\text{ in }\Omega,\\
    &\psi_1=\psi_2=\psi_3=0\text{ on }\partial\Omega.
\end{aligned}
\right.
 \end{equation}
 
Fix any $p>2$. By standard elliptic estimates, using \eqref{eq:1-0}
and \eqref{eq:1-3},
\be
\begin{aligned}
  \label{eq:25}
 \|\nabla\psi_2\|_p\le & C_1\left\{
 \|2b\nabla\psi\cdot\nabla\zeta\|_p+\|b|\nabla\zeta|^2\|_p+\|(a-1)\nabla\zeta\|_p+\|(a-1)\nabla\psi\|_p\right\}\\\le
 & C_2\|t\|_\infty\left(\|\nabla\psi\|_p+1\right).
\end{aligned}
\ee

Next we estimate $\psi_1$. Let $p>1$ and set $q:=\d\frac{2p}{p+2}$ . Then, by Sobolev
embedding (in two dimensions), $W^{2,q}(\Omega)\hookrightarrow W^{1,p}(\Omega)$. Note also
that $\d\frac{1}{2q}=\frac{1/2}{2}+\frac{1/p}{2}$, hence
\begin{equation}
  \label{eq:26}
 \|f\|_{2q}^2\leq \| f\|_2\|f\|_p,\ \fo f\in L^p(\Omega).
\end{equation}
By elliptic estimates, \eqref{eq:1-3} and \eqref{eq:26} we obtain
\be
\label{eq:27}
\begin{aligned}
  \|\nabla\psi_1\|_p\leq &C_1\|\psi_1\|_{W^{2,q}}\leq
  C_2\|b|\nabla\psi|^2\|_q\leq C_3\|t\|_\infty\|\nabla\psi\|_{2q}^2 
  \\
  \leq 
  &
  C_4\|t\|_\infty\|\nabla\psi\|_{2}\|\nabla\psi\|_{p}\leq o(1)\cdot\|t\|_\infty\|\nabla\psi\|_ p,
  \end{aligned}
  \ee
where we used \eqref{eq:12} in the last inequality.

Finally, we turn to $\psi_3$. Multiplying \eqref{eq:ELN-t} by $t$,
integrating and using \eqref{eq:1-1} and the Cauchy-Schwarz inequality yields
\begin{equation*}
\frac{c_1}{\varepsilon^2}\|t\|_2^2\leq \int_\Omega ct|\nabla\varphi|^2\leq
C\|t\|_2\|\nabla\varphi\|_4^2,
\end{equation*}
implying that (for small $\varepsilon$),
\begin{equation}
\label{eq:28}
\int_\Omega t^4\leq \int_\Omega t^2\leq C\varepsilon^4\|\nabla\varphi\|_4^4.
  \end{equation}
Recall also that by \eqref{eq:17},
\begin{equation}
  \label{eq:29}
\|t\|_2=o(\varepsilon).
\end{equation}
Again by elliptic estimates and \eqref{eq:26} we get
\begin{equation}
  \label{eq:30}
 \|\nabla\psi_3\|_p\leq \frac{C}{\varepsilon^2}\|t^2\|_q= \frac{C}{\varepsilon^2}\|t\|_{2q}^2\leq
 \frac{C}{\varepsilon^2}\|t\|_2\|t\|_p.
\end{equation}
 Choose $p=4$. Using \eqref{eq:28}--\eqref{eq:29} in
 \eqref{eq:30} gives:
 \begin{equation}
   \label{eq:31}
  \|\nabla\psi_3\|_4\leq \frac{C}{\varepsilon^2}\cdot
  o(\varepsilon)\cdot\varepsilon\|\nabla\varphi\|_4\leq o(1)\cdot\left(\|\nabla\psi\|_4+1\right). 
 \end{equation}
Combining \eqref{eq:25},\eqref{eq:27} and \eqref{eq:31} and using
\rlemma{lem:limt} we obtain
\begin{equation*}
  \| \nabla\psi\|_4\leq o(1)\cdot\left(\|\nabla\psi\|_4+1\right),
\end{equation*}
and \eqref{eq:22} follows.
\end{proof}
\begin{lemma}
\label{lem:H2}
  $\{u_\varepsilon\}$ is bounded in $H^2(\Omega)$. 
\end{lemma}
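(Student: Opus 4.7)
The strategy is to control $\Delta u_\varepsilon$ in $L^2$ via the Euler--Lagrange equation and standard elliptic regularity up to the boundary. Since $g$ is smooth and $\partial\Omega$ is smooth, it suffices to show that $\|\Delta u_\varepsilon\|_{L^2(\Omega)}\leq C$ uniformly in $\varepsilon$; the $H^2$ bound then follows from elliptic estimates for the Dirichlet problem, combined with the $H^1$ bound already provided by \rprop{prop:H1}. As usual, Remark \ref{rem:e0} reduces the problem to small $\varepsilon$.

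First I would observe that, by \eqref{ha3} and the smoothness of $\alpha$, in the tubular neighborhood $\Gamma_{\delta_\Gamma}$ one has the pointwise bound
\begin{equation*}
|\nabla W(\zeta)|\leq C\,\dist(\zeta,\Gamma),
\end{equation*}
since $W$ vanishes to second order on $\Gamma$. Plugging $\zeta=u_\varepsilon(x)$, and using that $\dist(u_\varepsilon,\Gamma)=|t_\varepsilon|$ together with the Euler--Lagrange equation \eqref{eq:EL}, I get
\begin{equation*}
|\Delta u_\varepsilon(x)|=\frac{1}{\varepsilon^2}|\nabla W(u_\varepsilon(x))|\leq \frac{C\,|t_\varepsilon(x)|}{\varepsilon^2}.
\end{equation*}

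The main step is therefore an $L^2$-estimate on $t_\varepsilon/\varepsilon^2$. This is exactly the content of \eqref{eq:28} in the proof of \rlemma{lem:w1.4}, which yields
\begin{equation*}
\int_\Omega t_\varepsilon^2\leq C\varepsilon^4\|\nabla\varphi_\varepsilon\|_4^4.
\end{equation*}
Now that \rlemma{lem:w1.4} is established, $\|\nabla\varphi_\varepsilon\|_4$ is bounded, so $\|t_\varepsilon\|_2\leq C\varepsilon^2$. Combining with the previous pointwise inequality gives $\|\Delta u_\varepsilon\|_{L^2(\Omega)}\leq C$.

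Finally, since $u_\varepsilon=g$ on $\partial\Omega$ with $g$ smooth and $\partial\Omega$ smooth, standard $H^2$-regularity for the Poisson equation with Dirichlet data gives
\begin{equation*}
\|u_\varepsilon\|_{H^2(\Omega)}\leq C\bigl(\|\Delta u_\varepsilon\|_{L^2(\Omega)}+\|u_\varepsilon\|_{H^1(\Omega)}+\|g\|_{H^{3/2}(\partial\Omega)}\bigr)\leq C,
\end{equation*}
the right-hand side being bounded uniformly in $\varepsilon$ thanks to \rprop{prop:H1}. The only mild subtlety is the passage from the pointwise bound on $\nabla W(u_\varepsilon)$ (valid in the tubular neighborhood, which covers $\Omega$ for small $\varepsilon$ by \rlemma{lem:limt}) to the global $L^2$ estimate; for the remaining range $\varepsilon\geq\varepsilon_0$, the bound is covered by \rrem{rem:e0}.
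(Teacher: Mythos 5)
Your proof is correct and is essentially the same as the paper's: both reduce to small $\varepsilon$ via Remark \ref{rem:e0}, combine \eqref{eq:28} with the $W^{1,4}$ bound from Lemma \ref{lem:w1.4} to get $\|t_\varepsilon\|_2\leq C\varepsilon^2$, use $|\nabla W(u_\varepsilon)|=O(t_\varepsilon)$ to bound $\Delta u_\varepsilon$ in $L^2$, and conclude by elliptic regularity. You simply spell out the intermediate steps (the pointwise gradient bound on $W$, the explicit boundary-data term in the elliptic estimate) that the paper leaves implicit.
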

\begin{proof}
Again by \rrem{rem:e0}, it suffices to consider small $\varepsilon$.
Using the $L^4$-bound of \rlemma{lem:w1.4} for  $\nabla\varphi_\varepsilon$  in \eqref{eq:28} yields
\begin{equation}
\label{eq:33}
\int_\Omega t^2\leq C\varepsilon^4.
  \end{equation}
Since $|\nabla W(u_\varepsilon)|=O(t_\varepsilon)$, we deduce from \eqref{eq:33} that the right-hand side of the equation in \eqref{eq:EL} is bounded in $L^2(\Omega)$ and the
conclusion follows from elliptic estimates.
\end{proof}
\begin{proposition}
  \label{prop:t-eps2}
  We have
  \begin{gather}
\|t_\varepsilon\|_\infty\leq C\varepsilon^2, \label{eq:34-1}\\
\|\nabla t_\varepsilon\|_\infty\leq C\varepsilon, \label{eq:34-2}\\
\|\psi_\varepsilon\|_\infty\leq C\varepsilon^2, \label{eq:34-3}\\
\|\nabla\psi_\varepsilon\|_\infty\leq C\varepsilon. \label{eq:34-4}
  \end{gather}
\end{proposition}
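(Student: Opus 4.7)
The proof bootstraps from the $H^2$-bound of \rlemma{lem:H2} up to the sharp $L^\infty$-bounds with the correct powers of $\varepsilon$. The key auxiliary step is a $W^{1,p}$ estimate on $\psi_\varepsilon$ of size $O(\varepsilon^2)$; once this is in place, the remaining estimates follow by a maximum principle and a standard $\varepsilon$-rescaling argument.

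Step 1 --- improved $L^q$ bounds on $t_\varepsilon$. By 2D Sobolev embedding, \rlemma{lem:H2} gives uniform $W^{1,p}(\Omega)$ bounds on $\va_\varepsilon$ and $t_\varepsilon$ for every $p<\infty$, strengthening the $L^2$-estimate $\|t_\varepsilon\|_2\le C\varepsilon^2$ from the proof of \rlemma{lem:H2}. Multiplying \eqref{eq:ELN-t} by $|t_\varepsilon|^{q-2}t_\varepsilon\in H^1_0(\Omega)$, using the coercivity \eqref{eq:1-1} and H\"older's inequality, yields
\begin{equation*}
\frac{c_1}{\varepsilon^2}\int_\Omega |t_\varepsilon|^q\leq C\int_\Omega |\nabla\va_\varepsilon|^2\,|t_\varepsilon|^{q-1}\leq C\,\|\nabla\va_\varepsilon\|_{2q}^2\,\|t_\varepsilon\|_q^{q-1},
\end{equation*}
hence $\|t_\varepsilon\|_q\leq C(q)\,\varepsilon^2$ for every $q<\infty$.

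Step 2 --- sharp $W^{1,p}$ bound on $\psi_\varepsilon$, yielding \eqref{eq:34-3}. Rewrite \eqref{eq:16} as $-\div(a\nabla\psi_\varepsilon)=F_\varepsilon-\div\bigl((a-1)\nabla\zeta\bigr)$, keeping $(a-1)\nabla\zeta$ inside the divergence. Then $\|(a-1)\nabla\zeta\|_q\le c_0\|t_\varepsilon\|_q\,\|\nabla\zeta\|_\infty=O(\varepsilon^2)$, and every summand of $F_\varepsilon$ is $O(\varepsilon^2)$ in each $L^q$: the quadratic term $b|\nabla\psi_\varepsilon|^2$ via $\|b\|_{2q}\|\nabla\psi_\varepsilon\|_{4q}^2\le C\|t_\varepsilon\|_{2q}=O(\varepsilon^2)$, the potential term $\alpha_\va t_\varepsilon^2/\varepsilon^2$ via $\|t_\varepsilon\|_{2q}^2/\varepsilon^2=O(\varepsilon^2)$, and the remaining linear-in-gradient terms similarly. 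Standard $L^p$-theory for divergence-form equations with bounded coefficients and datum $\psi_\varepsilon=0$ on $\p\O$ gives $\|\psi_\varepsilon\|_{W^{1,p}}=O(\varepsilon^2)$ for every $p<\infty$, so $\|\psi_\varepsilon\|_\infty=O(\varepsilon^2)$ by 2D Sobolev embedding, proving \eqref{eq:34-3}. Separately, $W^{2,p}$-theory applied to the non-divergence equation \eqref{eq:4}, whose right-hand side is bounded in every $L^p$ by Step 1 and the $H^2$-bound, produces the uniform estimate $\|\nabla\va_\varepsilon\|_\infty\leq C$.

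Steps 3--4 --- the remaining estimates. The bound \eqref{eq:34-1} follows from the maximum principle applied to \eqref{eq:ELN-t}: at an interior extremum $x_0$ of $t_\varepsilon$, $-\Delta t_\varepsilon(x_0)$ has the appropriate sign, so \eqref{eq:1-1} gives $c_1|t_\varepsilon(x_0)|/\varepsilon^2\leq c_2\|\nabla\va_\varepsilon\|_\infty^2\leq C$, whence $\|t_\varepsilon\|_\infty\leq C\varepsilon^2$ (using also $t_\varepsilon=0$ on $\p\O$). For \eqref{eq:34-2} and \eqref{eq:34-4}, fix $x_0\in\ovo$ and set $\tilde t(y)=t_\varepsilon(x_0+\varepsilon y)/\varepsilon^2$, $\tilde\psi(y)=\psi_\varepsilon(x_0+\varepsilon y)/\varepsilon^2$ on the unit ball (or on a half-ball near $\p\O$, after locally flattening the boundary via a smooth chart). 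These rescaled functions are uniformly bounded by \eqref{eq:34-1} and \eqref{eq:34-3}, and satisfy elliptic equations (obtained from \eqref{eq:ELN-t} and \eqref{eq:4} by $\varepsilon$-rescaling) with $O(1)$ coefficients and bounded right-hand sides. Interior and boundary $C^{1,\alpha}$-estimates yield $\|\nabla\tilde t\|_\infty,\|\nabla\tilde\psi\|_\infty\leq C$, which translate to \eqref{eq:34-2} and \eqref{eq:34-4}. The main obstacle is Step 2: equation \eqref{eq:16} is quasilinear with quadratic gradient growth, and the sharp $O(\varepsilon^2)$-rate works only because the coefficient $b$ of the quadratic term is itself proportional to $t_\varepsilon$ (by \eqref{eq:1-3}), so by Step 1 the nonlinear term is $O(\varepsilon^2)$ in $L^q$ and can be absorbed into the right-hand side rather than treated as a genuine nonlinear obstruction.
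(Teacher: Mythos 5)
Your proposal is correct and follows the same overall bootstrap strategy as the paper's proof, which is the Step~B.4 argument of Bethuel--Brezis--H\'elein: multiply \eqref{eq:ELN-t} by $|t|^{q-2}t$ to get $\|t_\varepsilon/\varepsilon^2\|_q\leq C(q)$, upgrade to uniform $L^\infty$-bounds for $\nabla u_\varepsilon$, and only then extract the sharp powers of $\varepsilon$. The differences are in several subroutines, and they are worth noting because each is a legitimate alternative to what the paper does.

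\begin{itemize}
\item To obtain $\|\nabla\varphi_\varepsilon\|_\infty\leq C$, the paper feeds the $L^q$-bound on $t_\varepsilon/\varepsilon^2$ back into the original Euler--Lagrange system \eqref{eq:EL} (since $|\nabla W(u_\varepsilon)|=O(t_\varepsilon)$) and applies $W^{2,q}$-estimates to $u_\varepsilon$; you instead apply $W^{2,p}$-theory to the phase equation \eqref{eq:4}. Both work because the same $L^q$-information makes the respective right-hand sides bounded.
\item To obtain \eqref{eq:34-1}, the paper passes to the limit $q\to\infty$ in $\|t_\varepsilon/\varepsilon^2\|_q\leq (c_2/c_1)\overline C^2|\Omega|^{1/q}$; your direct maximum-principle argument on \eqref{eq:ELN-t} is a clean substitute (and you correctly note $t_\varepsilon=0$ on $\partial\Omega$). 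You should just make explicit, as you do implicitly, that both the positive-max and negative-min cases give the bound thanks to \eqref{eq:1-1}.
\item For \eqref{eq:34-3}, the paper first proves $\|t_\varepsilon\|_\infty=O(\varepsilon^2)$ and then writes $\Delta\psi_\varepsilon=F+\div G$ with $\|F\|_\infty,\|G\|_\infty=O(\varepsilon^2)$; you obtain $\|\psi_\varepsilon\|_{W^{1,p}}=O(\varepsilon^2)$ using only the $L^q$-bounds on $t_\varepsilon$, which lets you do \eqref{eq:34-3} \emph{before} \eqref{eq:34-1}. This reordering is fine. One small caution: ``bounded coefficients'' alone is not enough for $W^{1,p}$-estimates with $p\neq 2$; the relevant fact is that $a=1+O(t_\varepsilon)$ with $\|t_\varepsilon\|_\infty\to 0$ (Lemma~\ref{lem:limt}), so you can absorb $\div((a-1)\nabla\psi_\varepsilon)$ into the Laplacian — which is exactly what the paper does.
\item For \eqref{eq:34-2} and \eqref{eq:34-4}, the paper invokes the interpolation lemma \cite[Lemma A.2]{bbh93}; your $\varepsilon$-rescaling plus interior/boundary Schauder estimates is a standard proof of that very interpolation inequality, so the two are essentially the same device.
\end{itemize}

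In short, your proof is a correct variant of the paper's, with the maximum-principle shortcut to \eqref{eq:34-1} and the reordering of \eqref{eq:34-3} versus \eqref{eq:34-1} being the only substantive deviations.
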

\begin{proof}
  We use an argument from \cite[Step B.4]{bbh93}. Fix $q>2$. Multiplying
  \eqref{eq:ELN-t} by $|t|^{q-2}t/(\varepsilon^2)^{q-1}$ and integrating gives
  \be
  %\begin{aligned}
    c_1\int_\Omega \left(\frac{|t|}{\varepsilon^2}\right)^q\le 
    %&
    \int_\Omega\frac{(q-1)}{\varepsilon^{2(q-1)}}|t|^{q-2}|\nabla t|^2+(2\alpha+\alpha_t
    t)\left(\frac{|t|}{\varepsilon^2}\right)^q
    %\\
    =
    %&
    \int_\Omega c|\nabla\varphi|^2\left(\frac{|t|}{\varepsilon^2}\right)^{q-2}\frac{t}{\varepsilon^2}.
  %\end{aligned}
  \ee
  
We conclude, using H\"older inequality and \eqref{eq:1-2}, that the function $\d f_\varepsilon=f:=\frac{t}{\varepsilon^2}$ satisfies
\begin{equation*}
  c_1\|f\|_q^q\leq\int_\Omega c |\nabla\varphi|^2|f|^{q-1}\leq c_2\|\nabla\varphi\|_{2q}^2\|f\|_q^{q-1},
\end{equation*}
 i.e.,
 \begin{equation}
   \label{eq:35}
\|f\|_q\leq \frac{c_2}{c_1}\|\nabla\varphi\|_{2q}^2.
 \end{equation}
By \rlemma{lem:H2} and Sobolev embedding,  $\{\nabla\ue\}$ is uniformly
bounded in $L^r(\Omega)$ for every $r\in[1,\infty)$, and we obtain from \eqref{eq:35}
that $\|f\|_q\leq C_q$. It follows that for each $q>2$ the right-hand side of the
equation in \eqref{eq:EL} is bounded in
$L^q(\Omega)$. Hence $\{\nabla u_\varepsilon\}$ is uniformly bounded in $L^\infty(\Omega)$, 
and therefore
\begin{equation}
\label{eq:40}
 \|\nabla\varphi\|_\infty\leq \overline C,
\end{equation}
 for some constant $\overline C$. Going back to \eqref{eq:35} we obtain
 that
 \begin{equation}
   \label{eq:39}
\|f\|_q\leq \left(\frac{c_2}{c_1}\right){\overline C}^2|\Omega|^{1/q}.
 \end{equation}
Passing to the limit $q\to\infty$ in \eqref{eq:39} yields
\begin{equation*}
  \|f\|_\infty\leq  \left(\frac{c_2}{c_1}\right){\overline C}^2,
\end{equation*}
 and \eqref{eq:34-1} follows.

Next, using \eqref{eq:40} and \eqref{eq:34-1} in \eqref{eq:ELN-t}
gives the $\|\Delta t\|_\infty\leq C$. Combining this estimate with \eqref{eq:34-1}
and applying an interpolation inequality (see \cite[Lemma A.2]{bbh93})
yields \eqref{eq:34-2}. To prove
\eqref{eq:34-3}--\eqref{eq:34-4} for $\psi$, we use
\eqref{eq:40}  and the estimates
\begin{equation*}
  a-1=O(t)=O(\varepsilon^2)~\text{ and }~b=O(t)=O(\varepsilon^2),
\end{equation*}
which allow us to rewrite  \eqref{eq:23} in the form 
\bes
\Delta\psi=F+\div G,\text{ with }\|F\|_\infty=O(\varepsilon^2)\text{ and }\|G\|_\infty=O(\varepsilon^2).
\ees

The estimate
\eqref{eq:34-3} follows by elliptic estimates and finally
\eqref{eq:34-4} is deduced via interpolation as above.
\end{proof}
 The proof of the main result of this section is an easy consequence
 of our previous estimates.
\begin{proof}[Proof of \rth{th:conv-degz}]
  Since, by \eqref{ha3}, $|\nabla W(\ue)| =O(t_\varepsilon)$, \eqref{eq:42-2} follows from
  \eqref{eq:EL} and \eqref{eq:34-1}. By standard elliptic estimates we
  obtain that $\{u_\varepsilon\}$ is uniformly bounded in $C^{1,\beta}(\overline \Omega)$
  for all $\beta<1$, and \eqref{eq:42-1} follows by the Arzel\`a-Ascoli
  theorem (the identification of the limit as $u_0$ follows from
  \rprop{prop:H1}). Finally, \eqref{eq:42-3} is a consequence of
  \eqref{eq:34-1} and \eqref{eq:34-3}, while \eqref{eq:42-4} follows from
  \eqref{eq:34-2} and \eqref{eq:34-4}.
\end{proof}
\section{Boundary condition depending on $\varepsilon$}
\label{sec:dep-eps}
In the next sections we shall also need a version of
\rth{th:conv-degz} in the case where the boundary condition depends on
$\varepsilon$, and does not necessarily take  values into $\Gamma$ (analogously to \cite[Theorem 2]{bbh93} which deals with
minimizers for the GL energy). For $\Omega$ as in
\rsec{sec:no-vortices}, assume that the family $\{ g_\ve\}$ of maps 
$g_\varepsilon:\partial \Omega\to\R^2$, $\varepsilon>0$, satisfies:
\begin{align}
  \label{eq:42}
\|g_\varepsilon\|_{H^1(\partial\Omega)}&\leq C,\\
\int_{\partial\Omega} W(g_\varepsilon)&\leq C\varepsilon^2\label{eq:43}.
\end{align}

 From \eqref{eq:42}--\eqref{eq:43} it follows in particular that, possibly up to a subsequence,
 \begin{equation}
   \label{eq:44}
  g_\varepsilon\to g\text{ in }H^s (\p\O),\ \fo 0<s<1,\text{ and thus in }C^\alpha(\partial\Omega),\ \forall\,\alpha\in(0,1/2),
 \end{equation}
 for some $g\in H^{1}(\partial\Omega ; \Gamma)$. 
 
 For each $\varepsilon>0$ (or $\varepsilon\in(0,\varepsilon_0)$), let $\ue$ denote a solution of
 \begin{equation}
 \label{eq:EL-geps}
 \begin{cases}
 \d\Delta u_\ve=\frac{1}{\varepsilon^2}\nabla W(u_\ve)&\text {in }\Omega\\
 u_\ve=g_\varepsilon &\text{on }\partial\Omega
 \end{cases}.
 \end{equation}

We now make the crucial assumption that $\ue$ satisfies \eqref{eq:34} (at least for small $\ve$). Then we have 
 \be
\deg \Pi\circ g_\ve=0\text{ and thus }\deg g=0.\label{hb3}
\ee
 (Recall that $\Pi$ is the Euclidean projection on $\Gamma$.)

As before, we write $g(x)=\tau(e^{\im\varphi_0(x)})$, with $\va_0\in H^1(\p\O ; \R)$. Define, in $\Omega$, the $\Gamma$-valued map $u_0$
by \eqref{eq:38}, i.e., $u_0=\tau(e^{\im \zeta})$, where $\zeta$ is the
harmonic extension of $\varphi_0$ to $\Omega$.  Our main result establishes the
convergence of $\{\ue\}$ towards $u_0$ when $\varepsilon$ goes to zero:
 \begin{theorem}
   \label{th:conv-geps}
Under the assumptions \eqref{eq:42}--\eqref{eq:EL-geps} and \eqref{eq:34}  we have, as $\varepsilon\to0$,
  \begin{gather}
 \ue\to u_0 \text{ strongly  in }H^1(\Omega) \text{ and
   in }C^0(\overline{\Omega}),\label{eq:41-1}\\
\|\Delta u_\varepsilon\|_{L^\infty(K)}\leq C_K,\label{eq:41-2}\\
  \ue\to u_0 \text{ strongly  in }C^{1,\alpha}(K),~\forall\alpha<1, \label{eq:41-3}\\
 \|u_\varepsilon-u_0\|_{L^\infty(K)}\leq C_K\varepsilon^2\text{ and }\|\nabla(u_\varepsilon-u_0)\|_{L^\infty(K)}\leq
 C_K\varepsilon\label{eq:41-4},
  \end{gather}
 for every compact $K\subset\subset\Omega$. 
 \end{theorem}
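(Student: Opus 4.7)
The proof will follow the scheme of Theorem \ref{th:conv-degz}, with two new ingredients required by the $\varepsilon$-dependent, non-$\Gamma$-valued boundary data: (i) a global coordinate setup that accommodates a small but nonzero value of $t$ on $\partial\Omega$, and (ii) a localization of the interior $L^\infty$ bounds to compacts $K\subset\subset\Omega$. First, by \eqref{eq:34} and \eqref{eq:44}, for small $\varepsilon$ the map $\Pi\circ g_\varepsilon$ has degree zero, so we can globally write $\Pi\circ g_\varepsilon=\tau(e^{\im\varphi_0^\varepsilon})$ with $\varphi_0^\varepsilon\to\varphi_0$ in $H^s(\partial\Omega)$ for every $s<1$, and decompose $u_\varepsilon=\tau(e^{\im\varphi_\varepsilon})+t_\varepsilon\vec n$ globally in $\overline\Omega$. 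The non-degeneracy \eqref{eq:117} and \eqref{eq:43} give $\|t_\varepsilon\|_{L^2(\partial\Omega)}=O(\varepsilon)$, while \eqref{eq:42} yields a uniform $H^1(\partial\Omega)$ bound, whence by interpolation $\|t_\varepsilon\|_{H^{1/2}(\partial\Omega)}=O(\varepsilon^{1/2})$. Consequently the harmonic extension $h_\varepsilon$ of $t_\varepsilon|_{\partial\Omega}$ satisfies $\|h_\varepsilon\|_{H^1(\Omega)}=o(1)$, and the harmonic extension $\zeta_\varepsilon$ of $\varphi_0^\varepsilon$ converges to $\zeta$ in $H^1(\Omega)\cap C^0(\overline\Omega)$.

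Next I would repeat the blow-up argument of Lemma \ref{lem:limt} to prove that $t_\varepsilon\to 0$ uniformly on every compact $K\subset\subset\Omega$. If not, a violating sequence $\{x_n\}\subset K$ necessarily satisfies $\dist(x_n,\partial\Omega)/\varepsilon_n\to\infty$, so only Case 1 of that proof arises; the same Liouville-type argument based on the subharmonicity of $e^{k\widetilde v}$ and $e^{k\widetilde w}$ on $\R^2$ and on \eqref{eq:ELN-t} delivers the contradiction. For the global $H^1$ convergence in \eqref{eq:41-1} I would split $\varphi_\varepsilon=\psi_\varepsilon+\zeta_\varepsilon$ and $t_\varepsilon=s_\varepsilon+h_\varepsilon$ with $\psi_\varepsilon,s_\varepsilon=0$ on $\partial\Omega$, and repeat the computation of Proposition \ref{prop:H1} by testing \eqref{eq:ELN-phi} against $\psi_\varepsilon$ and \eqref{eq:ELN-t} against $s_\varepsilon$. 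The extra terms generated by $\nabla\zeta_\varepsilon$ and $\nabla h_\varepsilon$ are controlled by the bounds from the first paragraph, while $\|\psi_\varepsilon\|_\infty$ is bounded via Proposition \ref{prop:max} exactly as in Corollary \ref{rem:max}; this yields $\|\nabla\psi_\varepsilon\|_{L^2}+\|s_\varepsilon\|_{H^1}+\varepsilon^{-1}\|s_\varepsilon\|_{L^2}=o(1)$ and hence \eqref{eq:41-1}. The $C^0(\overline\Omega)$ statement then follows by combining this convergence with \eqref{eq:44} and the interior $L^\infty$ estimate produced in the next step.

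Finally, I would localize the arguments of Lemmas \ref{lem:w1.4}, \ref{lem:H2} and Proposition \ref{prop:t-eps2} to a fixed compact $K\subset\subset\Omega$ in order to obtain \eqref{eq:41-2}--\eqref{eq:41-4}. The idea is to pick a cutoff $\chi\in C^\infty_c(\Omega)$ with $\chi\equiv 1$ in a neighborhood of $K$ and to replace the test functions $\psi$, $t$, $|t|^{q-2}t$ used in Section \ref{sec:no-vortices} by $\chi^2\psi$, $\chi^2 t$, $\chi^2|t|^{q-2}t$; the commutator terms generated by $\nabla\chi^2$ are of lower order on $\supp\nabla\chi\subset\subset\Omega$ and are absorbed using the estimates already proven on a slightly larger compact. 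I expect the main technical obstacle to be the localized version of the $L^q\to L^\infty$ bootstrap in Proposition \ref{prop:t-eps2}: each iteration increases $q$ but also forces the cutoff to be shrunk, so the estimate has to be organized as a Moser-type iteration on a nested sequence of compacts $K\subset\subset K_1\subset\subset K_2\subset\subset\cdots\subset\subset\Omega$. Once $\|t_\varepsilon\|_{L^\infty(K)}=O(\varepsilon^2)$ is established, the estimates \eqref{eq:41-2}--\eqref{eq:41-4} and the local $C^{1,\alpha}$ convergence \eqref{eq:41-3} follow exactly as in the conclusion of the proof of Theorem \ref{th:conv-degz}.
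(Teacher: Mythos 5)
Your plan diverges from the paper's in one place that creates a genuine gap, and in a couple of places where the route is merely different.

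The serious issue is in the $H^1$ step. You propose to replace the paper's Pohozaev-type argument by the decomposition $t_\varepsilon=s_\varepsilon+h_\varepsilon$, with $h_\varepsilon$ the harmonic extension of $t_\varepsilon|_{\partial\Omega}$ and $s_\varepsilon\in H^1_0(\Omega)$, and then to test \eqref{eq:ELN-t} against $s_\varepsilon$. But the resulting cross term $\varepsilon^{-2}\int_\Omega(2\alpha+\alpha_t t_\varepsilon)\,h_\varepsilon s_\varepsilon$ cannot be made $o(1)$: your own estimates give only $\|h_\varepsilon\|_{L^2(\Omega)}\le C\|t_\varepsilon|_{\partial\Omega}\|_{H^{-1/2}(\partial\Omega)}=O(\varepsilon)$ (not $o(\varepsilon)$, since \eqref{eq:43} is only $O(\varepsilon^2)$), so that $\varepsilon^{-2}\int|h_\varepsilon||s_\varepsilon|\le C\varepsilon^{-1}\|s_\varepsilon\|_2$, which after Young's absorption against $c_1\varepsilon^{-2}\|s_\varepsilon\|_2^2$ leaves an $O(1)$ remainder, not $o(1)$. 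Consequently you only get $\int_\Omega\bigl[|\nabla s_\varepsilon|^2+\varepsilon^{-2}s_\varepsilon^2\bigr]+\int_\Omega|\nabla\psi_\varepsilon|^2=O(1)$, whereas the chain of estimates you need downstream (and the analogue of \eqref{eq:53}) requires this to be $o(1)$; for instance the argument in the style of Lemma \ref{lem:w1.4-eps} uses $\|t_\varepsilon/\varepsilon\|_2=o(1)$ to run the proof by contradiction around \eqref{eq:60}. The paper instead multiplies \eqref{eq:EL-geps} by $V\cdot\nabla u_\varepsilon$ to obtain a Pohozaev identity (\eqref{eq:47}--\eqref{eq:49}) that bounds $\|\partial t_\varepsilon/\partial n\|_{L^2(\partial\Omega)}^2$ in terms of the energy; together with $\|t_\varepsilon|_{\partial\Omega}\|_{L^2}=O(\varepsilon)$ this makes the boundary term $\int_{\partial\Omega}t_\varepsilon\,\partial_n t_\varepsilon$ of order $\varepsilon\,(1+\text{energy})^{1/2}$, which is exactly what closes the bootstrap to $o(1)$. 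Your $s+h$ decomposition does not substitute for this: the normal derivative of $t_\varepsilon$ on $\partial\Omega$ must be controlled, and that information is not visible by testing with a function vanishing on the boundary.

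Two more remarks, less serious but worth noting. First, your claim that only Case~1 of Lemma \ref{lem:limt} arises is true when restricted to a compact $K\Subset\Omega$, but the $C^0(\overline\Omega)$ convergence in \eqref{eq:41-1} requires uniform convergence of $t_\varepsilon$ on the whole of $\overline\Omega$ (Proposition \ref{prop:lim-to-one}); the paper handles points near the boundary via Case~2, which still works with $\varepsilon$-dependent data because the rescaled boundary data $g_\varepsilon(y^\varepsilon+\varepsilon\,\cdot)$ converges to a constant in $\Gamma$ thanks to \eqref{eq:42}--\eqref{eq:43}. Also, the $L^\infty$ bound on $\psi_\varepsilon$ cannot be obtained "exactly as in Corollary \ref{rem:max}" because $g_\varepsilon(\partial\Omega)\not\subset\Gamma$; one must use Proposition \ref{prop:max} directly (the $\pm mt^2/2$ correction on the boundary is harmless since $t_\varepsilon$ is bounded there). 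Second, for \eqref{eq:41-4} the paper does not do a Moser iteration on nested compacts: after Lemma \ref{lem:w1.4-eps} it uses Fubini to pick a good slice $\Omega_r$ (\eqref{eq:68}), proves $\int_{\Omega_r}[|\nabla t_\varepsilon|^2+\varepsilon^{-2}t_\varepsilon^2]=O(\varepsilon^2)$ (Lemma \ref{lem:eps4}) and a uniform gradient bound (Lemma \ref{lem:infty-grad}) with a single $L^{5/2}$ step, and then obtains $\|t_\varepsilon\|_{L^\infty(K)}=O(\varepsilon^2)$ via Kato's inequality plus a comparison with the exponential barrier of \cite[Lemma 2]{bbh93} (see \eqref{eq:75}--\eqref{hi1}). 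This is cleaner than a Moser iteration on shrinking compacts and avoids the bookkeeping you anticipate.
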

The proof follows similar steps to those of \rsec{sec:no-vortices} and
part of the analysis carries over with slight modifications 
 to the current situation. This is the case for the analogous results to
\rlemma{lem:infty} and \rlemma{lem:limt} that we state in the next
proposition.
\begin{proposition}
  \label{prop:lim-to-one}
 We have $\|u_\varepsilon\|_\infty\leq R_0$ and $\lim_{\varepsilon\to0}t_\varepsilon=0$ uniformly on $\overline{\Omega}$. 
\end{proposition}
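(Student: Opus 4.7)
The plan is to mimic the proofs of Lemma~\ref{lem:infty} and Lemma~\ref{lem:limt} line by line, with the only substantive modification occurring in the boundary blow-up argument, where now $g_\varepsilon$ is no longer $\Gamma$-valued.

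First I would establish $\|u_\varepsilon\|_\infty\le R_0$ by the maximum principle argument of Lemma~\ref{lem:infty} applied to $|u_\varepsilon|^2$ on the set $E=\{|u_\varepsilon|>R_0\}$. The only new verification is that $|g_\varepsilon|\le R_0$ on $\partial\Omega$, which follows from assumption \eqref{eq:34}: it gives $\dist(g_\varepsilon,\Gamma)\le\delta_1$ on $\partial\Omega$, and since $\delta_1$ depends only on $\Gamma$ and $W$ we may shrink it, if necessary, so that $\delta_1<R_0-\max_{z\in\Gamma}|z|$ (using that $R_0>\max_{\Gamma}|z|$ strictly).

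For the uniform smallness of $t_\varepsilon$, I would argue by contradiction exactly as in Lemma~\ref{lem:limt}: supposing $t_{\varepsilon_n}(x_n)\to T\ne 0$ and rescaling via $\widetilde u_{\varepsilon_n}(x)=u_{\varepsilon_n}(x_n+\varepsilon_n x)$, split into the interior case $\dist(x_n,\partial\Omega)/\varepsilon_n\to\infty$ and the boundary case $\liminf \dist(x_n,\partial\Omega)/\varepsilon_n<\infty$. Case~1 uses only the equation and \eqref{eq:34}, so it transfers unchanged. In Case~2 the rescaled domains exhaust a half-plane $H$ and the main issue is to identify the limiting boundary value. Here the $H^1$-bound \eqref{eq:42} is decisive: by Sobolev embedding on the one-dimensional curve $\partial\Omega$ it yields a uniform $C^\alpha(\partial\Omega)$ bound for some $\alpha>0$, so the oscillation of $g_{\varepsilon_n}$ over an arc of length $O(\varepsilon_n)$ is $O(\varepsilon_n^\alpha)=o(1)$. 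Combined with \eqref{eq:44} and the continuity of $g$, this gives
\[
g_{\varepsilon_n}(x_n+\varepsilon_n\,\cdot\,)\to g(x_\infty)=:\gamma\in\Gamma
\]
uniformly on compact subsets of $\partial H$. Hence the limit $\widetilde u$ satisfies $\widetilde t\equiv 0$ and $\widetilde\varphi\equiv\Phi$ (constant) on $\partial H$, and the Case~2 argument of Lemma~\ref{lem:limt} applies verbatim: the bounded subharmonic functions $e^{k\widetilde v}$ and $e^{k\widetilde w}$ attain their maxima on $\partial H$, forcing $\widetilde t\equiv 0$ on $H$, in contradiction with the interior value $T\ne 0$.

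The only non-routine step is the identification of the boundary value in Case~2, where the $H^1$-control on $g_\varepsilon$ (upgraded to $C^\alpha$ on the one-dimensional curve $\partial\Omega$) is essential to convert pointwise limits into uniform convergence on the scale $\varepsilon_n$; every other ingredient is a direct transcription of the arguments already developed in Section~\ref{sec:no-vortices}.
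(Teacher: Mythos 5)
Your proposal is correct and takes essentially the route the paper has in mind (the paper only says that the proofs of Lemma~\ref{lem:infty} and Lemma~\ref{lem:limt} ``carry over with slight modifications''). You correctly identify the two places where the non-$\Gamma$-valued boundary data $g_\varepsilon$ enters: in the $L^\infty$ bound, where one must ensure $|g_\varepsilon|\le R_0$ on $\partial\Omega$, and in the half-plane blow-up of Case~2, where the limiting boundary trace must be identified. Shrinking $\delta_1$ so that $\delta_1<R_0-\max_\Gamma|z|$ is legitimate, since $\delta_1$ is a constant depending only on $\Gamma$ and $W$ and the only constraints on it (\eqref{eq:2}, \eqref{eq:11}) are monotone in the sense that they remain valid when $\delta_1$ is decreased; alternatively one could simply enlarge $R_0$, which is also harmless in \eqref{h5}.

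One fine point you pass over too quickly: your phrase ``the Case~2 argument applies verbatim'' is not quite accurate, because the step ``by Lemma~\ref{lem:grad} we have $L>0$'' uses $g$ being $\Gamma$-valued and so does not transfer literally. However, the ingredients you already invoked close this gap: by \eqref{eq:44}, $g_{\varepsilon_n}\to g$ uniformly on $\partial\Omega$, with $g$ $\Gamma$-valued, so $\dist(g_{\varepsilon_n},\Gamma)\to 0$ uniformly; combined with the gradient bound of Lemma~\ref{lem:grad}, this shows that if $\dist(x_n,\partial\Omega)/\varepsilon_n\to 0$ then $\dist(u_{\varepsilon_n}(x_n),\Gamma)\to 0$, contradicting $T\ne 0$, so indeed $L>0$ and $0$ is an interior point of the half-plane $H$. (Equivalently, one could observe that if $L=0$ the equi-Lipschitz bound for the rescaled maps up to $\partial H$ already forces $\widetilde u_{\varepsilon_n}(0)\to\gamma\in\Gamma$, yielding the same contradiction.) With this small clarification, the proof is complete.
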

Next we turn to an $H^1$-convergence result, generalizing \rprop{prop:H1}.
\begin{proposition}
\label{prop:H1bis}
  We have 
 \begin{equation}
   \label{eq:32}
\ue\to u_0 \text{ in } H^1(\Omega)\text{ and
} E_\varepsilon(\ue)\to \frac{1}{2}\int_\Omega|\nabla u_0|^2.
  \end{equation}
\end{proposition}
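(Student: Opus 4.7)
The plan is to mimic the proof of \rprop{prop:H1} with two adjustments reflecting the $\varepsilon$-dependent boundary condition: replacing the limiting harmonic extension $\zeta$ by an $\varepsilon$-dependent one $\zeta_\varepsilon$, and compensating for the fact that $t_\varepsilon$ does not vanish on $\partial\Omega$ by subtracting an auxiliary boundary-layer profile $h_\varepsilon$. The main obstacle is the choice of $h_\varepsilon$: to close the bootstrap below it must be small both in $H^1(\Omega)$ and in $L^2(\Omega)/\varepsilon$, and the naive harmonic extension of $t_\varepsilon|_{\partial\Omega}$ yields only $\|h_\varepsilon/\varepsilon\|_{L^2(\Omega)}=O(1)$, which is insufficient.

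First I analyze the boundary data. From \eqref{eq:42}--\eqref{eq:44} together with non-degeneracy \eqref{eq:117}, I extract a boundary phase $\varphi_{0,\varepsilon}$ with $\Pi\circ g_\varepsilon=\tau(e^{\im\varphi_{0,\varepsilon}})$, bounded in $H^1(\partial\Omega)$ and converging to $\varphi_0$ in every $H^s(\partial\Omega)$, $s<1$; in parallel, $\|t_\varepsilon\|_{L^2(\partial\Omega)}=O(\varepsilon)$ while $\|t_\varepsilon\|_{H^1(\partial\Omega)}=O(1)$. I take $\zeta_\varepsilon$ to be the harmonic extension of $\varphi_{0,\varepsilon}$ (so that $\zeta_\varepsilon\to\zeta$ in $H^1(\Omega)$), and I set $h_\varepsilon(x):=\tilde t_\varepsilon(x)\,\chi(\dist(x,\partial\Omega)/\varepsilon)$, where $\tilde t_\varepsilon$ is the extension of $t_\varepsilon|_{\partial\Omega}$ to a tubular neighborhood of $\partial\Omega$ obtained by taking it constant along the normal, and $\chi$ is a smooth cutoff with $\chi(0)=1$ supported in $[0,1)$. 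A Fubini slicing in the boundary layer then gives $\|h_\varepsilon\|_\infty\to 0$, $\|h_\varepsilon\|_{H^1(\Omega)}^2=O(\varepsilon)$ and, crucially, $\|h_\varepsilon/\varepsilon\|_{L^2(\Omega)}^2=O(\varepsilon)$. I work with $\psi_\varepsilon:=\varphi_\varepsilon-\zeta_\varepsilon$ and $T_\varepsilon:=t_\varepsilon-h_\varepsilon$, both in $H^1_0(\Omega)$; the $L^\infty$-bound on $\psi_\varepsilon$ comes from \rprop{prop:max} and the boundedness of $\varphi_{0,\varepsilon}$.

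Following the proof of \rprop{prop:H1}, I test \eqref{eq:ELN-phi} against $\psi_\varepsilon$ and \eqref{eq:ELN-t} against $T_\varepsilon$. Using $|a-1|,|b|=O(|t|)\to0$ by \rprop{prop:lim-to-one}, together with the $H^1$-boundedness of $\zeta_\varepsilon$, the first test yields $\int_\Omega|\nabla\psi_\varepsilon|^2\le o(1)+C\int_\Omega t_\varepsilon^2/\varepsilon^2$. Splitting $t_\varepsilon=T_\varepsilon+h_\varepsilon$ in \eqref{eq:ELN-t}, integrating the $\Delta h_\varepsilon$ contribution by parts (the boundary term vanishes since $T_\varepsilon\in H^1_0$) and absorbing the resulting $h_\varepsilon$ cross-terms via the sharp estimates above, the second test produces $\int_\Omega|\nabla T_\varepsilon|^2+\int_\Omega T_\varepsilon^2/\varepsilon^2\le o(1)+o(1)\int_\Omega|\nabla\varphi_\varepsilon|^2$. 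Since $\int|\nabla\varphi_\varepsilon|^2\le 2\int|\nabla\psi_\varepsilon|^2+C$ and $\int t_\varepsilon^2/\varepsilon^2\le 2\int T_\varepsilon^2/\varepsilon^2+o(1)$, these two inequalities close into a bootstrap forcing $\|\nabla\psi_\varepsilon\|_2\to 0$ and $\|T_\varepsilon\|_{H^1}\to 0$. Combined with $\zeta_\varepsilon\to\zeta$ in $H^1$ and $h_\varepsilon\to 0$ in $H^1$, this yields $\varphi_\varepsilon\to\zeta$ and $t_\varepsilon\to 0$ strongly in $H^1(\Omega)$, whence via \eqref{ha2} and \eqref{eq:36} the strong $H^1$-convergence $u_\varepsilon\to u_0$; the energy convergence $E_\varepsilon(u_\varepsilon)\to\tfrac{1}{2}\int_\Omega|\nabla u_0|^2$ then follows because $\int W(u_\varepsilon)/\varepsilon^2=O(\int t_\varepsilon^2/\varepsilon^2)\to 0$.
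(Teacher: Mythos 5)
Your proof is correct, but it takes a genuinely different route from the paper's for the one step that is delicate here: controlling the boundary term $\int_{\partial\Omega}t_\varepsilon\,\partial t_\varepsilon/\partial n$ that appears when one tests \eqref{eq:ELN-t}. The paper does not attempt to make the test function vanish on $\partial\Omega$; instead it keeps $t_\varepsilon$ as the test function and bounds the boundary integral directly, via the Pohozaev-type identity \eqref{eq:47}--\eqref{eq:48}: this yields $\int_{\partial\Omega}|\partial u_\varepsilon/\partial n|^2 \le C(1+E_\varepsilon)$, and then Cauchy--Schwarz together with $\|t_\varepsilon\|_{L^2(\partial\Omega)}=O(\varepsilon)$ gives the bound \eqref{eq:50}, $\int_{\partial\Omega}t_\varepsilon\,\partial t_\varepsilon/\partial n\le C\varepsilon(1+\int_\Omega[|\nabla t_\varepsilon|^2+|\nabla\psi_\varepsilon|^2+t_\varepsilon^2/\varepsilon^2])^{1/2}$. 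You instead construct a boundary-layer correction $h_\varepsilon$ and test against $T_\varepsilon=t_\varepsilon-h_\varepsilon\in H^1_0(\Omega)$ so that no boundary term appears at all. The crucial point you correctly identify is that the layer must be localized at scale $\varepsilon$ rather than spread out by a harmonic extension: the $\varepsilon$-cutoff upgrades the useless $\|h_\varepsilon/\varepsilon\|_{L^2}=O(1)$ to $\|h_\varepsilon/\varepsilon\|_{L^2}^2=O(\varepsilon)$, and simultaneously gives $\|\nabla h_\varepsilon\|_{L^2}^2=O(\varepsilon)$; these, together with $\|h_\varepsilon\|_\infty\to0$ (which follows from the uniform $C^{0,1/2}(\partial\Omega)$ bound plus $\|t_\varepsilon\|_{L^2(\partial\Omega)}=O(\varepsilon)$), are exactly what is needed to absorb the cross-terms and close the same bootstrap as in the paper. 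Both methods work. The paper's Pohozaev route has the advantage of also producing the intermediate estimate \eqref{eq:50}, which is reused verbatim in the proof of Lemma~\ref{lem:w1.4-eps} (see \eqref{eq:56}); your construction is self-contained and more elementary for the present proposition, but you would need a separate argument if you wanted to carry your approach through the rest of the section.
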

\begin{proof}
We define the pair of functions $t_\varepsilon$ and $\varphi_\varepsilon$ associated with $u_\varepsilon$
via \eqref{eq:36}. We let $\zeta_\varepsilon$ denote the harmonic extension of
$\varphi_{\varepsilon |\partial\Omega}$ to $\Omega$. 
Analogously to the proof of \rprop{prop:H1}, we then write
  $\varphi_\varepsilon=\psi_\varepsilon+\zeta_\varepsilon$, with $\psi_\varepsilon=0$ on $\partial\Omega$.
  
   Clearly, 
  \eqref{eq:44}  implies that, possibly after subtracting  suitable integer multiples of $2\pi$ from the $\va_\ve$'s, we have $\va_{\varepsilon |\partial\Omega}\to \va_0$ in $H^{1/2}(\p\O)$, and thus 
  \begin{equation}
    \label{eq:46}
    \lim_{\varepsilon\to0} \int_\Omega |\nabla\zeta_\varepsilon|^2=\int_{\O} |\nabla\zeta|^2.
  \end{equation}
  
Repeating the calculations
  at the beginning of the proof of  \rprop{prop:H1}, with  $\zeta_\varepsilon$
  playing the role of $\zeta$, yields, analogously to \eqref{eq:13}, 
\begin{equation}
  \label{eq:41}
 \int_\Omega |\nabla\psi_\varepsilon|^2\le C\int_\Omega\frac{t_\varepsilon^2}{\varepsilon^2}.
\end{equation}
Now, since in the current setting $t_\varepsilon$ is not identically zero on
$\partial\Omega$, multiplying \eqref{eq:18} by $t_\varepsilon$, integrating and using \eqref{eq:1-1}
yields
\be
  \label{eq:45}
  \begin{aligned}
  \int_\Omega \left[|\nabla t_\ve|^2+(2\alpha+\alpha_t t_\ve)\frac{t^2_\ve}{\varepsilon^2}\right]=
  %&
  \int_{\partial\Omega} t_\varepsilon\frac{\partial t_\varepsilon}{\partial n}
 % \\&
  +\int_\Omega  c\, t_\ve\left(|\nabla\psi_\ve|^2+2\nabla\psi_\ve\cdot\nabla\zeta_\ve+|\nabla\zeta_\ve|^2\right)
%\int_\Omega |\nabla t_\varepsilon|^2+(2\alpha+\alpha_t t_\varepsilon)\frac{t_\varepsilon^2}{\varepsilon^2}\\=\int_{\partial\Omega} t_\varepsilon\frac{\partial t_\varepsilon}{\partial n}+ \int_\Omega c|\nabla\psi_\varepsilon|^2t_\varepsilon+\int_\Omega ct_\varepsilon\left(2\nabla\psi_\varepsilon\nabla\zeta_\varepsilon+|\nabla\zeta_\varepsilon|^2\right),
\end{aligned},
\ee
 where $n$ stands for the outward normal on $\partial\Omega$. In order to deal
 with the boundary term in \eqref{eq:45}, we use a Pohozaev identity
 type argument, as in \cite[Proposition 3]{bbh93}. So let
 $V=(V_1,V_2)$ be a smooth vector field on $\Omega$ satisfying $V=n$ on
 $\partial\Omega$. We consider the vector field
 $V\cdot\nabla\ue=(V\cdot \na (u_\ve)_1, V\cdot \na (u_\ve)_2)$. 
 We take the scalar product of both sides of the equation in \eqref{eq:EL-geps} and $V\cdot\nabla\ue$ and
 integrate. A direct computation (see \cite{bbh93}) gives
 \be
  \label{eq:47}
 \begin{aligned}
     \int_\Omega
   (\Delta\ue)\cdot(V\cdot\nabla\ue)=&\int_\Omega\left[\frac 12\, \div V\,|\nabla\ue|^2-
(V_1)_{x_1}|(\ue)_{x_1}|^2-(V_2)_{x_2}|(\ue)_{x_2}|^2\right]\\
  &-\int_\Omega \left((V_1)_{x_2}+(V_2)_{x_1}\right)(\ue)_{x_1}\cdot(\ue)_{x_2}
  %\\&
  +\frac{1}{2}\int_{\partial\Omega}\left(\left|\frac{\partial\ue}{\partial n}\right|^2-\left|\frac{\partial g_\varepsilon}{\partial\sigma}\right|^2\right).
 \end{aligned}
 \ee
 (Here, $\p\sigma$ stands for the tangential derivative on $\p\O$.)
 
On the other hand, we have
\begin{equation}
\label{eq:48}
\begin{aligned}
  \frac{1}{\varepsilon^2}\int_\Omega \nabla W(\ue)\cdot (V\cdot\nabla\ue)&= \frac{1}{\varepsilon^2}\int_\Omega V\cdot \nabla\big(W(\ue)\big)
  %\\&
  =\frac{1}{\varepsilon^2}\left(-\int_\Omega(\div V)\, W(\ue)+ \int_{\partial\Omega} W(\ue)\right).
\end{aligned}
\end{equation}
Equating \eqref{eq:47} and \eqref{eq:48}, using \eqref{eq:42},
\eqref{eq:43}, \eqref{eq:46} and \eqref{ha3} 
yields
\begin{equation}
  \label{eq:49}
\begin{aligned}
 \int_{\partial\Omega}\left|\frac{\partial t_\varepsilon}{\partial n}\right|^2\leq &C_1\int_{\partial\Omega}\left|\frac{\partial\ue}{\partial n}\right|^2\leq
 C_2\left(1+\int_\Omega  \left[ |\nabla\ue|^2+\frac{W(\ue)}{\varepsilon^2}\right]\right)
 %\\
 \leq 
% &
 C_3\left(1+\int_\Omega \left[   |\nabla t_\varepsilon|^2+|\nabla\psi_\varepsilon|^2+\frac{t_\varepsilon^2}{\varepsilon^2}\right]\right).
\end{aligned}
\end{equation}
By \eqref{eq:49}, the Cauchy-Schwarz inequality and \eqref{eq:43} we obtain
\begin{equation}
  \label{eq:50}
\int_{\partial\Omega}t_\varepsilon\frac{\partial t_\varepsilon}{\partial n}\leq C\varepsilon \left(\int_{\partial\Omega}\left|\frac{\partial t_\varepsilon}{\partial n}\right|^2\right)^{1/2}\leq 
C'\varepsilon\left(1+\int_\Omega  \left[ |\nabla t_\varepsilon|^2+|\nabla\psi_\varepsilon|^2+\frac{t_\varepsilon^2}{\varepsilon^2}\right]\right)^{1/2}.
\end{equation}
Substituting \eqref{eq:50} in \eqref{eq:45} leads to
\begin{equation}
\label{eq:52}
\begin{aligned}
\int_\Omega \left[|\nabla t_\varepsilon|^2+c_1\frac{t_\varepsilon^2}{\varepsilon^2}\right]\leq & C\varepsilon\left(1+\int_\Omega   \left[|\nabla t_\varepsilon|^2+|\nabla\psi_\varepsilon|^2+\frac{t_\varepsilon^2}{\varepsilon^2}\right]\right)^{1/2}
%\\&
+C\|t_\varepsilon\|_\infty\left(1+\int_\Omega|\nabla\psi_\varepsilon|^2\right).
\end{aligned}
\end{equation}
Combining \eqref{eq:41}, \eqref{eq:52} and \rprop{prop:lim-to-one}  we get
\begin{equation}
  \label{eq:53}
\int_\Omega \left[|\nabla t_\varepsilon|^2+\frac{t_\varepsilon^2}{\varepsilon^2}\right]=o(1).
\end{equation}
Using \eqref{eq:53} in \eqref{eq:41} finally gives 
 \begin{equation}
\label{eq:54}
  \int_\Omega |\nabla\psi_\varepsilon|^2=o(1),
 \end{equation}
and \eqref{eq:32} follows from \eqref{eq:53}--\eqref{eq:54} and \eqref{eq:46}. 
\end{proof}
Analogously to \rlemma{lem:w1.4}, and in particular to \eqref{eq:22}, we have:
\begin{lemma}
  \label{lem:w1.4-eps}
  $\psi_\varepsilon\to 0$ in $W^{1,4}(\Omega)$.
\end{lemma}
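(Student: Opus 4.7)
The plan is to mirror the proof of \rlemma{lem:w1.4}, with two substantive modifications: $\zeta$ now depends on $\varepsilon$, and $t_\varepsilon$ no longer vanishes on $\partial\Omega$. As in the proof of \rprop{prop:H1bis}, I write $\varphi_\varepsilon = \psi_\varepsilon + \zeta_\varepsilon$ with $\psi_\varepsilon = 0$ on $\partial\Omega$, and rewrite \eqref{eq:ELN-phi} exactly as in \eqref{eq:23} with $\zeta_\varepsilon$ in place of $\zeta$. The target remains $\int_\Omega|\nabla\psi_\varepsilon|^4 = o(1)$.

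The preliminary step, absent from the proof of \rlemma{lem:w1.4}, is to secure a uniform $W^{1,4}(\Omega)$ bound on $\zeta_\varepsilon$. Since \eqref{eq:34} forces $g_\varepsilon(\partial\Omega) \subset \Gamma_{\delta_1}$, the decomposition \eqref{ha1} is a smooth operation, and \eqref{eq:42} transfers to $\|\varphi_{\varepsilon|\partial\Omega}\|_{H^1(\partial\Omega)} \leq C$. Consequently $\zeta_\varepsilon$, being the harmonic extension of this trace, is bounded in $H^{3/2}(\Omega)$, and the two-dimensional Sobolev embedding $H^{3/2}(\Omega) \hookrightarrow W^{1,4}(\Omega)$ delivers the desired bound on $\nabla \zeta_\varepsilon$.

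I then split $\psi_\varepsilon = \psi_1 + \psi_2 + \psi_3$ exactly as in \eqref{eq:24} (with $\zeta_\varepsilon$ in place of $\zeta$). The estimates \eqref{eq:25} and \eqref{eq:27} for $\psi_1$ and $\psi_2$ transcribe almost verbatim, using \rprop{prop:lim-to-one} for $\|t_\varepsilon\|_\infty = o(1)$, \eqref{eq:54} for $\|\nabla\psi_\varepsilon\|_2 = o(1)$, and the uniform $W^{1,4}$-bound on $\zeta_\varepsilon$ just established. For $\psi_3$, the key input is a self-improving bound of the form $\|t_\varepsilon\|_4 \leq C\varepsilon\|\nabla\varphi_\varepsilon\|_4 + o(\varepsilon)$, which one derives by multiplying \eqref{eq:ELN-t} by $t_\varepsilon$ and carefully tracking the new boundary contribution $\int_{\partial\Omega} t_\varepsilon\,\partial t_\varepsilon/\partial n$. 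This boundary term is controlled by the Pohozaev argument already used in \eqref{eq:47}--\eqref{eq:50}, combined with \eqref{eq:43}. Summing the three estimates yields $\|\nabla\psi_\varepsilon\|_4 \leq o(1)\cdot(\|\nabla\psi_\varepsilon\|_4 + 1)$, from which the claim follows by absorption (using \rrem{rem:e0} to pass from small $\varepsilon$ to all $\varepsilon$).

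The main obstacle will be the refined $L^4$-bound on $t_\varepsilon$: the clean absorbing inequality \eqref{eq:28} relied crucially on $t$ vanishing on $\partial\Omega$, so here one must verify that the Pohozaev-controlled boundary contribution produces a remainder whose size is compatible with the $1/\varepsilon^2$ factor appearing in the elliptic estimate for $\psi_3$. The fact that \eqref{eq:43} provides $\|t_\varepsilon\|_{L^2(\partial\Omega)} = O(\varepsilon)$ (via \eqref{eq:117}) and that \eqref{eq:53} makes the right-hand side of the Pohozaev inequality \eqref{eq:49} bounded, are the two ingredients that make the remainder small enough.
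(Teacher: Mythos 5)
Your preliminary step (uniform $W^{1,4}$ bound on $\zeta_\varepsilon$ via $H^{3/2}\hookrightarrow W^{1,4}$) and your treatment of $\psi_1,\psi_2$ match the paper exactly. The gap is in $\psi_3$: the ``self-improving bound'' $\|t_\varepsilon\|_4\le C\varepsilon\|\nabla\varphi_\varepsilon\|_4+o(\varepsilon)$ that you propose to derive by multiplying \eqref{eq:ELN-t} by $t_\varepsilon$ is simply not available. That multiplication (this is \eqref{eq:56}) gives
\begin{equation*}
  \frac{c_1}{\varepsilon^2}\|t_\varepsilon\|_2^2\le C\|t_\varepsilon\|_2\|\nabla\varphi_\varepsilon\|_4^2+C\varepsilon,
\end{equation*}
where the $C\varepsilon$ comes from the Pohozaev-controlled boundary integral. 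Solving the resulting quadratic inequality yields only $\|t_\varepsilon\|_2\le C\varepsilon^2\|\nabla\varphi_\varepsilon\|_4^2+C\varepsilon^{3/2}$. The new term $C\varepsilon^{3/2}$ is much larger than $\varepsilon^2$, and after plugging into the elliptic estimate $\|\nabla\psi_{3,\varepsilon}\|_4\le C\varepsilon^{-2}\|t_\varepsilon\|_2\|t_\varepsilon\|_4$ (or the $L^{4/3}$ variant) you are left with a term of order $o(1)\,\varepsilon\,\|\nabla\varphi_\varepsilon\|_4^3$. Since the only a priori bound on $\|\nabla\varphi_\varepsilon\|_4$ is of the form $C\varepsilon^{-1/2}$ (interpolating \eqref{eq:32} with \eqref{eq:grad}), this cubic term can be as large as $C\varepsilon^{-1/2}$, and absorption fails. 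So the two ingredients you cite (\eqref{eq:43} and the boundedness in \eqref{eq:49}) do \emph{not} by themselves make the remainder small enough.

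The missing idea is the dichotomy/contradiction argument that establishes \eqref{eq:60}: $\|t_\varepsilon\|_2\|\nabla\varphi_\varepsilon\|_4^2\le\varepsilon$ for small $\varepsilon$. One argues by contradiction: if $\|t_{\varepsilon_n}\|_2\|\nabla\varphi_{\varepsilon_n}\|_4^2>\varepsilon_n$ along a sequence, then the $C\varepsilon_n$ term in \eqref{eq:56} is dominated and can be absorbed, so the original argument of \rlemma{lem:w1.4} applies verbatim (now using \eqref{eq:51}) and yields \eqref{eq:22}, hence $\|\nabla\varphi_{\varepsilon_n}\|_4\le A$; but then the assumed lower bound forces $\|t_{\varepsilon_n}/\varepsilon_n\|_2\ge 1/A^2$, contradicting \eqref{eq:53}. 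Once \eqref{eq:60} is in hand, \eqref{eq:56} delivers $\|t_\varepsilon\|_2^2\le C\varepsilon^3$ (so \eqref{eq:67}), and the estimate $\|\nabla\psi_{3,\varepsilon}\|_4\le C\|\Delta\psi_{3,\varepsilon}\|_{4/3}=o(\varepsilon^{1/4})$ closes the argument \emph{without} any residual dependence on $\|\nabla\varphi_\varepsilon\|_4$. This self-referential step is the essential new ingredient over \rlemma{lem:w1.4}, and your sketch omits it.
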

\begin{proof}
We first notice that since $\{\varphi_{\varepsilon\,|\partial\Omega}\}$ is bounded in
$H^1(\partial\Omega)$ by \eqref{eq:42}, the family $\{\zeta_\varepsilon\}$ is bounded in
$H^{3/2}(\Omega)$. Since $H^{3/2}(\Omega)\hookrightarrow W^{1,4}(\Omega)$, we get:
\begin{equation}
  \label{eq:51}
 \{\zeta_\varepsilon\}  \text{ is bounded in }W^{1,4}(\Omega).
\end{equation}
Arguing as in the proof of
\rlemma{lem:w1.4} we use \eqref{eq:23} to split
\begin{equation*}
  \psi_\varepsilon=\psi_{1,\varepsilon}+\psi_{2,\varepsilon}+\psi_{3,\varepsilon}.
\end{equation*}
 The same arguments that led to \eqref{eq:25} and \eqref{eq:27}  (with $p=4$) yield
\begin{equation}
  \label{eq:58}
 \|\nabla\psi_{2,\varepsilon}\|_4\leq C\|t_\varepsilon\|_\infty\left(\|\nabla\psi_\varepsilon\|_4+1\right)
\end{equation}
and
\begin{equation}
\label{eq:55}
  \|\nabla\psi_{1,\varepsilon}\|_4\leq C\|\psi_{1,\varepsilon}\|_{W^{2,4/3}}\leq o(1)\cdot\|t_\varepsilon\|_\infty\|\nabla\psi_\varepsilon\|_4.
  \end{equation}
  
 The only difference with respect to the case where $g_\varepsilon\equiv g$ stands in the estimate of
 $\psi_{3,\varepsilon}$. Multiplying \eqref{eq:ELN-t} by $t_\varepsilon$ and integrating
 gives
\begin{equation}
   \label{eq:56}
   \|\nabla t_\varepsilon\|_2^2+\frac{c_1}{\varepsilon^2}\|t_\varepsilon\|_2^2\leq \int_\Omega
   c\, t_\varepsilon|\nabla\varphi_\varepsilon|^2+\int_{\partial\Omega} t_\varepsilon\frac{\partial t_\varepsilon}{\partial n}\leq C\|t_\varepsilon\|_2\|\nabla\varphi_\varepsilon\|_4^2+C\varepsilon,
  \end{equation}
 where in the last inequality we used the Cauchy-Schwarz inequality
 and \eqref{eq:50} combined with \eqref{eq:53}--\eqref{eq:54}.

Next we claim that
\begin{equation}
  \label{eq:60}
\|t_\varepsilon\|_2\|\nabla\varphi_\varepsilon\|_4^2\leq \varepsilon,
\end{equation}
 for sufficiently small $\varepsilon$.
Indeed, arguing by contradiction, assume that \eqref{eq:60} does not hold,
i.e., for a sequence $\varepsilon_n\to0$ we have
\begin{equation}
\label{eq:61}
  \|t_{\varepsilon_n}\|_2\|\nabla\varphi_{\varepsilon_n}\|_4^2>\varepsilon_n.
\end{equation}
Then,  from \eqref{eq:56} we get that 
\begin{equation*}
  \frac{c_1}{\varepsilon_n^2}\|t_{\varepsilon_n}\|_2^2\leq C\|t_{\varepsilon_n}\|_2\|\nabla\varphi_{\varepsilon_n}\|_4^2,
\end{equation*}
 and the argument of the proof of \rlemma{lem:w1.4} applies, so
 thanks to \eqref{eq:51} we get, as in \eqref{eq:31}, that
 \begin{equation}
   \label{eq:57}
  \|\nabla\psi_{3,\varepsilon_n}\|_4\leq \frac{C}{\varepsilon_n^2}\cdot
  o(\varepsilon_n)\cdot\varepsilon_n\|\nabla\varphi_{\varepsilon_n}\|_4=o(1)\cdot\left(\|\nabla\psi_{\varepsilon_n}\|_4+1\right). 
 \end{equation}
 From \eqref{eq:58},\eqref{eq:55} and
 \eqref{eq:57} we obtain that \eqref{eq:22} holds, and therefore
 \begin{equation}
   \label{eq:62}
\|\nabla\varphi_{\varepsilon_n}\|_4\leq A,
 \end{equation}
for some constant $A>0$.
It follows from \eqref{eq:62} and \eqref{eq:61} that 
\begin{equation*}
  \left\|\frac{t_{\varepsilon_n}}{\varepsilon_n}\right\|_2\geq \frac{1}{A^2},
\end{equation*}
 which contradicts \eqref{eq:53}.

Using \eqref{eq:60} in \eqref{eq:56} gives
\begin{equation}
   \label{eq:64}
   \|\nabla t_\varepsilon\|_2^2+\frac{c_1}{\varepsilon^2}\|t_\varepsilon\|_2^2\leq C\varepsilon,
  \end{equation}
which implies, in particular, that
\begin{equation}
\label{eq:67}
 \int_\Omega |t_\varepsilon|^q=o(1)\int_\Omega t_\varepsilon^2 =o(\varepsilon^3),
\end{equation}
for any $q>2$.
By \eqref{eq:67}, Sobolev embedding and elliptic estimates we obtain
\begin{equation}
  \label{eq:66}
 \|\nabla\psi_{3,\varepsilon}\|_4\leq C\|\Delta\psi_{3,\varepsilon}\|_{4/3}\leq
 C\left(\int_\Omega\frac{t_\varepsilon^{8/3}}{\varepsilon^{8/3}}\right)^{3/4}\leq \frac{C}{\varepsilon^2}\cdot o(\varepsilon^{9/4})=o(\varepsilon^{1/4}). 
\end{equation}
Combining \eqref{eq:58}--\eqref{eq:55} with \eqref{eq:66} we are led
to
\begin{equation*}
  \|\nabla\psi_\varepsilon\|_4\leq o(1)\cdot\left(\|\nabla\psi_\varepsilon\|_4+1\right),
\end{equation*}
 implying that $ \|\nabla\psi_\varepsilon\|_4=o(1)$, as claimed.
\end{proof}
We next prove local estimates in $\Omega$. It suffices to consider a
sequence $\varepsilon_n\to 0$, but for simplicity we will drop the
subscript $n$. 

Fix some small $r_0>0$, depending on $\O$, such that the nearest point projection onto $\p\O$ is smooth in the set $\{ x\in\O;\, \dist (x, \p\O)<r_0\}$. Set, for $0<r<r_0$, $\Omega_r:=\{x\in \Omega;\,\dist(x,\partial\Omega)>r\}$, which is a smooth domain. Using
\eqref{eq:64} and the Fubini theorem we can find some  $r=r_\ve$
such that $r_0/2<r<r_0$ and 
\begin{equation}
  \label{eq:68}
\int_{\partial\Omega_r} \left[|\nabla t_\varepsilon|^2+\frac{t_\varepsilon^2}{\varepsilon^2}\right]\leq C\varepsilon.
\end{equation}

For such $r$, we claim the following.
\begin{lemma}
  \label{lem:eps4}
We have
  \begin{equation}
    \label{eq:69}
\int_{\Omega_r}\left[|\nabla t_\varepsilon|^2+\frac{t_\varepsilon^2}{\varepsilon^2}\right]\leq C\varepsilon^2.
  \end{equation}
 \end{lemma}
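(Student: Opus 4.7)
The idea is to repeat the $L^{2}$-energy identity for $t_\varepsilon$ used in \rprop{prop:H1bis}, but now on the smaller domain $\Omega_r$ (for the good $r=r_\varepsilon$ given by \eqref{eq:68}), and to gain the extra power of $\varepsilon$ from two sources: (i) the improved trace bound \eqref{eq:68} on $\partial\Omega_r$, and (ii) the global $W^{1,4}$-bound on $\varphi_\varepsilon$ coming from \rlemma{lem:w1.4-eps} and \eqref{eq:51}.

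Concretely, I multiply equation \eqref{eq:18} by $t_\varepsilon$ and integrate on $\Omega_r$; combined with \eqref{eq:1-1} this yields
\begin{equation*}
\int_{\Omega_r}\left[|\nabla t_\varepsilon|^{2}+c_1\frac{t_\varepsilon^{2}}{\varepsilon^{2}}\right]\le \int_{\partial\Omega_r}t_\varepsilon\frac{\partial t_\varepsilon}{\partial n}+\int_{\Omega_r}c\,t_\varepsilon|\nabla\varphi_\varepsilon|^{2}.
\end{equation*}
The boundary term is immediately handled by Cauchy--Schwarz and \eqref{eq:68}: since $\int_{\partial\Omega_r}t_\varepsilon^{2}\le C\varepsilon^{3}$ and $\int_{\partial\Omega_r}|\nabla t_\varepsilon|^{2}\le C\varepsilon$, one gets
\begin{equation*}
\left|\int_{\partial\Omega_r}t_\varepsilon\frac{\partial t_\varepsilon}{\partial n}\right|\le (C\varepsilon^{3})^{1/2}(C\varepsilon)^{1/2}=C\varepsilon^{2}.
\end{equation*}

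For the interior term I use the weighted Young's inequality with parameter $\lambda=c_1/2$, \eqref{eq:1-2} and the $L^{4}$-bound for $\nabla\varphi_\varepsilon$:
\begin{equation*}
\int_{\Omega_r}|c|\,|t_\varepsilon|\,|\nabla\varphi_\varepsilon|^{2}\le \frac{c_1}{2}\int_{\Omega_r}\frac{t_\varepsilon^{2}}{\varepsilon^{2}}+C\varepsilon^{2}\int_{\Omega_r}|\nabla\varphi_\varepsilon|^{4}\le \frac{c_1}{2}\int_{\Omega_r}\frac{t_\varepsilon^{2}}{\varepsilon^{2}}+C\varepsilon^{2}.
\end{equation*}
Here I rely on the fact that $\{\nabla\varphi_\varepsilon\}$ is bounded in $L^{4}(\Omega)$: $\varphi_\varepsilon=\psi_\varepsilon+\zeta_\varepsilon$, with $\nabla\psi_\varepsilon\to 0$ in $L^{4}(\Omega)$ by \rlemma{lem:w1.4-eps} and $\{\zeta_\varepsilon\}$ bounded in $W^{1,4}(\Omega)$ by \eqref{eq:51}. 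Absorbing the first term on the right into the left-hand side proves \eqref{eq:69}.

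The whole argument is quite short; the only slightly delicate point is that the $L^{4}$-bound on $\nabla\varphi_\varepsilon$ must indeed be $O(1)$ (not degenerating with $\varepsilon$), which is exactly what \rlemma{lem:w1.4-eps} and the $H^{1}(\partial\Omega)$-bound \eqref{eq:42} give. Everything else is bookkeeping: checking that the choice of $r_\varepsilon$ via Fubini made earlier yields the $C\varepsilon$ boundary bound, and that the constants in the Young inequality can be chosen so that the absorption works uniformly in $\varepsilon$.
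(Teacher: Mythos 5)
Your proof is correct and follows essentially the same strategy as the paper: multiply \eqref{eq:ELN-t} by $t_\varepsilon$, integrate over $\Omega_r$, control the boundary term via \eqref{eq:68} and Cauchy--Schwarz to get $C\varepsilon^2$, and control the interior term via the $L^4$-bound on $\nabla\varphi_\varepsilon$ from \rlemma{lem:w1.4-eps} and \eqref{eq:51}. The only cosmetic difference is that you use weighted Young's inequality to absorb the $t_\varepsilon^2/\varepsilon^2$-term directly, whereas the paper first derives $\int_{\Omega_r}[|\nabla t_\varepsilon|^2+c_1 t_\varepsilon^2/\varepsilon^2]\leq C(\int_{\Omega_r}t_\varepsilon^2)^{1/2}+C\varepsilon^2$ and then solves the resulting quadratic inequality.
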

\begin{proof}
  By \eqref{eq:68} and the Cauchy-Schwarz inequality we have
  \begin{equation}
    \label{eq:59}
     \left|\int_{\partial\Omega_r}t_\varepsilon\frac{\partial t_\varepsilon}{\partial n}\right|\leq C\varepsilon^{1/2}\cdot\varepsilon^{3/2}=C\varepsilon^2.
  \end{equation}
 Similarly to the proof of \eqref{eq:56}, we multiply \eqref{eq:ELN-t}
 by $t_\varepsilon$ and 
 integrate by parts on $\Omega_r$. For the boundary integral we 
 use the improved bound \eqref{eq:59} and to bound
 $\|\varphi_\varepsilon\|_4$ we use 
 \rlemma{lem:w1.4-eps}. This yields 
 \begin{equation}
   \label{eq:65}
 \int_{\Omega_r}\left[|\nabla t_\varepsilon|^2+c_1\frac{t_\varepsilon^2}{\varepsilon^2}\right]\leq C\left(\int_{\Omega_r} t_\varepsilon^2\right)^{1/2}+C\varepsilon^2,
 \end{equation}
 which clearly implies \eqref{eq:69}.
\end{proof}
\begin{lemma}
  \label{lem:infty-grad}
  We have
  \begin{equation}
    \label{eq:70}
    \|\nabla\ue\|_{L^\infty(\Omega_r)}\leq C.
  \end{equation}
\end{lemma}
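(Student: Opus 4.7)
The plan is to adapt the Moser-type $L^q$-iteration of \rprop{prop:t-eps2} to the present local setting, with \rlemma{lem:eps4} providing the initial $L^2$ input. First, I would use Fubini together with \eqref{eq:64} to select a slightly smaller radius $\tilde r\in(r/2,r)$ such that the boundary integral $\int_{\partial\Omega_{\tilde r}}[|\nabla t_\ve|^2+t_\ve^2/\ve^2]$ is $O(\ve)$; then the Pohozaev-type argument used to prove \rlemma{lem:eps4}, applied on $\Omega_{\tilde r}$ instead of on $\Omega_r$, would give $\|f_\ve\|_{L^2(\Omega_{\tilde r})}\leq C$, where $f_\ve:=t_\ve/\ve^2$. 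Since $|\Delta u_\ve|\leq C|t_\ve|/\ve^2= Cf_\ve$ by \eqref{eq:EL-geps} and \eqref{ha3}, interior $L^2$ elliptic estimates on $\Omega_{\tilde r}$, combined with $\|u_\ve\|_\infty\leq R_0$ from \rprop{prop:lim-to-one}, yield a uniform $W^{2,2}$ bound on $u_\ve$ on any compact subset of $\Omega_{\tilde r}$; by Sobolev embedding in dimension two this provides uniform $L^p$ bounds on $\nabla u_\ve$ and hence on $\nabla\va_\ve$ (using \eqref{ha2} and $\|t_\ve\|_\infty\to 0$) on such compacts, for every $p<\infty$.

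Next, I would run the iteration from \rprop{prop:t-eps2} in localized form. Fix radii $\tilde r<r_1<r_2<r$ and a cutoff $\chi\in C_c^\infty(\Omega_{\tilde r})$ equal to $1$ on $\Omega_{r_1}$. Multiplying \eqref{eq:ELN-t} by $|t_\ve|^{q-2}t_\ve\,\chi^{2q}/\ve^{2(q-1)}$, integrating by parts, and using \eqref{eq:1-1}, \eqref{eq:1-2} together with H\"older's inequality should produce an inequality of the form $\|\chi^{2}f_\ve\|_q\leq C_q\|\chi\nabla\va_\ve\|_{2q}^2+(\text{terms absorbable via }\|t_\ve\|_\infty\to 0)$. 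Combined with the previous step this gives $\|f_\ve\|_{L^q(\Omega_{r_1})}\leq C_q$ for every $q<\infty$. Feeding this back into \eqref{eq:EL-geps}, we get $\|\Delta u_\ve\|_{L^q(\Omega_{r_1})}\leq C_q$ for all $q$, and applying interior elliptic estimates once more on $\Omega_{r_2}\subset\subset\Omega_{r_1}$ yields $\|u_\ve\|_{W^{2,q}(\Omega_{r_2})}\leq C_q$. Choosing $q>2$ and invoking the Sobolev embedding $W^{2,q}\hookrightarrow C^{1,\alpha}$ produces the desired $L^\infty$ bound on $\nabla u_\ve$, in fact on $\Omega_{r_2}\supset \Omega_r$.

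The hard part will be the bookkeeping in the localized iteration: the integration by parts against $|t_\ve|^{q-2}t_\ve\chi^{2q}$ generates cross terms proportional to $|\nabla\chi|\cdot|t_\ve|^{q-1}$, which have to be absorbed into the leading $L^q$ norm of $f_\ve$; this is precisely where the uniform smallness $\|t_\ve\|_\infty\to 0$ from \rprop{prop:lim-to-one} is used. Beyond this rearrangement, no new ingredient is needed: the initial $L^2$ bound is supplied by \rlemma{lem:eps4}, the intermediate $L^p$ control on $\nabla\va_\ve$ is a consequence of $L^2$ elliptic theory, and the concluding step is a standard Sobolev embedding.
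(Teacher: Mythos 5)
Your proposal is correct and follows essentially the same route as the paper: \rlemma{lem:eps4} applied on a slightly larger domain $\Omega_{\tilde r}$ (with $\tilde r$ chosen by Fubini from \eqref{eq:64}) gives the $L^2$ control on $f_\ve=t_\ve/\ve^2$; interior $L^2$ elliptic theory plus Sobolev then give $L^p$ bounds on $\nabla\ue$, hence on $\nabla\va_\ve$; the $|t_\ve|^{q-2}t_\ve$ test function upgrades the integrability of $f_\ve$; and a final elliptic-plus-Sobolev step produces the $C^1$ bound on $\Omega_r$. The only meaningful technical difference is in how the boundary/localization term in the $L^q$-estimate is handled: you multiply by a cutoff power $\chi^{2q}$ and absorb the $|\nabla\chi|$-cross terms using H\"older, Young, and the uniform smallness $\|t_\ve\|_\infty\to 0$ (this does work, as you anticipate, since the dangerous term is $\lesssim \|t_\ve\|_\infty\,\|\chi^2 f_\ve\|_q^{q-1}\,\|\nabla\chi\|_{2q}^2$), whereas the paper invokes Fubini a second time to select $s$ with the improved boundary bound \eqref{eq:71} and estimates $\int_{\p\O_s}|f_\ve|^{q-1}\,\p t_\ve/\p n$ directly for the single value $q=5/2$, avoiding cutoff cross terms altogether. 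Your version does all $q<\infty$ rather than the minimal $q=5/2$, which is harmless. Both devices are standard and valid here; there is no gap in your argument beyond the bookkeeping you already flag.
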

\begin{proof}
Choose $\widetilde r\in(r_0/6,r_0/5)$ satisfying \eqref{eq:68} on
$\partial\Omega_{\widetilde r}$. Then the above
arguments apply for $\Omega_{\widetilde r}$. In particular, \eqref{eq:69} holds
on $\Omega_{\widetilde r}$, and using Fubini theorem we can find $s\in(\widetilde
r_0/4, r_0/3)$ such that
\begin{equation}
   \label{eq:71}
\int_{\partial\Omega_s}\left[|\nabla t_\varepsilon|^2+\frac{t_\varepsilon^2}{\varepsilon^2}\right]\leq C\varepsilon^2.
  \end{equation}
 Since $|\nabla W(\ue)|=O(t_\varepsilon)$, the estimate \eqref{eq:69} on $\Omega_{\widetilde
   r}$ implies that $\|\Delta\ue\|_{L^2(\Omega_{\widetilde r})}=O(1)$. By standard interior
 elliptic estimates, it follows that
 \begin{equation*}
   \|\ue\|_{H^2(\Omega_s)}\leq C,
 \end{equation*}
 and then, by Sobolev embeddings,
 \begin{equation}
   \label{eq:73}
 \|\nabla\ue\|_{L^p(\Omega_s)}\leq C_p,\ \fo p\in[1,\infty).
 \end{equation}
  Next we argue similarly to the proof of \rprop{prop:t-eps2}. 
 For any $q>2$, multiplying
  \eqref{eq:ELN-t} by $|t_\varepsilon|^{q-2}t_\ve/(\varepsilon^2)^{q-1}$
  and integrating over $\Omega_s$ gives
 \be
\label{eq:63}
\begin{aligned}
    c_1\int_{\Omega_s} \left(\frac{|t_\varepsilon|}{\varepsilon^2}\right)^q\le
    &\int_{\Omega_s}\left[\frac{(q-1)}{\varepsilon^{2(q-1)}}|t_\varepsilon|^{q-2}|\nabla t_\varepsilon|^2+(2\alpha+\alpha_t
    t_\varepsilon)\left(\frac{|t_\varepsilon|}{\varepsilon^2}\right)^q\right]\\=&\int_{\Omega_s}
    c|\nabla\varphi_\varepsilon|^2\left(\frac{|t_\varepsilon|}{\varepsilon^2}\right)^{q-2}\frac{t_\varepsilon}{\varepsilon^2}+\int_{\partial\Omega_s}\left(\frac{|t_\varepsilon|}{\varepsilon^2}\right)^{q-2}
                             \left(\frac{t_\varepsilon}{\varepsilon^2}\right)\left(\frac{\partial t_\varepsilon}{\partial n}\right).
  \end{aligned}
  \ee
 We apply the above with $q=5/2$.
 Using \eqref{eq:71} and Cauchy-Schwarz inequality we estimate the boundary integral by
 \be
   \label{eq:72}
   \begin{aligned}
   \left|\int_{\partial\Omega_s}\left(\frac{|t_\varepsilon|}{\varepsilon^2}\right)^{1/2}
                             \left(\frac{t_\varepsilon}{\varepsilon^2}\right)\left(\frac{\partial
                                 t_\varepsilon}{\partial n}\right)\right|\leq &
                             \frac{1}{\varepsilon^3}\left(\int_{\p\Omega_s}|t_\varepsilon|^3\right)^{1/2}\left(\int_{\p\Omega_s}|\nabla t_\varepsilon|^2\right)^{1/2}
 %\\
 =
% &
 \frac{1}{\varepsilon^3}\cdot o(\varepsilon^2)\cdot O(\varepsilon)=o(1).
 \end{aligned}
 \ee

From \eqref{eq:63}--\eqref{eq:72} and H\"older inequality we deduce
that the function $\d f_\varepsilon:=\frac{t_\varepsilon}{\varepsilon^2}$ satisfies
\begin{equation*}
  c_1\|f_\varepsilon\|_{L^{5/2}(\Omega_s)}^{5/2}\leq\int_{\Omega_s} c |\nabla\varphi_\varepsilon|^2|f_\varepsilon|^{3/2}+o(1)\leq c_2\|\nabla\varphi_\varepsilon\|_{L^5(\Omega_s)}^2\|f_\varepsilon\|_{L^{5/2}(\Omega_s)}^{3/2}+o(1).
\end{equation*}
 Applying \eqref{eq:73} to the above yields
 \begin{equation*}
\|f_\varepsilon\|_{L^{5/2}(\Omega_s)}^{5/2}\leq C\|f_\varepsilon\|_{L^{5/2}(\Omega_s)}^{3/2}+o(1),
 \end{equation*}
implying that $\|f_\varepsilon\|_{L^{5/2}(\Omega_s)}=O(1)$ and therefore
$\|\Delta u_\varepsilon\|_{L^{5/2}(\Omega_s)}=O(1)$.
By elliptic interior estimates we obtain that $\|u_\varepsilon\|_{W^{2,5/2}(\Omega_r)}=O(1)$, and
  \eqref{eq:70} follows by Sobolev embedding.
\end{proof}
We are now ready to complete the proof of the main result of this section.
\begin{proof}[Proof of \rth{th:conv-geps}]
  The strong convergence $\ue\to u_0$ in $H^1(\Omega)$ was established in
  \rprop{prop:H1bis}. To complete the proof  of \eqref{eq:41-1} we
  need to prove the uniform convergence. This follows from the
  two uniform convergences on $\overline\Omega$:  $t_\varepsilon\to 0$  (see
  \rprop{prop:lim-to-one}) and  $\psi_\varepsilon\to 0$ (which results, by Morrey's theorem, from the
  $W^{1,4}$-convergence that was established in \rlemma{lem:w1.4-eps}).  

For the proof of  \eqref{eq:41-2} we only need to verify the following estimate:
\begin{equation}
  \label{eq:74}
  \|t_\varepsilon\|_{L^\infty(K)}\leq C_K\varepsilon^2,
\end{equation}
 for every compact $K\subset\subset\Omega$. We shall prove \eqref{eq:74} using an argument from \cite{bbh93}.  We first use
 Kato's inequality in \eqref{eq:ELN-t} to get
 \begin{equation*}
   \Delta|t_\varepsilon|\geq \sgn(t_\varepsilon)\,\Delta t_\varepsilon=(2\alpha+\alpha_tt_\varepsilon)\frac{|t_\varepsilon|}{\varepsilon^2}-c|\nabla\varphi_\varepsilon|^2\sgn(t_\varepsilon).
 \end{equation*}

Hence, by \eqref{eq:1-1} and \eqref{eq:70},
\begin{equation}
  \label{eq:75}
 -\Delta|t_\varepsilon|+c_1\frac{|t_\varepsilon|}{\varepsilon^2}\leq C_r\text{ in }\Omega_r.
\end{equation}
Now recall \cite[Lemma 2]{bbh93} that states that the radial solution $\omega=\omega(r)$
of
\begin{equation}
  \label{eq:76}
\begin{cases}
-\varepsilon^2\Delta\omega+\omega=0&\text{in }B_R(0)\\
\omega=1& \text{on }\partial B_R(0)
\end{cases}
\end{equation}
satisfies, for $\varepsilon<\d\frac{3}{4}R$,
\begin{equation}
  \label{eq:77}
  \omega(r)\leq e^{(r^2-R^2)/(4\varepsilon R)}\text{ in }B_R(0).
\end{equation}
 Let $d:=\dist(K,\partial\Omega)$, so that \eqref{eq:75} is satisfied with $r:=d/2$. Let   $x_0$ be an arbitrary point in $K$. With no loss of generality we
 may assume $x_0=0$. From
 \eqref{eq:75}--\eqref{eq:77} and the maximum principle we obtain that
 \begin{equation*}
   |t_\varepsilon|\leq C\varepsilon^2+\exp{\left[\sqrt{c_1}(|x|^2-d^2/4)/(2\ve d) \right]}\text{ in }B_{d/2}(0).
 \end{equation*}
In particular, 
\begin{equation}
\l{hi1}
  \frac{|t_\varepsilon(0)|}{\varepsilon^2}\leq  C+\frac{1}{\varepsilon^2}\, \exp{\left[-d\, \sqrt{c_1}/(8\ve) \right]}.
\end{equation}
 
 Since the right-hand side of \eqref{hi1} remains bounded as $\varepsilon\to0$,
 \eqref{eq:74} follows, completing the proof of \eqref{eq:41-2}.

From \eqref{eq:41-2} and elliptic estimates we obtain that $\ue$
 is bounded in $W^{2,p}_{\text{loc}}(\Omega)$ for every $p<\infty$, and
 \eqref{eq:41-3} follows from Morrey's theorem.
 Finally, \eqref{eq:41-4} follows from the previous estimates by the
 same arguments as in the proof of \rth{th:conv-degz}.
\end{proof}
We will need in the next section also the following variant of
\rth{th:conv-geps} and \rth{th:conv-degz}. The proof is very similar to the proofs of
these theorems, and  is therefore omitted.
\begin{theorem}
  \label{th:conv-bdry}
 Let $\Omega$ be a smooth bounded and  simply connected domain in
 $\R^2$. Let $x_0\in\partial\Omega$ and suppose that $R>0$ is sufficiently small such that 
 $\partial B_R(x_0)\cap\partial\Omega$ consists of exactly two points. 
 
 Suppose that
 $g:\partial(B_R(x_0)\cap\Omega)\to\Gamma$ is a continuous map of degree zero such that the
 restriction $g_{|\partial\Omega\cap \overline B_R(x_0)}$ is smooth. Let
 $\varphi_0$ be a continuous function such that  $g=\tau(e^{\im \varphi_0})$. Let $\zeta$ be the harmonic extension of $\va_0$ to
$\Omega\cap B_R(x_0)$ and set  $u_0:=\tau(e^{\im \zeta})$.

 For each $\varepsilon>0$ let
 $g_\varepsilon:\partial(\Omega\cap B_R(x_0)) \to\R^2$ satisfy:
\begin{align}
&g_\varepsilon=g\text{ on }\partial\Omega\cap B_R(x_0)\,\label{eq:cond1}\\
&\|g_\varepsilon\|_{H^1(\partial B_R(x_0)\cap\Omega)}\leq C,\label{eq:cond2}\\
&\int_{\partial B_R(x_0)\cap\Omega} W(g_\varepsilon)\leq C\varepsilon^2,\label{eq:cond3}\\
&
   g_\varepsilon\to g\text{ in }H^s (\partial B_R(x_0)\cap\Omega),\ 0<s<1.
\label{eq:cond4}
\end{align}

Let $u_\varepsilon$ be a solution of \eqref{eq:EL-geps} on $\Omega\cap B_R(x_0)$
(instead of $\Omega$) satisfying \eqref{eq:34}. Then for every $R_1\in(0,R)$
we have:
\begin{gather}
\|\Delta u_\varepsilon\|_{L^\infty(\overline{\Omega}\cap
  B_{R_1}(x_0))}\leq C_{R_1},\label{eq:96-1}\\
   u_\varepsilon\to u_0 \text{ in }C^{1,\alpha}(\overline{\Omega}\cap B_{R_1}(x_0)),\label{eq:96-2}\\
 \|u_\varepsilon-u_0\|_{L^\infty(\overline{\Omega}\cap
   B_{R_1}(x_0))}\leq C_{R_1}\varepsilon^2,\label{eq:96-3}\\
\|\nabla(u_\varepsilon-u_0)\|_{L^\infty(\overline{\Omega}\cap B_{R_1}(x_0))}\leq
 C_{R_1}\varepsilon\label{eq:96-4}.
\end{gather}
\end{theorem}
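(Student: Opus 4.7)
The proof will follow the same three-stage strategy as in \rth{th:conv-degz} and \rth{th:conv-geps}, adapted to the mixed-boundary geometry on $\omega:=\Omega\cap B_R(x_0)$. Note that $\omega$ is simply connected with $\partial\omega=\Sigma_1\cup\Sigma_2$, where $\Sigma_1:=\partial\Omega\cap\overline B_R(x_0)$ (on which $g_\ve=g$ is $\Gamma$-valued and smooth, so $t_\ve\equiv0$) and $\Sigma_2:=\partial B_R(x_0)\cap\overline\Omega$ (on which \eqref{eq:cond2}--\eqref{eq:cond4} hold); the only non-smooth points of $\p\omega$ are the two corner points $\Sigma_1\cap\Sigma_2$, which lie on $\p B_R(x_0)$ and are therefore at a positive distance from $\overline\Omega\cap B_{R_1}(x_0)$. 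By \eqref{eq:34} and simple connectedness of $\omega$, the coordinates $(t_\ve,\va_\ve)$ of \eqref{eq:36} are defined globally on $\overline\omega$, and one repeats verbatim the arguments giving $\|\ue\|_\infty\le R_0$ and the uniform convergence $t_\ve\to 0$ on $\overline\omega$ (\rprop{prop:lim-to-one}).

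The key step is the $H^1$-convergence $\ue\to u_0$, analogous to \rprop{prop:H1bis}. Writing $\va_\ve=\psi_\ve+\zeta_\ve$ with $\zeta_\ve$ the harmonic extension to $\omega$ of $\va_{\ve |\p\omega}$ and $\psi_\ve=0$ on $\p\omega$, the bound \eqref{eq:41} is reproduced without change. The crux is to control the boundary term $\int_{\p\omega}t_\ve\,\p_n t_\ve$ appearing after multiplying \eqref{eq:18} by $t_\ve$. Since $t_\ve\equiv 0$ on $\Sigma_1$ only $\Sigma_2$ contributes, and I would recover a Pohozaev identity adapted to this setting by choosing a smooth vector field $V$ on $\overline\omega$ which equals the outward normal on $\Sigma_2$ and vanishes in a neighborhood of $\Sigma_1$, so that the boundary term in the analogue of \eqref{eq:47}--\eqref{eq:48} is supported on $\Sigma_2$ and is controlled by \eqref{eq:cond2}--\eqref{eq:cond3} (the interior terms absorb into $E_\ve(\ue)$ just as before). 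The rest of \rprop{prop:H1bis} and the $W^{1,4}$-argument of \rlemma{lem:w1.4-eps} carry over with $\zeta$ replaced by $\zeta_\ve$ and with \eqref{eq:cond2} providing the $W^{1,4}$-boundedness of $\zeta_\ve$ on $\omega$.

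The pointwise estimates next use the Fubini slicing of \rlemma{lem:eps4} and \rlemma{lem:infty-grad}: inside $\omega$, one extracts a smooth separating curve $\Sigma$ (a level set of the distance to $\Sigma_2$) on which the analogue of \eqref{eq:68} holds, and runs the $L^q$-argument \eqref{eq:63} using $q=5/2$. On the subdomain bounded away from $\Sigma_2$ this gives $\|\na\ue\|_{L^\infty}\le C$; standard up-to-the-boundary elliptic regularity near the smooth portion $\Sigma_1\cap B_{R_1}(x_0)$ of $\p\omega$ (where the Dirichlet datum $g$ is smooth and $\Gamma$-valued) extends this bound to a neighborhood of $\overline\Omega\cap B_{R_1}(x_0)$. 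The pointwise bound \eqref{eq:96-3} on $t_\ve$ then follows, as in the proof of \eqref{eq:74}, by combining Kato's inequality for \eqref{eq:ELN-t} with the radial barrier of \cite[Lemma 2]{bbh93}: at a point interior to $\omega$ one uses a ball barrier, and at a point near $\Sigma_1$ one uses $t_\ve\equiv 0$ on $\Sigma_1$ together with a barrier in a half-ball (after flattening $\p\Omega$ locally). With \eqref{eq:96-3} in hand, \eqref{eq:96-4} and the corresponding bounds on $\psi_\ve$ (from the equation \eqref{eq:23}, whose coefficients satisfy $a-1=O(\ve^2)$, $b=O(\ve^2)$) follow by elliptic and interpolation estimates as in \rprop{prop:t-eps2}, and finally \eqref{eq:96-1}--\eqref{eq:96-2} follow by the $W^{2,p}_{\mathrm{loc}}$-bound and Morrey's theorem.

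The principal technical obstacle I expect is the construction of the vector field $V$ and the careful handling of the Pohozaev boundary term near the two corners $\Sigma_1\cap\Sigma_2$: although $V$ can be taken to vanish near the corners, one must check that this does not lose information about $|\p\ue/\p n|$ on $\Sigma_2$ near the corners. This is exactly why the conclusions are stated with $R_1<R$, i.e.\ strictly away from the corners; throughout the argument every boundary-regularity statement localises either to $\Sigma_1\cap B_{R_1}(x_0)$ (where the data is smooth) or to a set bounded away from $\Sigma_2$, avoiding the non-smooth points of $\p\omega$.
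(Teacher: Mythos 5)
The paper itself gives no detailed argument for this theorem; it says only that ``the proof is very similar to the proofs of these theorems, and is therefore omitted.'' Your reconstruction follows the intended strategy (adapting Propositions~\ref{prop:lim-to-one}--\ref{prop:t-eps2} and Lemmas~\ref{lem:w1.4-eps}--\ref{lem:infty-grad} to $\omega:=\Omega\cap B_R(x_0)$), and most steps do carry over exactly as you describe: $t_\ve\equiv 0$ on $\Sigma_1$, $\psi_\ve=\varphi_\ve-\zeta_\ve=0$ on all of $\partial\omega$, the Fubini slicing over $\partial B_\rho(x_0)\cap\Omega$, and the Kato-plus-barrier argument (with a half-ball barrier near $\Sigma_1$).

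The one place where the proposal is not quite right is the Pohozaev step, and I think you have misdiagnosed the difficulty. Taking $V$ to \emph{vanish near $\Sigma_1$} forces $V\cdot n$ to also vanish on $\Sigma_2$ near the two corners; the weighted Pohozaev inequality then fails to control $\int_{\Sigma_2}|\partial_n u_\ve|^2$ near the corners, and the boundary term $\int_{\Sigma_2} t_\ve\,\partial_n t_\ve$ in the analogue of \eqref{eq:45} is \emph{not} controlled --- the bootstrap giving $\int_\omega[|\nabla t_\ve|^2+t_\ve^2/\ve^2]=o(1)$ then does not close. Saying that ``the conclusions are stated with $R_1<R$'' does not help, because this $H^1$-type estimate is an integral identity over all of $\omega$, boundary term included; it is a prerequisite for everything that follows, not a local statement. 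The clean fix is to observe that for $R$ small the Lipschitz domain $\omega$ is \emph{strictly star-shaped} about a point $z$ of the form $z=x_0+(R/2)\,\nu$, $\nu$ the inward normal to $\partial\Omega$ at $x_0$: both on $\Sigma_2$ (where $V\cdot n = R - (\ldots)\ge R/2$) and on $\Sigma_1$ (where $(x-z)\cdot n = R/2 + O(R^2)$), the field $V:=x-z$ satisfies $V\cdot n\ge c(R)>0$, including at the corners. With this choice $V$ is smooth on $\overline\omega$, no cutoff near $\Sigma_1$ or the corners is needed, the cross terms $(V\cdot\sigma)\,\partial_n u_\ve\cdot\partial_\sigma g_\ve$ on $\Sigma_1$ and $\Sigma_2$ are absorbed into $\frac14(V\cdot n)|\partial_n u_\ve|^2$ exactly as in Lemma~\ref{lem:pohozaev}, and one obtains $\int_{\Sigma_2}|\partial_n u_\ve|^2\le C\bigl(1+E_\ve(u_\ve;\omega)\bigr)$. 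The rest of your bootstrap and your localization arguments then go through. (If one insists on a cutoff field, the argument can be salvaged, but only by letting the transition scale shrink like $\ve$ and using the one-dimensional interpolation $\|t_\ve\|_{L^\infty(\Sigma_2)}\le C\ve^{1/2}$ coming from \eqref{eq:cond2}--\eqref{eq:cond3} together with $\|\nabla t_\ve\|_\infty\le C/\ve$; this is considerably more delicate and unnecessary once $V=x-z$ is used.)

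One further small point you should record: $\zeta_\ve$ is harmonic on a domain with two corners of opening angle close to $\pi/2$ (for $R$ small the intersection of $\partial\Omega$ and $\partial B_R(x_0)$ is transversal), so the corners are convex and the $W^{1,4}(\omega)$ bound you use for $\zeta_\ve$ in the analogue of Lemma~\ref{lem:w1.4-eps} does hold, but this requires the corner angles to be bounded away from $\pi$; it is worth stating that this is guaranteed by taking $R$ sufficiently small.
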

 Note that (possibly after passing
 to a subsequence) the condition \eqref{eq:cond4} actually follows from
 conditions \eqref{eq:cond1}--\eqref{eq:cond3}  via the compact embedding $H^1(\partial B_R(x_0)\cap\Omega)\hookrightarrow
 H^s (\partial B_R(x_0)\cap\Omega)$, $0<s<1$.

\section{General solutions}
\label{sec:nonzerodeg}

\subsection{Preliminary estimates}
\label{subsec:prelim}
 Assume that $\Omega$ is a smooth bounded domain in $\R^2$, strictly star-shaped with respect to a point $z\in\Omega$. With no loss of generality, we may assume that $z=0$, and thus 
 \be
 \l{hi2}
 x\cdot n=x\cdot n(x)\ge c>0,\ \fo x\in\p\O
 \ee
 (with $n=n(x)$ the outward normal to $\p\O$ at $x\in\p\O$).
 
  Let $g:\partial\Omega\to\Gamma$ be a smooth boundary datum of degree $d$. For each $\varepsilon>0$, let $\ue$ denote a solution of
 \eqref{eq:EL}. As in the previous sections, we denote by
 $t(x)=t_\varepsilon(x)$ the signed distance of $\ue(x)$ to $\Gamma$. In contrast with the previous sections, we do not
 impose a condition like \eqref{eq:34}, and thus we allow solutions
 with vortices.

We start with some basic estimates satisfied by the
 solutions $\ue$. We first notice that the results of
 \rlemma{lem:infty} and \rlemma{lem:grad} hold true since their proofs
 do not 
 rely on the degree of $g$.
 
 Next we prove a Pohozaev identity that does rely heavily on the
 star-shapeness assumption.
 \begin{lemma}
   \label{lem:pohozaev}
 We have
 \begin{equation}
   \label{eq:Pohozaev}
   \int_\Omega\frac{W(u_\ve)}{\ve^2}+\int_{\partial\O} \left|\frac{\partial
     u_\ve}{\partial n}\right|^2\leq C,
 \end{equation}
 for some $C$ independent of $\varepsilon$.
\end{lemma}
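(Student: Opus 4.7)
The plan is to apply the classical Pohozaev multiplier $x \cdot \nabla u_\ve$ to the Euler-Lagrange equation, exploiting the strict star-shapedness $x \cdot n \geq c > 0$. This is essentially the same strategy as in \cite{bbh94} and \cite{bbh93}, but with the observation that on $\partial\Omega$ we have $W(u_\ve) = W(g) \equiv 0$ since $g$ takes values into $\Gamma$.

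First, I would multiply the equation $\Delta u_\ve = \ve^{-2}\nabla W(u_\ve)$ scalarly by the vector field $x \cdot \nabla u_\ve$ and integrate over $\Omega$. On the potential side, since $\nabla W(u_\ve) \cdot (x \cdot \nabla u_\ve) = x \cdot \nabla \bigl(W(u_\ve)\bigr)$, an integration by parts gives
\begin{equation*}
 \frac{1}{\ve^2}\int_\Omega \nabla W(u_\ve)\cdot(x\cdot\nabla u_\ve) = \frac{1}{\ve^2}\int_{\partial\Omega}(x\cdot n)\,W(g) - \frac{2}{\ve^2}\int_\Omega W(u_\ve) = -\frac{2}{\ve^2}\int_\Omega W(u_\ve),
\end{equation*}
because $W(g) \equiv 0$ on $\partial\Omega$.

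Second, for the Laplacian side, the standard Pohozaev computation (using $u_j v_j = |\nabla u|^2 + \tfrac12 x\cdot\nabla|\nabla u|^2$ with $v = x\cdot\nabla u$ and one integration by parts) yields componentwise
\begin{equation*}
 \int_\Omega \Delta u_\ve \cdot (x\cdot\nabla u_\ve) = \int_{\partial\Omega}(x\cdot\nabla u_\ve)\cdot\partial_n u_\ve - \frac{1}{2}\int_{\partial\Omega}(x\cdot n)|\nabla u_\ve|^2.
\end{equation*}
On $\partial\Omega$ I would decompose $\nabla u_\ve = (\partial_n u_\ve)n + (\partial_\sigma g)\sigma$ (the tangential part is determined by the fixed boundary data $g$ and hence uniformly bounded). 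Equating the two sides and rearranging produces
\begin{equation*}
 \frac{1}{2}\int_{\partial\Omega}(x\cdot n)|\partial_n u_\ve|^2 + \frac{2}{\ve^2}\int_\Omega W(u_\ve) = \int_{\partial\Omega}\left[\frac{1}{2}(x\cdot n)|\partial_\sigma g|^2 - (x\cdot\sigma)\partial_\sigma g\cdot\partial_n u_\ve\right].
\end{equation*}

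Third, to close the estimate I would apply Young's inequality to the cross term on the right, splitting
\begin{equation*}
 \bigl|(x\cdot\sigma)\partial_\sigma g\cdot\partial_n u_\ve\bigr| \leq \frac{x\cdot n}{4}|\partial_n u_\ve|^2 + \frac{|x\cdot\sigma|^2|\partial_\sigma g|^2}{x\cdot n},
\end{equation*}
where the denominator $x\cdot n$ is bounded below by $c > 0$ thanks to \eqref{hi2}. Absorbing $\tfrac14\int_{\partial\Omega}(x\cdot n)|\partial_n u_\ve|^2$ into the left-hand side and bounding the remaining boundary integrals by a constant depending only on $g$ and $\Omega$ gives
\begin{equation*}
 \frac{c}{4}\int_{\partial\Omega}|\partial_n u_\ve|^2 + \frac{2}{\ve^2}\int_\Omega W(u_\ve) \leq C,
\end{equation*}
which is exactly \eqref{eq:Pohozaev}. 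There is no real obstacle here; the only subtlety is ensuring that the star-shapedness with uniform strict positivity $x\cdot n \geq c > 0$ is used so that both the potential integral and the normal-derivative boundary integral come out with the correct signs and are controllable simultaneously.
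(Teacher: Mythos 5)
Your proposal is correct and follows essentially the same route as the paper: the Pohozaev multiplier $x\cdot\nabla u_\ve$, the observation that $W(g)\equiv 0$ kills the boundary term from the potential, the standard integration by parts for the Laplacian side, and then absorption of the cross term using the strict star-shapedness $x\cdot n\geq c>0$. The only difference is that you spell out the Young's inequality step that the paper leaves implicit.
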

\begin{proof}
  The proof is standard and requires only a simple adaptation of the proof in
  \cite{bbh94}. We argue as in the
  proof of \rprop{prop:H1bis} multiplying both side of the equation in
  \eqref{eq:EL} by $V\cdot\nabla\ue$, but this time with $V=(x_1,x_2)$. For this choice of $V$, 
  \eqref{eq:47} reads
  \begin{equation}
    \label{eq:78}
  \int_\Omega \Delta\ue\cdot (V\cdot\nabla\ue)=\int_{\partial\Omega}\left[\frac{\partial\ue}{\partial n}\cdot (x\cdot\nabla\ue)-\frac{1}{2}(x\cdot n)|\nabla u_\varepsilon|^2\right],
  \end{equation}
while \eqref{eq:48} becomes
\begin{equation}
  \label{eq:79}
  \frac{1}{\varepsilon^2}\int_\Omega \nabla W(\ue)\cdot(V\cdot\nabla\ue)= \frac{1}{\varepsilon^2}\int_\Omega V\cdot \nabla(W(\ue))=-\frac{2}{\varepsilon^2}\int_\Omega W(\ue).
\end{equation}
Combining \eqref{eq:78} with \eqref{eq:79} yields
\begin{equation*}
  \frac{2}{\varepsilon^2}\int_\Omega W(\ue)+\frac{1}{2}\int_{\partial\Omega}(x\cdot n)\left|\frac{\partial\ue}{\partial n}\right|^2=\int_{\partial\Omega}\left[\frac{1}{2}(x\cdot n)\left|\frac{\partial g}{\partial\sigma}\right|^2-(x\cdot\sigma)\frac{\partial\ue}{\partial n}\cdot\frac{\partial g}{\partial\sigma}\right],
\end{equation*}
 which, in view of \eqref{hi2},  clearly implies \eqref{eq:Pohozaev}.
\end{proof}
\par Since by \eqref{eq:117} there exists $0<\mu_0<\mu$ such that
$W(\zeta)\geq \mu_0\dist^2(\zeta,\Gamma)$ for $\zeta\in B_{R_0}$, it
follows from \eqref{eq:Pohozaev} that 
\begin{equation}
  \label{eq:120}
  \int_\Omega\frac{\dist^2(u_\varepsilon(x),\Gamma)}{\varepsilon^2}\leq C.
\end{equation}

Using the two estimates \eqref{eq:Pohozaev} and \eqref{eq:grad}, we can show, using the argument of \cite[Chapter 4]{bbh94}, that for any
small $\delta_2>0$ (we will always take $\delta_2<\delta_1$, see \rprop{prop:max})
the set 
\be
\label{eq:se}
S_{\varepsilon,\delta_2}:=\{x\in \Omega;\,  \dist(u_\ve(x), \Gamma)>\delta_2\}
\ee
 can be covered by a finite number of \enquote{bad discs}
 $\{ B_{\lambda\ve}(x_j^\ve)\}_{j=1}^{k_\varepsilon}$ with
 \begin{equation}
\label{hk1}
  \{x_j^\ve\}_{j=1}^{k_\varepsilon}\subset S_{\varepsilon,\delta_2},
 \end{equation}
 where $k_\varepsilon$ is bounded  uniformly in
 $\varepsilon$. 

Indeed, we first  use \eqref{eq:grad} to choose $\lambda>0$ such that
 \begin{equation}
   \label{eq:94}
   \dist(u_\ve(x),\Gamma)>\delta_2 \Longrightarrow
   \big\{ B_{\lambda\varepsilon/4}(x)\subset\Omega\text{ and }
   \dist(u_\ve(y),\Gamma)>\delta_2/2,\,\forall y\in  B_{\lambda\varepsilon/4}(x)\big\}.
 \end{equation}
Then, we take a collection of mutually disjoint discs $\{
B_{\lambda\ve/4}(x_j^\ve)\}_{j=1}^{k_\varepsilon}$ which is maximal
with respect to the property that \eqref{hk1}
holds true.
Note that by \eqref{h1} there exists $\eta=\eta(\delta_2)$ such that
\begin{equation}
\label{eq:105}
  W(z)>\eta,\,\forall z\in B_{R_0}\setminus \Gamma_{\delta_2/2},
\end{equation}
where $\Gamma_{\delta_2/2}=\{z\in\R^2;\dist(z,\Gamma)<\delta_2/2\}$.
 Taking into account \eqref{eq:linfty} we get from \eqref{eq:94}--\eqref{eq:105}
 that
 \begin{equation}
   \label{eq:104}
   \frac{1}{\varepsilon^2}\int_{B_{\lambda\ve/4}(x_j^\ve)}W(u_\varepsilon)\geq\pi\lambda^2\eta/16,~j=1,\ldots,k_\varepsilon.
 \end{equation}
The uniform bound for $k_\varepsilon$ follows by combining \eqref{eq:104} with
\eqref{eq:Pohozaev}. By construction  
$S_{\ve,\delta_2}\subset\bigcup_{j=1}^{k_\varepsilon}B_{\lambda\ve}(x_j^\ve)$.
Next, by increasing $\lambda$ if
 necessary, we may also assume that the bad discs are well-separated, in
 the sense that $B_{4\lambda\ve}(x_j^\ve)\cap
 B_{4\lambda\ve}(x_\ell^\ve)=\emptyset$ if $j\neq \ell$ (this may results
 in decreasing the value of $k_\varepsilon$).
\par Passing to a subsequence $\varepsilon_n\to0$, but continuing to denote $\varepsilon_n$ by
  $\varepsilon$, for simplicity, we may assume $k_\varepsilon=k$ is independent of $\varepsilon$.
 Note that  outside the bad discs the function $t(x)$ is well-defined and that we have
 \begin{equation}
 \l{eq:83}
|t(x)|\le\delta_2,\ \fo x\in\O_\ve:=\O \setminus \bigcup_{j=1}^k \overline{B_{\lambda\ve}(x_j^\ve)}.
\end{equation}
%We define
%\begin{equation}
%\label{eq:83}
%\Omega_\varepsilon=\Omega\setminus \bigcup_{j=1}^k \overline{B_{\lambda\ve}(x_j^\ve)}.
%\end{equation}

The definitive value of $\delta_2$ satisfying $\delta_2\le\delta_1$ will be chosen in Section \ref{s5}; see the proof of Proposition \ref{prop:W1p}. 

%  Hence,
% \begin{equation*}
% \{x\in \Omega;\, |t(x)|>\delta_2\}\subset \bigcup_{j=1}^k B_{\lambda\ve}(x_j^\ve).
%\end{equation*}
%We define
%\begin{equation}
%\label{eq:83}
%\Omega_\varepsilon=\Omega\setminus \bigcup_{j=1}^k \overline{B_{\lambda\ve}(x_j^\ve)}.
%\end{equation}

 We next prove that the $x_j^\ve$'s are relatively far away from $\p\O$.
 \bl
 \l{hj2}
 We have
 \be
 \l{hj3}
 \lim_{\ve\to 0}\frac{\dist (x_j^\ve, \p\O)}{\ve}=\infty, \ j=1,\ldots, k.
 \ee
 \el
 \begin{proof}
 We argue by contradiction and assume that \eqref{hj3} does not hold for some $j$ along some sequence $\ve_n\to 0$. For notational simplicity, we drop the subscript $n$. We will obtain a contradiction via a blow up analysis. Let, for small $\ve$, $y^\ve$ denote the projection of $x_j^\ve$ onto $\p\O$, and let ${\cal R}^\ve$ denote the rotation of $\R^2$ such that ${\cal R}^\ve (0,-1)= n(y^\ve)$. Consider 
 \bes
 v_\ve(x):=\ue\left(y^\ve+\ve\, {\cal R}^\ve x  \right), \ x\in \frac 1\ve \left({\cal R}^\ve\right)^{-1}\, \left(\O- y^\ve\right).
 \ees
 
 Using \eqref{eq:linfty} and \eqref{eq:Pohozaev}, together with the boundary condition in \eqref{eq:EL}, we find that, up to a subsequence and uniformly on compacts of $H: =\{ x\in\R^2;\, x_2>0\}$, $v_\ve$ converges to a solution $v$ of
 \be
 \l{hj5}
 \begin{cases}
 \Delta w=\na W(w)&\text{in } H\\
 w=w_0\in\Gamma&\text{on }\p H\\
 \d\frac {\p w}{\p x_2}=0&\text{on }\p H
 \end{cases};
 \ee
 here, $w_0$ is a constant. Let us note that $w$ is not a constant. Indeed, we assumed by contradiction that \eqref{hj3} does not hold, and then the fact that $w$ is not constant follows from \eqref{hk1}.
 
 Consider now the map 
 \be
 \l{hj6}
 \widetilde w=\begin{cases}
 w,&\text{in }H\\
 w_0,&\text{in }\R^2\setminus H
 \end{cases}.
 \ee
 
 In view of \eqref{hj5}, the map $\widetilde w$ satisfies $\Delta \widetilde w=\na W(\widetilde w)$ in $\R^2$, first in the distributions sense, then, by elliptic regularity, in the classical sense. By unique continuation, we have $\widetilde w= w_0$. (The unique continuation property follows from \cite{muller}; there, $\widetilde w$ is a scalar function, but this is not relevant for the proof. For an explicit result relevant for vector-valued functions, see e.g. \cite[Appendix]{lopes}.) This contradicts the fact that $w$ is not a constant, and achieves the proof of the proposition.
 \end{proof}

% Note that thanks to \eqref{eq:grad} we have
%\begin{equation}
%  \label{eq:106}
%  \lim_{\varepsilon\to0}\frac{\dist(x_j^\varepsilon,\partial\Omega)}{\varepsilon}=\infty,\quad j=1,\ldots,k.
%\end{equation}
% 
 
 Now that we know that the \enquote{bad discs} $B_{\lambda\ve}(x_j^\ve)$ are well-inside $\O$, we may define the integer $d_j^\ve$ as the degrees  of $u_\ve$ on $\partial B_{\lambda\ve}(x_j^\ve)$.
  By \eqref{eq:grad}, these integers  are uniformly bounded, so we may assume that their values are independent of $\varepsilon$ as well, and thus
\be
\l{hk7}
\deg (\ue, \p B_{\lambda\ve}(x_j^\ve))=d_j,\ \fo \ve,\ j=1,\ldots, k.
\ee  

In the sequel, in case there is no risk of confusion, we shall often drop the subscript $\varepsilon$.

\smallskip
   Our next estimate yields in particular a simple answer to Open Problem 19 in the book \cite{bbh94} (previously solved in
   \cite{cm-rem} using a different method); see Corollary \ref{hk4} below.
  \begin{proposition}
  \label{prop:t} We have 
$\int_{\O_\ve} [|\nabla t|^2+{t^2}/{\ve^2}]\leq C$.
\end{proposition}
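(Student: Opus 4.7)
The proof will proceed in two parts, corresponding to the two terms.

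For the bound $\int_{\Omega_\varepsilon} t^2/\varepsilon^2 \leq C$, the argument is essentially a corollary of what has already been established. Combining the Pohozaev identity \eqref{eq:Pohozaev} with the $L^\infty$-bound of Lemma \ref{lem:infty} (which confines $u_\varepsilon$ to $B_{R_0}$) and the non-degeneracy \eqref{eq:117} gives \eqref{eq:120}. Since $\dist(u_\varepsilon(x),\Gamma) = |t(x)|$ on $\Omega_\varepsilon$, restricting \eqref{eq:120} to $\Omega_\varepsilon$ yields the claim.

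For $\int_{\Omega_\varepsilon}|\nabla t|^2 \leq C$, my plan is to multiply the $t$-equation \eqref{eq:ELN-t} by $t$ and integrate by parts over $\Omega_\varepsilon$. Using the coercivity \eqref{eq:1-1}, this yields
$$\int_{\Omega_\varepsilon} |\nabla t|^2 + c_1\int_{\Omega_\varepsilon}\frac{t^2}{\varepsilon^2} \leq \int_{\Omega_\varepsilon}ct\,|\nabla\varphi|^2 + \int_{\partial\Omega_\varepsilon}t\,\frac{\partial t}{\partial n}.$$
On $\partial\Omega$, the integrand vanishes because $g\in\Gamma$ forces $t|_{\partial\Omega}\equiv 0$. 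On each $\partial B_{\lambda\varepsilon}(x_j^\varepsilon)$, the bounds $|t|\leq \delta_2$ (from \eqref{eq:83}), $|\nabla t|\leq |\nabla u_\varepsilon| \leq C/\varepsilon$ (Lemma \ref{lem:grad}), and the boundary length $2\pi\lambda\varepsilon$ yield an $O(1)$ contribution (since $k$ is bounded uniformly in $\varepsilon$).

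The main obstacle is controlling the interior term $\int_{\Omega_\varepsilon}ct\,|\nabla\varphi|^2$: the naive estimate $c_2\delta_2\int|\nabla\varphi|^2$ fails, since $\int_{\Omega_\varepsilon}|\nabla\varphi|^2$ grows logarithmically near vortices. To circumvent this, I would apply Young's inequality
$$|ct||\nabla\varphi|^2 \leq \frac{c_1}{2}\,\frac{t^2}{\varepsilon^2} + \frac{c_2^2}{2c_1}\,\varepsilon^2|\nabla\varphi|^4,$$
absorb the first term into the left-hand side, and show $\int_{\Omega_\varepsilon}\varepsilon^2|\nabla\varphi|^4 \leq C$. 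The key ingredient for the latter is a pointwise refinement of Lemma \ref{lem:grad}, namely an estimate of the form $|\nabla\varphi(x)| \leq C/\dist(x,\{x_1^\varepsilon,\ldots,x_k^\varepsilon\})$ on $\Omega_\varepsilon$, capturing the expected $1/r$ behavior of the phase around each vortex; with such a bound, polar integration about each $x_j^\varepsilon$ gives
$$\int_{\Omega_\varepsilon}\varepsilon^2|\nabla\varphi|^4 \leq C\varepsilon^2\sum_{j=1}^{k}\int_{\lambda\varepsilon}^{R}r^{-3}\,dr = O(1).$$

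The hardest step is therefore the sharp pointwise estimate $|\nabla\varphi| \lesssim 1/r$ away from vortices. A natural route is to rescale the system \eqref{eq:ELN} on dyadic annuli $\{2^m\lambda\varepsilon < |x-x_j^\varepsilon| < 2^{m+1}\lambda\varepsilon\}$, where both the smallness of $t$ (via \eqref{eq:83}) and the near-Laplacian structure of \eqref{eq:4} allow interior elliptic regularity to be applied after rescaling; the maximum principle for the phase (Proposition \ref{prop:max}) should supply the $L^\infty$ control of $\varphi$ needed to convert these scaled estimates into the desired $1/r$ pointwise bound.
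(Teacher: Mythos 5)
Your outer structure is right and matches the paper's: multiply \eqref{eq:ELN-t} by $t$, integrate over $\Omega_\varepsilon$, kill the boundary term on $\partial\Omega$ since $g\in\Gamma$, control the boundary terms on $\partial B_{\lambda\varepsilon}(x_j^\varepsilon)$ by the gradient bound \eqref{eq:grad} and the length $\sim\lambda\varepsilon$ of each circle, and then deal with the interior term $\int_{\Omega_\varepsilon}ct\,|\nabla\varphi|^2$. You correctly identify this interior term as the real obstacle, and your Young-inequality split reducing the problem to bounding $\varepsilon^2\int_{\Omega_\varepsilon}|\nabla\varphi|^4$ is a legitimate alternative to the Cauchy--Schwarz argument the paper uses.

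The gap is in how you propose to obtain $\varepsilon^2\int|\nabla\varphi|^4\le C$. You aim for a \emph{pointwise} estimate $|\nabla\varphi(x)|\lesssim 1/\dist(x,\{x_j^\varepsilon\})$ via rescaling on dyadic annuli plus the maximum principle of Proposition~\ref{prop:max}. This is unlikely to work at this stage of the argument, for two reasons. First, after rescaling by $\sigma=2^m\lambda\varepsilon$, the equation \eqref{eq:4} for $\hat\varphi$ carries the source term $\sigma^2\,\dfrac{\alpha_\varphi}{a}\dfrac{\hat t^2}{\varepsilon^2}=\dfrac{\alpha_\varphi}{a}\,(2^m\lambda)^2\hat t^2$, which has no pointwise bound independent of $m$: the only available control is that its unscaled $L^1$ norm is $O(1)$ by \eqref{eq:120}, and $L^1$ control of the right-hand side does not yield interior $L^\infty$ bounds on $\nabla\hat\varphi$. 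Second, even the $L^\infty$ bound on $\hat\varphi$ from Proposition~\ref{prop:max} needs care: the annulus is not simply connected, so you must cut it along a radius before invoking the maximum principle, and the cut introduces a jump of size $2\pi|d_j|$. While that particular issue is fixable, the source-term issue is not, and without it your elliptic-regularity step collapses. Note also that the $W^{1,p}$ bound ($p<2$) proved later in the paper, which is the integrated version of your $1/r$ heuristic, is established \emph{using} Proposition~\ref{prop:t}, so reaching for it here would be circular.

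The pointwise estimate is also overkill. All you need is $\|\nabla\varphi\|_{L^4(\Omega_\varepsilon)}=O(\varepsilon^{-1/2})$, and this follows from a much cheaper \emph{global} argument that the paper uses: by the Gagliardo--Nirenberg inequality in 2D, $\|\nabla u\|_4\lesssim \|u\|_{H^2}^{1/2}\|u\|_\infty^{1/2}$; the $L^\infty$ bound is Lemma~\ref{lem:infty}; and $\|u\|_{H^2}\lesssim 1+\varepsilon^{-2}\|\nabla W(u)\|_2\lesssim 1+\varepsilon^{-2}\|W(u)\|_1^{1/2}\lesssim \varepsilon^{-1}$, where the middle step uses the pointwise inequality $|\nabla W(z)|^2\le M W(z)$ on $B_{R_0}$ (a Glaeser-type estimate, see Remark~\ref{rem:glaeser}) and the last uses the Pohozaev bound \eqref{eq:Pohozaev}. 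This gives $\|\nabla u\|_4\lesssim\varepsilon^{-1/2}$, and on $\Omega_\varepsilon$ (where $|t|\le\delta_2$) $|\nabla\varphi|\lesssim|\nabla u|$ by \eqref{ha2}. Plugging this into your Young's inequality, or equivalently writing $\int|\nabla\varphi|^2|t|\le\|\nabla u\|_4^2\|t\|_2\lesssim \varepsilon^{-1}\cdot\varepsilon\,(\int t^2/\varepsilon^2)^{1/2}$ as the paper does, completes the argument without any local elliptic analysis near the vortices.
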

\begin{proof}
The proof uses the following pointwise inequality:
\begin{equation}
  \label{eq:112}
  |\nabla W(z)|^2\leq MW(z),\;\forall z\in B_{R_0}\,,
\end{equation}
for some $M>0$. The validity of \eqref{eq:112} for $z$ in a
neighborhood of $\Gamma$ follows from \eqref{ha3};
the extension to arbitrary $z\in B_{R_0}$ is clear (see also
Remark~\ref{rem:glaeser} below for a simple alternative argument valid
also for degenerate $W$).
 % which follows from a Lemma due to
% Dieudonn\'e~\cite{dieud}; see Remark below for a simple direct argument.
Arguing as in \cite[Ch.~V]{bbh94}, we obtain using the
Galgardo-Nirenberg inequality, \eqref{eq:112} and \eqref{eq:Pohozaev} that 
\be
\begin{aligned}
\label{eq:GN}
\|\nabla u\|_{L^4(\O)}\leq &C_1\|u\|^{1/2}_{H^2}\|u\|^{1/2}_\infty \leq C_2\|u\|^{1/2}_{H^2}\leq 
 C_3{\left\{ \frac{1}{\ve^2}\|\nabla W(u)\|_2+1\right\}}^{1/2}\\
 \leq &C_4{\left\{ \frac{1}{\ve^2}\|W(u)\|_1^{1/2}+1\right\}}^{1/2}
\leq \frac{C_5}{\varepsilon^{1/2}}.
\end{aligned}
\ee

Next we multiply \eqref{eq:ELN-t} by $t$ and integrate over $\O_\ve$.
Using \eqref{eq:GN}, \eqref{eq:1-1} (recall that
$\delta_2\leq\delta_1\leq\delta_0$) and \eqref{eq:120} we get
\be
\begin{aligned}
\label{eq:om}
  \int_{\Omega_\ve}\left[ |\nabla t|^2+\frac{c_1t^2}{\ve^2}\right]\leq &
   \widetilde C_1\int_{\Omega_\ve} |\nabla\va|^2|t|+\sum_{j=1}^k \int_{\partial
     B_{\lambda\ve}(x_j^\ve)}t\frac{\partial t}{\partial n}\\\leq
   &\widetilde C_1\left(\int_{\Omega_\ve}|\nabla
     u|^4\right)^{1/2}\left(\int_{\Omega_\ve}
     t^2\right)^{1/2}+\widetilde C_2\leq 
\widetilde C_1 C_5^2
\left(\int_{\Omega_\ve}\frac{t^2}{\varepsilon^2}\right)^{1/2}+\widetilde
C_2\leq \widetilde
C_3.
 \end{aligned}
 \ee
 
  For the bound of the boundary integrals we used the estimate $\d\left|\frac{\partial t}{\partial n}\right|\leq\frac{C}{\ve}$
  on $\partial B_{\lambda\ve}(x_j^\ve)$ (by \eqref{eq:grad}). The
  conclusion of the proposition is a direct consequence of \eqref{eq:om}.
  \end{proof}
  \begin{remark}
    \label{rem:glaeser}
  The inequality \eqref{eq:112} was proved by Dieudonn\'e in
  \cite{dieud}, in connection to his simplified 
  proof to a result of Glaeser~\cite{gla} about the square root of a
  nonnegative 
  $C^2$-function. 
  A variant of Dieudonn\'e's argument, valid for any $W\in C^2(\R^2)$, goes as follows. Fix a function
  $\xi\in C^\infty_c(\R^2;[0,1])$ such that $\xi\equiv1$ on $B_{R_0}$
  and set, in $\R^2$, $F(z):=\xi(z) W(z)+(1-\xi(z))|z|^2$. Note that $F$
  is a smooth nonnegative function on $\R^2$. Let
  \begin{equation*}
    K:=\frac{1}{2}\max_{z\in\R^2}\|D^2F(z)\|,
  \end{equation*}
where $\|A\|$ stands for the spectral norm of the matrix $A$.
 By Taylor formula
 \begin{equation}
\label{eq:113}
   0\leq F(z+h)\leq F(z)+\nabla F(z)\cdot h+K|h|^2,
 \end{equation}
for every $z,h\in\R^2$. Applying \eqref{eq:113} for $\d h:=-\frac{\nabla
  F(z)}{2K}$ yields $|\nabla F(z)|^2\leq 4K F(z)$, whence \eqref{eq:112}.
  \end{remark}
  \bc
  \l{hk4}
  Let $u_\ve$ satisfy \eqref{eq:EL}. Then
  \be
  \l{hk5}
  \int_\O \left|\na \left(\dist (u_\ve, \Gamma)\right)\right|^2\le C,\ \fo \ve>0.
  \ee
  
  In particular, in the GL case, i.e., $W(u)=(1-|u|^2)^2/4$, we have
  \be
  \l{hk50}
  \int_\O \big|\na |u_\ve|\big|^2\le C, \fo \ve>0.
  \ee
  \ec
  \begin{proof}
    Since 
    \bes
    \d\|\nabla (\dist (u_\ve, \Gamma) )\|_\infty\le \|\nabla u_\ve\|_\infty \leq\frac{C}{\ve}
    \ees
    
    (by \eqref{eq:grad}), we have
    \begin{equation}
\label{eq:outom}
\int_{\bigcup_{j=1}^k B_{\lambda\ve}(x_j^\ve)}\left|\nabla \left(\dist (u_\ve, \Gamma)\right)\right|^2\leq C.
\end{equation}
The result of the corollary readily follows from Proposition \ref{prop:t} and \eqref{eq:outom}. (Recall that, in $\O_\ve$, we have $\dist (u_\ve, \Gamma)=|t_\ve|$.)

In the GL case, it suffices to note that $\dist (u_\ve, \Gamma)=1-|u_\ve|$.  
\end{proof}

\subsection{A  O($|\text{log }\ve|$) bound for the energy}
\label{subsec:log}
The main result of this section is the following.
\begin{proposition}
\label{prop:L2}
We have $E_\ve(u_\ve)\leq C(|\log\ve|+1),\ \forall\,\ve>0.$
\end{proposition}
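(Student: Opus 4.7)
The plan is to decompose $E_\ve(u_\ve)$ and estimate each piece. First, by Lemma \ref{lem:pohozaev} we already have $\int_\Omega W(u_\ve)/\ve^2 \leq C$, so only the Dirichlet part requires work. Inside each bad disc $B_{\lambda\ve}(x_j^\ve)$, the gradient bound $|\nabla u_\ve|\leq C/\ve$ combined with the area $\pi(\lambda\ve)^2$ gives $O(1)$ per disc, hence $O(1)$ in total since $k$ is uniformly bounded. In $\Omega_\ve$ we have $|t_\ve|\leq\delta_2\leq\delta_1$, so the coordinates $(t_\ve,\varphi_\ve)$ are defined and \eqref{ha2} gives $|\nabla u_\ve|^2\leq (1+O(\delta_2))|\nabla\varphi_\ve|^2+|\nabla t_\ve|^2$. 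By Proposition \ref{prop:t}, the $|\nabla t_\ve|^2$ contribution is $O(1)$, so everything reduces to establishing
\begin{equation*}
\int_{\Omega_\ve}|\nabla\varphi_\ve|^2 \leq C(|\log\ve|+1).
\end{equation*}

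By Lemma \ref{hj2}, after possibly merging discs that are $O(\ve)$-close and relabelling, we may assume that the centers satisfy $|x_i^\ve-x_j^\ve|\geq 4r_0$ for $i\neq j$ and $\dist(x_j^\ve,\partial\Omega)\geq 4r_0$ for some $r_0>0$ independent of $\ve$. The annuli $A_j^\ve:=B_{r_0}(x_j^\ve)\setminus\overline{B_{\lambda\ve}(x_j^\ve)}$ are then mutually disjoint and contained in $\Omega_\ve$, and we split
\begin{equation*}
\int_{\Omega_\ve}|\nabla\varphi_\ve|^2 = \sum_j\int_{A_j^\ve}|\nabla\varphi_\ve|^2 + \int_{\Omega_\ve\setminus\bigcup_j A_j^\ve}|\nabla\varphi_\ve|^2.
\end{equation*}
The second term is taken over a set whose geometry is $\ve$-independent, and on which $u_\ve$ stays in a tubular neighbourhood of $\Gamma$ with a well-defined phase. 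I would control it by $O(1)$ by combining the Pohozaev bound on $\int_{\partial\Omega}|\partial_n u_\ve|^2$ from Lemma \ref{lem:pohozaev} with standard elliptic estimates away from the bad discs, plus Corollary \ref{rem:max} to pin the oscillation of $\varphi_\ve$.

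The core of the proof is then to bound $\int_{A_j^\ve}|\nabla\varphi_\ve|^2\leq C(d_j^2|\log\ve|+1)$. Here the two new tools from the introduction enter. First, a good-ray selection: by a Fubini argument applied to $\int_{\lambda\ve}^{r_0}\!\int_{0}^{2\pi}|\nabla u_\ve|^2\, rdrd\theta$ against the weight $dr/r$ on $[\lambda\ve,r_0]$, combined with the Pohozaev-type a priori control on $|\nabla u_\ve|$ propagated inward, one picks an angle $\theta_j^\ve$ such that along the radial segment $\gamma_j:r\mapsto x_j^\ve+re^{\im\theta_j^\ve}$ the integral $\int_{\lambda\ve}^{r_0}|\nabla u_\ve(\gamma_j(r))|^2\,r\,dr$ is controlled by the average of the total annular energy, and such that $u_\ve$ stays in $\Gamma_{\delta_1}$ along $\gamma_j$. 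Cutting $A_j^\ve$ along $\gamma_j$ gives a simply connected region $\widetilde A_j^\ve$ on which $\varphi_\ve$ is globally defined, with a $2\pi d_j$ jump across the cut.

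Second, on $\widetilde A_j^\ve$ the maximum principle for the phase (Proposition \ref{prop:max}) bounds $\varphi_\ve-\frac{m t_\ve^2}{2}$ and $\varphi_\ve+\frac{m t_\ve^2}{2}$ by their boundary values. The boundary values on the two sides of the cut $\gamma_j$ are controlled via the good-ray estimate (integrating $\partial_r\varphi_\ve$ along $\gamma_j$), and the values on $\partial B_{\lambda\ve}(x_j^\ve)$ via $|\nabla u_\ve|\leq C/\ve$. Once a pointwise bound of the form $|\varphi_\ve(x)-d_j\,\theta(x)|\leq C$ is obtained on $\widetilde A_j^\ve$ (with $\theta$ the angular coordinate about $x_j^\ve$), passing to polar coordinates yields $\int_{A_j^\ve}|\nabla\varphi_\ve|^2\leq C d_j^2\int_{\lambda\ve}^{r_0}dr/r+C\leq C(|\log\ve|+1)$, which finishes the proof. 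The main obstacle — the reason this section is the technical heart of the paper — is precisely that, unlike for the GL energy where \eqref{eq:108} yields a conserved current, here one has no direct way to transport boundary control into the interior: both ingredients, the good-ray selection and the maximum principle for $\varphi\pm mt^2/2$, are genuinely needed in tandem to turn the Pohozaev bound into a pointwise phase estimate on each annulus.
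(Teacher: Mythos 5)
Your proposal correctly identifies the two new tools from the introduction (good rays and the phase maximum principle) and the reduction to a phase--energy bound, but the annulus-by-annulus decomposition has several genuine gaps that the paper's global argument is designed to avoid.

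First, the assertion that ``after possibly merging discs that are $O(\ve)$-close'' the centers satisfy $|x_i^\ve-x_j^\ve|\ge 4r_0$ and $\dist(x_j^\ve,\p\O)\ge 4r_0$ for an $\ve$-independent $r_0$ does not follow from Lemma~\ref{hj2}. That lemma only gives $\dist(x_j^\ve,\p\O)/\ve\to\infty$; the distances could go to zero at some intermediate rate, e.g.\ $\ve^{1/2}$, and the same goes for mutual distances between bad discs. Vortices can a priori cluster at scales strictly between $\ve$ and $O(1)$, so the annuli $A_j^\ve$ need not be disjoint, need not be contained in $\O_\ve$, and the complementary region $\O_\ve\setminus\bigcup_j A_j^\ve$ need not have $\ve$-independent geometry. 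The paper sidesteps this entirely by never separating the vortices: the good rays $R_j$ run from $\p B_{\lambda\ve}(x_j^\ve)$ all the way to $\p\O$, the maximum principle is applied on the single slit domain $\omega_\ve=\O_\ve\setminus\bigcup R_j$, and the final integration by parts is over all of $\O_\ve$.

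Second, and more seriously, the step ``once a pointwise bound $|\va_\ve(x)-d_j\theta(x)|\le C$ is obtained, passing to polar coordinates yields $\int_{A_j^\ve}|\nabla\va_\ve|^2\le Cd_j^2\int dr/r+C$'' is not valid as stated: an $L^\infty$ bound on a function gives no control on its gradient. The paper's mechanism for converting the $L^\infty$ bound into a gradient bound is essential and is missing here --- it consists in multiplying the equation \eqref{eq:eqeta} satisfied by the auxiliary phase $\eta$ (defined via \eqref{eq:84}) by $\eta$ itself, integrating by parts over $\O_\ve$, and controlling the boundary terms with the Pohozaev identity, \eqref{eq:grad} and the $L^\infty$ bound. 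This yields $\|\nabla\eta\|_{L^2(\O_\ve)}^2\le C\bigl(|\log\ve|^{1/2}\|\nabla\eta\|_{L^2(\O_\ve)}+1\bigr)$; note that the unknown $\|\nabla\eta\|_{L^2}$ appears on both sides (the good-ray estimate \eqref{eq:ray} and the $L^\infty$ bound of Lemma~\ref{lem:bound-eta} both involve it), and the proof closes by solving this self-improving inequality. Your sketch also has this circular dependence in the ray selection but does not explain how it is resolved. So the overall approach is in the right spirit, but the three load-bearing steps --- no assumed separation, the PDE-driven passage from $L^\infty$ to $\dot H^1$, and the resolution of the circularity --- need to be filled in, and filling them in essentially reproduces the paper's global argument.
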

 In view of Proposition \ref{prop:t}, of  \eqref{eq:grad} and \eqref{eq:Pohozaev}, it suffices to obtain the following  bound for the energy of the phase $\va$:
 \be
 \l{hk40}
 \int_{\O_\ve}|\na \va|^2\le C(|\log\ve|+1),\ \fo \ve>0.
 \ee
 
 Since $\va$
  is defined only locally in $\O_\ve$ (only its gradient $\nabla\varphi$ is
  defined globally), it will be convenient to introduce a new function, which is globally defined in $\O_\ve$. 
 \begin{definition}
  Let $\Pi$ denote the nearest point projection on $\Gamma$ in a $\delta_2$-tubular neighborhood of $\Gamma$.
  The $\so$-valued map 
  \bes
  \O_\ve\ni z\mapsto \tau^{-1}(\Pi(u))\cdot\prod_{j=1}^k \left(\frac{z-x_j}{|z-x_j|}\right)^{-d_j}
  \ees 
(with $d_j$ as in \eqref{hk7}) has zero degree around
   each of the holes $B_{\lambda\ve}(x_j)$, $j=1,\ldots, k$. Hence, there exists a unique (up to addition of an integer multiple of $2\pi$) scalar function $\eta=\eta_\ve$ such that
   \begin{equation}
\label{eq:84}
\tau^{-1}(\Pi(u))=e^{\im \eta}\prod_{j=1}^k \left(\frac{z-x_j}{|z-x_j|}\right)^{d_j}\text{ in }\O_\ve.
\end{equation}
\end{definition}
By adding an appropriate multiple of $2\pi$ we may assume that 
\begin{equation}
\label{eq:fix}
\min_{\partial\O}\eta_\varepsilon\in[0,2\pi).
\end{equation}
Since $g$ is smooth, we deduce from \eqref{eq:fix} that
\begin{equation}
  \label{eq:107}
 \|\eta_\varepsilon\|_{L^\infty(\partial\Omega)}\leq C(g).
\end{equation}

 Our first step consists of proving an $L^\infty$ bound for $\eta$. In order
 to be able to apply the maximum principle of \rprop{prop:max} we will remove from 
  $\O_\ve$ a collection of rays, connecting the boundaries
 of  the holes $B_{\lambda\ve}(x_j)$, $j=1,\ldots, k$,  to the boundary of $\Omega$. The choice of these \enquote{good
 rays} will depend on energy considerations.
 For any $j=1,\dots,k$ and $\alpha\in[0,2\pi)$, we let
 $D_j(\alpha)$ be the half-line 
 \bes
 D_j(\alpha):=\{ x_j^\ve +r\, e^{\im \alpha};\, r\in[\lambda\varepsilon,\infty)\},
 \ees
and then set
\bes
R_j(\alpha):=D_j\cap\Omega_\ve.
\ees

% Note that $R_j(\alpha)\subset \overline\O\setminus B_{\lambda\ve}( x_j^\ve)$ connects $\p B_{\lambda\ve}( x_j^\ve)$ to $\p\O$. 

\begin{lemma}
\label{lem:alpha}
For each $j=1,\ldots,k$ and $0<\ve<1/2$, there exists $\alpha_j=\alpha_j (\ve)\in[0,2\pi)$ such that $R_j:=R_j(\alpha_j)$ satisfies
\begin{equation}
\label{eq:ray}
\int_{R_j} \left|\frac{\partial\eta}{\partial r}\right|\leq C|\log\ve|^{1/2}\|\nabla\eta\|_{L^2(\O_\ve)}.
\end{equation}
\end{lemma}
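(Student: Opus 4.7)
The plan is a standard averaging/Fubini argument in polar coordinates centered at $x_j^\ve$, together with Cauchy--Schwarz. The key observation is that the $|\log\ve|^{1/2}$ factor arises from the logarithmically divergent integral $\int |z-x_j^\ve|^{-2}\,dz$ on an annulus of inner radius $\lambda\ve$.

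Writing $z=x_j^\ve+re^{\im\alpha}$, one has $dz=r\,dr\,d\alpha$ and $|\partial_r\eta|\le|\nabla\eta|$. Using Fubini, a change of variables, and the fact that $\Omega_\ve$ is contained in an annulus $B_R(x_j^\ve)\setminus B_{\lambda\ve}(x_j^\ve)$ for some $R$ depending only on $\Omega$, I would compute
\begin{equation*}
\int_0^{2\pi}\!\!\int_{R_j(\alpha)}\!\left|\frac{\partial\eta}{\partial r}\right|\!dr\,d\alpha
=\int_{\Omega_\ve}\left|\frac{\partial\eta}{\partial r}\right|\frac{dz}{|z-x_j^\ve|}
\le\|\nabla\eta\|_{L^2(\Omega_\ve)}\left(\int_{\Omega_\ve}\!\frac{dz}{|z-x_j^\ve|^2}\right)^{\!1/2}\!.
\end{equation*}
The last factor is bounded by $\bigl(2\pi\log(R/(\lambda\ve))\bigr)^{1/2}\le C|\log\ve|^{1/2}$ for $\ve<1/2$. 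Hence the mean of $\int_{R_j(\alpha)}|\partial_r\eta|\,dr$ over $\alpha\in[0,2\pi)$ is at most $\frac{C}{2\pi}|\log\ve|^{1/2}\|\nabla\eta\|_{L^2(\Omega_\ve)}$, and by the mean value principle there exists $\alpha_j=\alpha_j(\ve)\in[0,2\pi)$ for which \eqref{eq:ray} holds.

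There is no real obstacle here; the only minor point to watch is that $R_j(\alpha)=D_j(\alpha)\cap\Omega_\ve$ may be disconnected when the half-line $D_j(\alpha)$ crosses another bad disc $B_{\lambda\ve}(x_\ell^\ve)$, $\ell\neq j$. However, since the integrand $|\partial_r\eta|$ is nonnegative, the Fubini identity above remains valid without modification, so this causes no issue in the argument.
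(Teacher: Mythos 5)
Your proof is correct and uses essentially the same tools as the paper's (polar averaging around $x_j^\ve$ plus a Cauchy--Schwarz with a $1/r$ weight to manufacture the $|\log\ve|^{1/2}$), just in the reverse order: you apply Cauchy--Schwarz over the whole annulus and then average in $\alpha$, while the paper first selects $\alpha_j$ so that $\int_{R_j(\alpha_j)}|\nabla\eta|^2\,r\,dr\le\frac{1}{2\pi}\|\nabla\eta\|_{L^2(\Omega_\ve)}^2$ and then applies a one-dimensional Cauchy--Schwarz along that single ray. The two arrangements are equivalent, and your remark that the Fubini step is insensitive to $R_j(\alpha)$ being disconnected is also correct.
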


Here, $\p/\p r$ stands for the tangential derivative along $R_j$.
\begin{proof}
Since 
\begin{equation*}
\int_{\Omega_\ve} |\nabla\eta|^2\ge\int_0^{2\pi}\left(\int_{R_j(\alpha)}|\nabla\eta|^2\,rdr\right)\,d\alpha,
\end{equation*}
 there exists $\alpha_j\in[0,2\pi)$ such that
 \begin{equation}
 \label{eq:choice}
\int_{R_j(\alpha_j)}|\nabla\eta|^2\,rdr\leq \frac{1}{2\pi} \|\nabla\eta\|^2_{L^2(\Omega_\ve)}.
\end{equation}

Therefore, 
%Hence, denoting $\rho_j=\sup\{ r;\,x_j+re^{\im \alpha_j} \in R_j(\alpha_j)\}$, we have by \eqref{eq:choice},
\begin{equation*}
\int_{R_j(\alpha_j)} \left|\frac{\partial\eta}{\partial r}\right|\leq \left(\int_{\lambda\ve}^{\text{diam}\, \O}\frac{dr}{r}\right)^{1/2} 
\left(\int_{R_j(\alpha_j)} \left|\frac{\partial\eta}{\partial r}\right|^2\,rdr\right)^{1/2}\leq C|\log\ve|^{1/2}\|\nabla\eta\|_{L^2(\Omega_\varepsilon)}.\qedhere
\end{equation*}
\end{proof}
 Next, we denote 
 $\omega_\ve:=\Omega_\ve\setminus\displaystyle\bigcup_{j=1}^k R_j.$
 
% Note that each $R_j$ might intersect some other $R_i$, or a disc
% $ B_{\lambda\varepsilon}(x_i)$, for $i\neq j$. Therefore, the open set
% $\omega_\ve$
% might be disconnected. However,  the number of its connected components is
% bounded, with a bounded depending on $k$ only.

  For each $j$, let $\theta_j$ denote the polar coordinate around the
  point $x_j$, taking values in $[\alpha_j,\alpha_j+2\pi)$. Then the function
 \begin{equation}
 \label{eq:Theta}
\Theta=\Theta_\ve:=\sum_{j=1}^k d_j\theta_j,
\end{equation}
is  {\it smooth} in  $\omega_\ve$ and satisfies
\begin{equation}
\label{eq:bound-theta}
\|\Theta\|_{L^\infty(\omega_\ve)}\leq 4\pi \sum_{j=1}^k |d_j|.
\end{equation}
We define $\va=\va_\ve:=\eta+\Theta$ in $\omega_\ve$. Note that
\begin{equation*}
\tau^{-1}(\Pi(u))=e^{\im \eta}\prod_{j=1}^k \left(\frac{z-x_j}{|z-x_j|}\right)^{d_j}=e^{\im (\Theta+\eta)}=e^{\im \va}\text{ in }\omega_\ve,
\end{equation*}
so that $\va$ is a well-defined phase of $u$ in $\omega_\ve$. 

%Although
%$\varphi$ is not well-defined on $\partial\omega_\varepsilon$ as a
%single-valued function, it can be defined
% also on $\partial \omega_\ve$  in terms of one sided limits.
\begin{lemma}
\label{lem:pomega}
We have
\begin{equation}
\label{ehl1}
\|\eta\|_{L^\infty(\partial\omega_\ve)}\leq C\left( |\log\ve|^{1/2}\|\nabla\eta\|_{L^2(\O_\ve)}+1\right)
\ee
and
\be
\label{eq:bound-eta-phi}
\limsup_{\delta\to 0}\, \sup\{ |\va (x)|;\, x\in\omega_\ve,\, \dist (x, \p\omega_\ve)\le\delta\} 
\leq C\left( |\log\ve|^{1/2}\|\nabla\eta\|_{L^2(\O_\ve)}+1\right).
\end{equation}
\end{lemma}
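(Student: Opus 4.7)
The plan is to bound $|\eta|$ separately on the three parts of $\partial\omega_\ve$, namely the outer boundary $\partial\Omega$, the inner circles $\partial B_{\lambda\ve}(x_j^\ve)$, and the two-sided slits $R_j$; the second estimate on $\varphi$ will then follow from \eqref{ehl1} using $\varphi=\eta+\Theta$ together with the a priori bound \eqref{eq:bound-theta} on $\Theta$. On $\partial\Omega$ the desired bound is already furnished by \eqref{eq:107}.

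On each slit $R_j$, I plan to integrate $\partial\eta/\partial r$ starting from its endpoint $y_j\in\partial\Omega$. Since $\eta$ is single-valued and continuous on $\Omega_\ve$, for every $x\in R_j$,
$$|\eta(x)|\leq |\eta(y_j)|+\int_{R_j}\left|\frac{\partial\eta}{\partial r}\right|\leq C(g)+C|\log\ve|^{1/2}\|\nabla\eta\|_{L^2(\Omega_\ve)},$$
by \eqref{eq:107} and \rlemma{lem:alpha}. In particular, the same estimate holds at the other endpoint $z_j:=R_j\cap\partial B_{\lambda\ve}(x_j^\ve)$.

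To extend this bound to the whole circle $\partial B_{\lambda\ve}(x_j^\ve)$, I will exploit the fact that its circumference $2\pi\lambda\ve$ is small enough to absorb the coarse estimate $|\nabla u_\ve|\leq C/\ve$ from \rlemma{lem:grad}. Differentiating \eqref{eq:84} on $\partial B_{\lambda\ve}(x_j^\ve)$ yields
$$\nabla\eta=\nabla\arg(\tau^{-1}\Pi(u_\ve))-\sum_{\ell=1}^{k} d_\ell\nabla\theta_\ell,$$
and, using that $u_\ve$ remains in a $\delta_2$-neighborhood of $\Gamma$ on the circle (so that $\tau^{-1}\circ\Pi$ is smooth on the relevant image), together with $|\nabla\theta_\ell|=1/|z-x_\ell^\ve|=O(1/\ve)$ (from the $4\lambda\ve$-separation of the bad discs), one obtains $|\nabla\eta|\leq C/\ve$ on $\partial B_{\lambda\ve}(x_j^\ve)$. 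Since any point on this circle is joined to $z_j$ by an arc of length $\leq\pi\lambda\ve$, the oscillation of $\eta$ on $\partial B_{\lambda\ve}(x_j^\ve)$ is bounded by a constant independent of $\ve$; combined with the bound at $z_j$, this yields \eqref{ehl1}.

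Finally, for \eqref{eq:bound-eta-phi}, since $\varphi=\eta+\Theta$ in $\omega_\ve$ and $|\Theta|\leq 4\pi\sum_j|d_j|$ throughout $\omega_\ve$ by \eqref{eq:bound-theta}, it suffices to note that $\eta$ is continuous on $\overline{\Omega_\ve}$ and that every point of $\partial\omega_\ve$ (including both sides of each slit $R_j$) is accessible as a limit from $\omega_\ve$. Hence, as $\dist(x,\partial\omega_\ve)\to 0$ with $x\in\omega_\ve$, the value $|\varphi(x)|$ is asymptotically controlled by $\|\eta\|_{L^\infty(\partial\omega_\ve)}+4\pi\sum_j|d_j|$, which by \eqref{ehl1} gives the claim. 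I expect the main care to be in securing the gradient bound $|\nabla\eta|=O(1/\ve)$ on the small circles using only the degree information and the geometry of the bad discs; once that is in hand, the arc-length cancellation renders the rest of the argument essentially automatic.
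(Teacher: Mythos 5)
Your argument follows the same route as the paper's own proof: control $\eta$ on $\partial\Omega$ via \eqref{eq:107}, along the good rays via \rlemma{lem:alpha}, and on the small circles $\partial B_{\lambda\ve}(x_j^\ve)$ via the Lipschitz bound \eqref{eq:grad}, then transfer the bound from $\eta$ to $\va=\eta+\Theta$ using \eqref{eq:bound-theta}. The spelled-out derivation of $|\nabla\eta|=O(1/\ve)$ on the small circles from \eqref{eq:84} is a harmless elaboration of what the paper summarizes as ``by \eqref{eq:grad}.''

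There is one small gap you should close, which the paper explicitly flags. The ray $R_j=D_j(\alpha_j)\cap\Omega_\ve$ need not be connected: before hitting $\partial\Omega$ it may traverse other bad discs $B_{\lambda\ve}(x_\ell^\ve)$, $\ell\ne j$, which are removed from $\Omega_\ve$. In that case the inequality $|\eta(x)|\le|\eta(y_j)|+\int_{R_j}|\partial\eta/\partial r|$ is not obtained directly by the fundamental theorem of calculus along $R_j$, since $y_j$ and $x$ may lie in different components of $R_j$. The fix is exactly the tool you already develop: bridge each gap by an arc of the intervening circle $\partial B_{\lambda\ve}(x_\ell^\ve)$, on which the oscillation of $\eta$ is $O(1)$ by \eqref{eq:grad}. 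Since the number of bad discs is at most $k$, this adds a bounded constant and leaves \eqref{ehl1} intact. With that remark inserted, the proof coincides with the paper's.
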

\begin{proof}

We may assume that $0<\ve<1/2$. 
 Let $r_j(\alpha)$ be the smallest $r>\lambda\ve$ such that $x_j^\ve +r\, e^{\im \alpha}\in\p\O$. 
By \rlemma{lem:alpha} and \eqref{eq:grad}, for each $x\in [x_j^\ve +\lambda\, e^{\im \alpha}, x_j^\ve +r_j(\alpha)\, e^{\im \alpha}]$ we have
\be
\l{hl2}
\left|\eta(x)-\eta\left (x_j^\ve+r_j(\alpha_j)\, e^{\im\alpha_j}\right)\right|\le C\left( |\log\ve|^{1/2}\|\nabla\eta\|_{L^2(\O_\ve)}+1\right).
\ee
Note that \eqref{eq:grad} is needed in case
$R_j$ intersects some of the other discs
$\{B_{\lambda\varepsilon(x_l^\ve)}\}_{l\ne j}$ before reaching
$\partial\Omega$ for the first time, at $x_j^\ve+r_j(\alpha_j)$.
In particular, the following holds:
\be
\l{hl20}
\left|\eta(x_j^\ve+\lambda\ve\, e^{\im\alpha_j})-\eta\left (x_j^\ve+r_j(\alpha_j)\, e^{\im\alpha_j}\right)\right|\le C\left( |\log\ve|^{1/2}\|\nabla\eta\|_{L^2(\O_\ve)}+1\right).
\ee

 On the other hand, by 
\eqref{eq:grad} we have
\be
\l{hl3}
|\eta (x)-\eta(y)|\le C, \ j=1,\ldots, k,\ \fo x, y\in\p B_{\lambda\ve}(x_j^\ve).
\ee

 We obtain \eqref{ehl1} by combining \eqref{hl2}--\eqref{hl3} with \eqref{eq:107}. 

Finally, \eqref{eq:bound-eta-phi} follows from \eqref{ehl1} and \eqref{eq:bound-theta}.
\end{proof}

%$$
%\Var_{R_j}\eta :=\max_{R_j}\eta-\min_{R_j}\eta\leq C |\log\ve|^{1/2}\|\nabla\eta\|_{L^2(\O_\ve)}.
%$$
% On each $\partial B_{\lambda\ve}(x_j)$ we have $|\nabla\Theta|\leq \frac{C}{\ve}$ and $\nabla\va|\leq \frac{C}{\ve}$ (by \eqref{eq:grad}), hence
% $\Var_{\partial B_{\lambda\ve}(x_j)}\eta\leq C$. Using \eqref{eq:107} we deduce  \eqref{eq:bound-eta-phi} for $\eta$, and since $\Theta$ is bounded, the same holds for $\va$.
%\end{proof}
\begin{lemma}
  \label{lem:bound-eta}
  We have 
$\|\eta\|_{L^\infty(\O_\ve)}\leq C\left( |\log\ve|^{1/2}\|\nabla\eta\|_{L^2(\O_\ve)}+1\right)$.
\end{lemma}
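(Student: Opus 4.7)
The plan is to apply the phase maximum principle of \rprop{prop:max} on the simply connected domain $\omega_\ve$ (which is simply connected precisely because we excised the rays $R_j$ from $\Omega_\ve$), deduce an $L^\infty$ bound for the single-valued phase $\va=\eta+\Theta$ on $\omega_\ve$ in terms of its boundary values, and then convert this into a bound for $\eta$ itself by using the uniform bound \eqref{eq:bound-theta} on $\Theta$.

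More precisely, I would proceed as follows. Since $\delta_2\le\delta_1$, by \eqref{eq:83} we have $\dist(u_\ve,\Gamma)\le\delta_1$ on $\overline{\omega_\ve}$, so the hypotheses of \rprop{prop:max} are met for $u_\ve$ on $\omega_\ve$. Applying \eqref{eq:max-pr} yields
\begin{equation*}
\sup_{\overline{\omega_\ve}}|\va_\ve|\le \sup_{\partial\omega_\ve}|\va_\ve|+m\delta_1^2.
\end{equation*}
Combining this with \eqref{eq:bound-eta-phi} from \rlemma{lem:pomega} gives
\begin{equation*}
\|\va_\ve\|_{L^\infty(\omega_\ve)}\le C\bigl(|\log\ve|^{1/2}\|\nabla\eta\|_{L^2(\O_\ve)}+1\bigr).
\end{equation*}

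Since $\eta_\ve=\va_\ve-\Theta_\ve$ in $\omega_\ve$, and \eqref{eq:bound-theta} provides a uniform $L^\infty$ bound on $\Theta_\ve$ depending only on the (fixed) degrees $d_j$, the triangle inequality produces
\begin{equation*}
\|\eta_\ve\|_{L^\infty(\omega_\ve)}\le C\bigl(|\log\ve|^{1/2}\|\nabla\eta\|_{L^2(\O_\ve)}+1\bigr).
\end{equation*}
Finally, $\eta_\ve$ is continuous (indeed smooth) on the whole of $\O_\ve$ via its intrinsic definition \eqref{eq:84}, and $\omega_\ve$ is obtained from $\O_\ve$ by removing a finite union of one-dimensional rays. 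Hence $\omega_\ve$ is dense in $\O_\ve$, and the bound extends to $\O_\ve$ by continuity, which is exactly the desired conclusion.

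The main subtle point is the correct interpretation of the boundary trace of $\va_\ve$. Across each cut $R_j$, the function $\Theta_\ve$ jumps by $2\pi d_j$, so $\va_\ve$ is genuinely two-valued on $R_j$, and the estimate \eqref{eq:bound-eta-phi} is stated as a limsup as one approaches $\partial\omega_\ve$ from within $\omega_\ve$. What makes the argument work is that (i) $\eta_\ve$ is single-valued and continuous on $\O_\ve$, so its values from the two sides of each ray agree, and the variation along the ray is controlled by Lemma \ref{lem:alpha}; and (ii) the jumps of $\Theta_\ve$ across the cuts are uniformly bounded by \eqref{eq:bound-theta}. Thus the $\sup_{\partial\omega_\ve}|\va_\ve|$ entering the maximum principle is genuinely controlled by the right-hand side of \eqref{ehl1} plus a harmless additive constant, and pushing this bound through the maximum principle is straightforward.
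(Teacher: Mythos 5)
Your proposal is correct and follows the same route as the paper: apply the maximum principle of \rprop{prop:max} to $\va=\eta+\Theta$ on $\omega_\ve$, combine with the boundary estimate \eqref{eq:bound-eta-phi} from \rlemma{lem:pomega}, and convert to a bound for $\eta$ via \eqref{eq:bound-theta}. The one place where the paper is more careful than your write-up is exactly the subtlety you flag at the end: since $\va$ does not extend continuously to $\overline{\omega_\ve}$ across the cuts, the paper applies \rprop{prop:max} on each component of the $\delta$-interior $\{x\in\omega_\ve;\,\dist(x,\p\omega_\ve)>\delta\}$ and then lets $\delta\to 0$ (at fixed $\ve$), which is precisely the formal version of the limsup in \eqref{eq:bound-eta-phi} that you invoke.
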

\begin{proof}
We apply the maximum principle in \rprop{prop:max} to $\va$ on each component of the open set
  $\{ x\in\omega_\ve;\, \dist(x, \p\omega_\ve)>\delta\}$, then we let $\delta\to 0$ (with fixed $\ve$). Using \eqref{eq:bound-eta-phi}, we find that 
  \be
  \l{hl8}
  \sup_{\omega_\ve}|\va|\le C \left( |\log\ve|^{1/2}\|\nabla\eta\|_{L^2(\O_\ve)}+1\right).
  \ee
  
   The bound for $\eta$ is a consequence of \eqref{eq:bound-theta} and \eqref{hl8}. 
%   . More
%  precisely, in order to apply \rprop{prop:max} to $\varphi$, which is
%  defined on the simply connected domain
%  $\omega_\ve$, but does not have a  continuous extension to $\overline{\omega_\ve}$, we
%  argue by approximation. We approximate $\omega_\varepsilon$ by 
%  open sets of the form $\Omega_\varepsilon\setminus \bigcup_{j=1}^k
%  R_{j,\rho}$, where $R_{j,\rho}$ is a closed tube of size $\rho$
%  around $R_j$, for some small $\rho>0$. We apply \rprop{prop:max} to $\varphi$ on each
%  connected component of $\Omega_\varepsilon\setminus \bigcup_{j=1}^k
%  R_{j,\rho}$ and pass to the limit $\rho\to0$.  
\end{proof}
\begin{proof}[Proof of \rprop{prop:L2}]
By \eqref{eq:ELN}, $\eta$ satisfies in $\O_\ve$
\begin{equation}
  \label{eq:eqeta}
\begin{aligned}
  -\div(a\nabla\eta)=&-\div(a\nabla\va)+\div(a\nabla\Theta)=b|\nabla\va|^2-\frac{\alpha_\va
    t^2}{\ve^2}+\div(a\nabla\Theta)
   %\\
=
%&
f+\div(a\nabla\Theta),
\end{aligned}
\end{equation}
  with
  \begin{equation}
    \label{eq:f}
    f=f_\varepsilon:=b\,|\nabla\va|^2-\frac{\alpha_\va t^2}{\ve^2}.
  \end{equation}
Above   we denoted by $\nabla\Theta$ the vector field 
\bes
\nabla\Theta=\sum_{j=1}^k
d_j\nabla\theta_j=
\sum_{j=1}^k d_j\frac{{(x-x_j^\varepsilon)}^\perp}{|x-x_j^\varepsilon|^2},
\ees
 which is smooth in $\R^2\setminus\{x_1^\ve,\ldots,x_k^\ve\}$. Here we
 used the notation $V^\perp=(-v_2,v_1)$ for a vector
 $V=(v_1,v_2)\in\R^2$. We claim that 
 \begin{equation}
   \label{eq:114}
   \|f_\ve\|_{L^1(\Omega_\varepsilon)}\leq C.
 \end{equation}
Indeed,  the second term on the
right-hand side of \eqref{eq:f} is bounded in $L^1(\O_\ve)$ by \rprop{prop:t}. The $L^1$ boundedness of the first term $b\, |\na\va|^2$  follows from the calculation \eqref{eq:om} and the inequality \eqref{eq:1-3}. 

%the first term we use the argument used in \eqref{eq:om}.
Multiplying \eqref{eq:eqeta} by $\eta$ and integrating yields
\be
\label{eq:int-eta}
\begin{aligned}
  \int_{\O_\ve} a|\nabla\eta|^2=&\int_{\O_\ve}f\eta-\int_{\O_\ve}a\nabla\Theta\cdot\nabla\eta+\int_{\partial\O_\ve}a\, \frac{\partial\va}{\partial n}\eta\\
\leq &\|f\|_1\|\eta\|_\infty+C|\log\ve|^{1/2}\|\nabla\eta\|_{L^2(\O_\ve)}+C\|\eta\|_\infty
%\\
\leq  
%& 
C\left( |\log\ve|^{1/2}\|\nabla\eta\|_{L^2(\O_\ve)}+1\right).
\end{aligned}
\ee

The first inequality in \eqref{eq:int-eta} uses \eqref{eq:Pohozaev} on
$\partial\Omega$ and \eqref{eq:grad} on $\partial
B_{\lambda\ve}(x_j^\ve)$. The second  inequality follows from  \rlemma{lem:bound-eta}.

% and for the bound for the boundary integral we used \eqref{eq:Pohozaev} on $\partial\Omega$ and \eqref{eq:grad} on $\partial B_{\lambda\ve}(x_j)$.
From \eqref{eq:int-eta} we get
\begin{equation}
  \label{eq:L2-eta}
  \|\nabla\eta\|^2_{L^2(\O_\ve)}\leq C(|\log\ve|+1),
\end{equation}
and therefore 
\be
\l{hm1}
\|\nabla\va\|^2_{L^2(\O_\ve)}\leq 2\left( \|\nabla\Theta\|^2_{L^2(\O_\ve)}+ \|\nabla\eta\|^2_{L^2(\O_\ve)}\right)\leq C(|\log\ve|+1).
\ee

 As explained above,  estimate \eqref{hm1}  implies Proposition \ref{prop:L2}.
\end{proof}

Combining \rlemma{lem:bound-eta} with \eqref{eq:L2-eta} we obtain the following.
\begin{corollary} 
\label{cor:eta}
We have
 $ \|\eta\|_{L^\infty(\Omega_\ve)}\leq C(|\log\ve|+1).$
\end{corollary}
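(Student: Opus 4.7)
The plan is simply to combine the two ingredients already proved in this subsection: Lemma~\ref{lem:bound-eta}, which gives the interpolation-type bound
\[
\|\eta\|_{L^\infty(\Omega_\varepsilon)} \leq C\left(|\log\varepsilon|^{1/2}\|\nabla\eta\|_{L^2(\Omega_\varepsilon)}+1\right),
\]
and the $H^1$-estimate \eqref{eq:L2-eta}, which reads $\|\nabla\eta\|_{L^2(\Omega_\varepsilon)}^2 \leq C(|\log\varepsilon|+1)$.

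First, I would extract a square root from \eqref{eq:L2-eta} to obtain $\|\nabla\eta\|_{L^2(\Omega_\varepsilon)} \leq C(|\log\varepsilon|+1)^{1/2}$. Substituting this directly into the bound from Lemma~\ref{lem:bound-eta} yields
\[
\|\eta\|_{L^\infty(\Omega_\varepsilon)} \leq C\left(|\log\varepsilon|^{1/2}(|\log\varepsilon|+1)^{1/2}+1\right) \leq C(|\log\varepsilon|+1),
\]
which is the desired estimate. Since the right-hand side bound above is $O(|\log\varepsilon|)$ as $\varepsilon\to 0$, this finishes the argument.

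There is essentially no obstacle at this final step: all the work has already been done in establishing the maximum-principle-based Lemma~\ref{lem:bound-eta} (which in turn relies on the good-ray selection of Lemma~\ref{lem:alpha} and the phase maximum principle of Proposition~\ref{prop:max}) and in deriving \eqref{eq:L2-eta} from the equation \eqref{eq:eqeta} together with the $L^1$-control of the forcing term $f_\varepsilon$. The corollary is a clean consequence that packages these two facts into a single $L^\infty$-statement for $\eta$, which will then be useful for the subsequent analysis of vortex locations in Section~\ref{sec:nonzerodeg}.
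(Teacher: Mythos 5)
Your argument is exactly the paper's: the corollary is stated there as an immediate consequence of combining Lemma~\ref{lem:bound-eta} with the $L^2$-bound \eqref{eq:L2-eta}, which is precisely what you do. The elementary estimate $|\log\varepsilon|^{1/2}(|\log\varepsilon|+1)^{1/2}\le |\log\varepsilon|+1$ closing the argument is correct.
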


\subsection{An $L^p$-bound for the gradient, $p\in[1,2)$}
\l{s5}
The main result of this section is
\begin{proposition}
\label{prop:W1p}
We have 
$\|\nabla u_\varepsilon\|_{L^p(\Omega)}\leq C_p$, $1\leq p<2.$ 
\end{proposition}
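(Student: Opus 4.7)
My plan is to split $\O = \O_\ve \cup \bigcup_{j=1}^k B_{\lambda\ve}(x_j^\ve)$ and treat the two pieces separately. On the bad discs, the pointwise bound $\|\na u_\ve\|_\infty \leq C/\ve$ from \rlemma{lem:grad}, combined with $|B_{\lambda\ve}| = O(\ve^2)$ and $k$ uniform in $\ve$, yields
\[
\int_{\bigcup_j B_{\lambda\ve}(x_j^\ve)} |\na u_\ve|^p \leq k(\pi\lambda^2\ve^2)(C/\ve)^p = O(\ve^{2-p}),
\]
which is $o(1)$ for $p<2$. On $\O_\ve$, the identity \eqref{ha2} gives $|\na u_\ve|^2 \leq C(|\na t_\ve|^2 + |\na\va_\ve|^2)$, reducing matters to uniform $L^p(\O_\ve)$-bounds on $\na t_\ve$ and $\na\va_\ve$. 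The first follows from H\"older and \rprop{prop:t}.

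For the second, I would write $\na\va_\ve = \na\eta_\ve + \na\Theta$ as a globally smooth vector field on $\O_\ve$, where the vortex piece $\na\Theta = \sum_j d_j (x-x_j^\ve)^\perp/|x-x_j^\ve|^2$ lies in $L^p(\O)$ uniformly in $\ve$ for $p<2$, by direct computation. Thus everything reduces to proving $\|\na\eta_\ve\|_{L^p(\O_\ve)} \leq C_p$.

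For this I would exploit the elliptic equation \eqref{eq:eqeta}, namely
\[
-\div(a\na\eta_\ve) = f + \div(a\na\Theta) \quad\text{in } \O_\ve,
\]
where, by \eqref{eq:114}, $\|f\|_{L^1(\O_\ve)} \leq C$, and the coefficient $a = 1 + O(\delta_2)$ is uniformly elliptic provided $\delta_2$ is chosen sufficiently small (which is permitted, since the definitive value of $\delta_2$ is to be fixed in this section). For such an equation, with $L^1 + \div(L^p)$ right-hand side and bounded boundary data, standard $L^p$ elliptic theory (obtained by duality against $W^{1,q}$-test functions with $q>2$, combined with Calder\'on-Zygmund estimates for the divergence part) would deliver a uniform $L^p$-bound for $\na\eta_\ve$.

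The main obstacle is precisely the control of the boundary data of $\eta_\ve$: while $\|\eta_\ve\|_{L^\infty(\p\O)} \leq C$ by \eqref{eq:107}, on the small circles $\p B_{\lambda\ve}(x_j^\ve)$ the function $\eta_\ve$ may be as large as $O(|\log\ve|)$ by \rcor{cor:eta}. Crucially, by \eqref{hl3}, its variation along each such circle is only $O(1)$. I would overcome this by subtracting near-constants: fix reference points $y_j^\ve \in \p B_{\lambda\ve}(x_j^\ve)$, set $c_j^\ve := \eta_\ve(y_j^\ve)$, and introduce smooth cutoffs $\chi_j$ with $\chi_j \equiv 1$ on $B_{2\lambda\ve}(x_j^\ve)$, supported in $B_{4\lambda\ve}(x_j^\ve)$, and $|\na\chi_j| \leq C/\ve$. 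The shifted function $\widetilde\eta_\ve := \eta_\ve - \sum_j c_j^\ve \chi_j$ has $O(1)$ boundary data on $\p\O_\ve$ and solves the same equation with an extra source $\div(\sum_j c_j^\ve a\,\na\chi_j)$ whose $L^p$-size is
\[
\sum_j |c_j^\ve|\,\|\na\chi_j\|_{L^p} \leq C|\log\ve|\cdot \ve^{(2-p)/p} = o(1).
\]
Extending $\widetilde\eta_\ve$ across each bad disc (e.g.\ as the harmonic extension of its $O(1)$ boundary values, whose own $L^p$-gradient is $O(\ve^{(2-p)/p}) = o(1)$) reduces the problem to $L^p$ elliptic theory on the fixed domain $\O$ with $L^1 + \div(L^p)$ right-hand side and bounded Dirichlet data, whence $\|\na\widetilde\eta_\ve\|_{L^p(\O)} \leq C_p$, and restriction to $\O_\ve$ concludes.
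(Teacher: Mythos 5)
Your overall architecture matches the paper's: decompose $\nabla u_\ve$ on $\Omega_\ve$ into $\nabla t_\ve + \nabla\varphi_\ve = \nabla t_\ve + \nabla\Theta + \nabla\eta_\ve$, control $\nabla t_\ve$ and $\nabla\Theta$ separately, and reduce to $\nabla\eta_\ve$ via the elliptic equation \eqref{eq:eqeta} with $L^1 + \div(L^p)$ right-hand side, taking $\delta_2$ small so that $\|a-1\|_\infty \le c_0\delta_2$ can be absorbed by the $L^p$ elliptic constant. This is precisely the paper's strategy. You also correctly identify the obstacle (on $\p B_{\lambda\ve}(x_j^\ve)$, $\eta_\ve$ may be as large as $O(|\log\ve|)$ but has $O(1)$ oscillation by \eqref{hl3}).

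Where you diverge, there is a genuine gap. You shift $\eta_\ve$ by near-constants $c_j^\ve\chi_j$ to normalize the boundary data to $O(1)$, and then propose to \emph{harmonically extend} $\widetilde\eta_\ve$ across each hole and apply ``$L^p$ elliptic theory on the fixed domain $\Omega$.'' But the extended function does not satisfy a clean elliptic PDE on $\Omega$: the equation it solves in $\Omega_\ve$ (with coefficient $a$ and source $f+\div(a\nabla\Theta)+\dots$) and the Laplace equation it solves inside each $B_{\lambda\ve}(x_j^\ve)$ do not glue smoothly, and one picks up a singular distribution supported on $\bigcup_j\p B_{\lambda\ve}(x_j^\ve)$ coming from the jump of $a\,\p_n\widetilde\eta_\ve$ (and of $a\nabla\Theta\cdot n$, which is of size $O(1/\ve)$ on a circle of circumference $O(\ve)$, hence not negligible) across those circles. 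You cannot cite standard elliptic regularity on $\Omega$ without accounting for these contributions. The paper avoids this issue completely by working with $\widetilde\eta := \xi^2\eta$, which \emph{vanishes identically} on a neighborhood of the bad discs; then $\widetilde\eta$ solves a single well-posed equation on all of $\Omega$ (with zero Dirichlet data after subtracting $\widetilde H=\xi^2 H$), the error terms $F_2 = -a\nabla(\xi^2)\cdot\nabla\varphi$ and $G_2 = -2a\eta\xi\nabla\xi$ are bounded in $L^1$ and $L^p$ respectively (the latter being $O(|\log\ve|\,\ve^{(2-p)/p}) = o(1)$, absorbing the $|\log\ve|$ factor from Corollary \ref{cor:eta}), and the elliptic estimates apply cleanly. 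Your near-constant subtraction is a harmless variant of what a multiplicative cutoff accomplishes, but on its own it does not produce a function satisfying a PDE on the fixed domain $\Omega$; you still need the multiplicative zeroing-out cutoff (or a careful distributional bookkeeping at $\bigcup_j\p B_{\lambda\ve}(x_j^\ve)$) to close the argument.
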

\begin{proof}
Fix any $p\in(1,2)$.
We will apply the bad discs construction from
Subsection~\ref{subsec:prelim} with a  $\delta_2=\delta_2(p)\le\delta_1$, that we  define below. 
%The choice of $\delta_2(p)$ is carried out as
%follows. 
By standard elliptic estimates, there exists a constant
$A_p=A_p(\Omega)$ such that the solution $w$ of the problem
\begin{equation}
  \label{eq:80}
\begin{cases}
  -\Delta w=\div {\mathbf g} &\text{ in }\Omega\\
    w=0 &\text{ on }\partial \Omega
\end{cases}
\end{equation}
 with ${\mathbf g}\in(L^p(\Omega))^2$ satisfies
 \begin{equation}
   \label{eq:81}
 \|\nabla w\|_{L^p(\Omega)}\leq A_p \|{\mathbf g}\|_{L^p(\Omega)}.
 \end{equation}
We require from $\delta_2(p)$ to satisfy
\begin{equation}
  \label{eq:82}
 0<\delta_2(p)\leq\min \left(\delta_1,\frac{1}{2c_0A_p}\right),
\end{equation}
 where $c_0$ is defined in \eqref{eq:1-0}. We choose $\delta_2=\delta_2(p)$ accordingly such that \eqref{eq:83} holds. In the sequel, $\Omega_\varepsilon$ denotes
 the set given in \eqref{eq:83} for this choice of  $\delta_2$. Note that the
 number of discs and the value of $\lambda$ may change with $\delta_2$,   but we shall use the same notation
 as before.

Let $H=H_\varepsilon$ denote the harmonic function in $\Omega$ satisfying $H=\eta_\varepsilon$ on
$\partial\Omega$. By \eqref{eq:107} and the maximum principle,
\begin{equation}
  \label{eq:100}
\|H_\varepsilon\|_{L^\infty(\Omega)}=\|\eta_\varepsilon\|_{L^\infty(\partial\Omega)}\leq C(g).
\end{equation}
Note that
\begin{equation*}
  \|\eta_\varepsilon\|_{W^{1-1/p,p}(\partial\Omega)}\leq C, 
\end{equation*}
 since
 \begin{equation*}
   \left\|\prod_{j=1}^k \left(\frac{z-x_j}{|z-x_j|}\right)^{d_j}\right\|_{W^{1,p}(\Omega)}\leq C,
 \end{equation*}
 see \eqref{eq:84}. Therefore, we also have
 \begin{equation}
   \label{eq:86}
\|H\|_{W^{1,p}(\Omega)}\leq C.
 \end{equation}

 Consider a function $\xi=\xi_\ve\in C^\infty(\overline\Omega)$ satisfying 
  \begin{equation}
  \l{hm2}
 0\leq\xi\leq 1,\    \xi\equiv 1\text{ on }\Omega\setminus\bigcup_{j=1}^k B_{2\lambda\ve}(x_j^\ve),\ 
\xi\equiv 0\text{ on }\bigcup_{j=1}^k B_{3\lambda\ve/2}(x_j^\ve),\ \|\nabla\xi\|_\infty\leq\frac{4}{\lambda\ve}.
  \end{equation}
  
Note that, by \eqref{hj3}, for small $\ve$ the discs  $\{B_{2\lambda\ve}(x_j)\}$ do not
intersect the boundary, and thus $ \xi=1$ on $\partial\Omega$. From the
properties of $\xi$ we obtain, in particular, that
\begin{equation}
\label{eq:99}
  \|\nabla\xi\|_p=\|\nabla\xi\|_{L^p(\bigcup_{j=1}^k
    B_{2\lambda\ve}(x_j^\ve))}\leq C\, \varepsilon^{2/p-1}=o(1).
\end{equation}

In $\Omega$, we set $\widetilde\eta:=\widetilde\eta_\varepsilon=\xi^2\eta$ and
$\widetilde H=\widetilde H_\varepsilon:=\xi^2H$. From
\eqref{eq:100}--\eqref{eq:99} we conclude that
\begin{equation}
  \label{eq:111}
  \|\widetilde H\|_{W^{1,p}(\Omega)}\leq C.
\end{equation}
Note that, although $\eta$ is defined only in $\O_\ve$, the function
$\widetilde\eta$ is globally defined (and smooth), since $\eta=0$ on a 
neighborhood of $\overline B_{\lambda\ve}(x_j^\ve)$. 

The function $\widetilde \eta$ satisfies
\bes
\begin{aligned}
  -\div(a\nabla \widetilde\eta)\,=&-\div(a\xi^2\nabla\eta)-\div(a\eta\nabla(\xi^2))\\
 \,=&\underbrace{-\xi^2\div(a\nabla\va)}_{F_1}\underbrace{-a\nabla(\xi^2)\cdot\nabla\va}_{F_2}+\div(\underbrace{a\xi^2\nabla\Theta}_{G_1})+\div(\underbrace{-2 a\eta\xi\nabla\xi}_{G_2})
 %\\ \phantom{:}
:=&F_1+F_2+\div G_1+\div G_2.
\end{aligned}
\ees

Therefore, 
\begin{equation}
  \label{eq:85}
\begin{cases}
-\Delta (\widetilde\eta-\widetilde
H)=F_1+F_2+\div G_1+\div G_2 +\div(a\nabla \widetilde H)+\div((a-1)\nabla (\widetilde\eta-\widetilde
                      H))&\text{in }\Omega\\              
%                      \\
%                      &\phantom{=}+\div((a-1)\nabla (\widetilde\eta-\widetilde
%                      H))\ \text{in }\Omega\\
  \widetilde\eta-\widetilde H=0&\text{on }\partial\Omega
\end{cases}.
\end{equation}
By elliptic estimates, there exists $B_p=B_p(\Omega)>0$ such that the solution $w$ of the problem
\begin{equation}
  \label{eq:88}
\begin{cases}
  -\Delta w=v &\text{ in }\Omega\\
    w=0 &\text{ on }\partial \Omega
\end{cases}
\end{equation}
 with $v\in L^1(\Omega)$, satisfies
 \begin{equation}
   \label{eq:89}
 \|\nabla w\|_p\leq B_p \|v\|_1.
 \end{equation}

Note that $F_1=\xi^2f$  is bounded in
$L^1(\Omega)$; here, $f$ is defined in \eqref{eq:f}. The same holds for $F_2$ since, by \eqref{eq:grad},
\begin{equation*}
  \int_\Omega |a\nabla(\xi^2)\cdot\nabla\varphi|=\int_{\bigcup_{j=1}^k
    B_{2\lambda\ve}(x_j^\ve)\setminus
    B_{3\lambda\ve/2}(x_j^\ve)}|a\nabla(\xi^2)\cdot\nabla\varphi|\leq C_1\varepsilon^2\|\nabla\xi\|_\infty\|\nabla u\|_\infty\leq C_2.
\end{equation*}

Using the inequality
\bes
|\nabla\Theta(x)|\leq\frac{C}{r},\text{ with }r=r(x):=\dist(x,\{x_1^\ve,\ldots, x_k^\ve\}),
\ees
we find that $G_1$ is bounded in $L^p(\Omega)$. 
Similarly, $G_2$ is bounded in $L^p(\O)$, since 
%In fact, using \rcor{cor:eta} we obtain
\bes
\int_\O|G_2|^p\leq C_3\|\eta\|^p_\infty\left(\frac{1}{\ve}\right)^p\ve^2\leq C_4\ve^{2-p} |\log\ve|^p=o(1),
\ees

 by Corollary \ref{cor:eta} and \eqref{hm2}. Finally, $a\nabla \widetilde H$ is bounded in $L^p(\O)$ by \eqref{eq:111}.

We also note that
\begin{equation*}
  |t(x)|\leq\delta_2(p)~\text{ on }~\supp(\nabla(\widetilde\eta-\widetilde H))\subset\Omega_\varepsilon.
\end{equation*}
 Using the above in \eqref{eq:85} we get by
 \eqref{eq:81} and \eqref{eq:89} that
 \begin{align}
   \label{eq:87}
\begin{split}
 \|\nabla(\widetilde\eta-\widetilde H)\|_{L^p}&\leq A_p\left(\|(a-1)\nabla
 (\widetilde\eta-\widetilde H)\|_{L^p}+
 \|G_1\|_{L^p}+\|G_2\|_{L^p}+\|a\nabla \widetilde H\|_{L^p}\right)\\
&\phantom{=}+B_p \left(\|F_1\|_{L^1}+\|F_2\|_{L^1}\right)
%\\&
\leq
A_pc_0\delta_2(p)\|\nabla(\widetilde\eta-\widetilde H)\|_{L^p}+C.
\end{split}
 \end{align}
 
 Combining \eqref{eq:82} and \eqref{eq:87}, we find that $ \|\nabla(\widetilde\eta
 -\widetilde H)\|_{L^p}\leq
 C$, which in conjunction with \eqref{eq:111} implies that
 $\|\nabla\widetilde\eta \|_{L^p}\leq C$. Since
 $\|\nabla\Theta\|_{L^p(\Omega_\varepsilon)}\leq C$, we obtain that
 \begin{equation}
 \l{hm4}
   \|\nabla u_\varepsilon\|_{L^p(\Omega\setminus\cup_{j=1}^k  B_{2\lambda\ve}(x_j^\ve))}\leq C.
 \end{equation}
 
 The conclusion of Proposition \ref{prop:W1p}   follows from \eqref{hm4} and the fact that, by \eqref{eq:grad},   $\{\nabla u_\ve\}$ is bounded in $L^p(B_{2\lambda\ve}(x_j^\ve))$.
\end{proof}

\subsection{A bound for the energy away from the singularities}

 We denote by $a_1,\ldots,a_N\in\overline{\O}$ the different limits of the families
 $\{x_j^\ve\}$, $j=1,\ldots, k$ (possibly along a subsequence). Since two different families can converge to the same limit, we have $N\le k$. 
 At this point we do not
 exclude the possibility that some of the $a_i$'s belong to $\partial\Omega$. Consider some $r>0$ such that
 \be
 \l{hm6}
 r<\min\{|a_i-a_j|;\, i\neq j\}\text{ and }r<\dist (a_j,\p\O),\ \fo j\text{ such that }a_j\in\O.
 \ee
  
  We denote
 \begin{equation*}
\widetilde\O_r:=\O\setminus \bigcup_{j=1}^N \overline {B_r(a_j)},
\end{equation*}
and by $D_j$ the  degree of $u_\ve$ on $\partial (B_\rho(a_j)\cap \O)$
for small $\ve$ and (small but fixed) $\rho$. The following equality
is clear: if $J_j:=\{ \ell;\, x_\ell^\ve\to a_j\}$, then
$D_j=\sum_{\ell\in J_j}d_\ell$. 
\begin{theorem}
\label{th:bound}
 For each $r$ as in \eqref{hm6} we have
 \begin{equation}
\label{eq:enrgy-bd}
E_\ve(u_\ve;\widetilde\O_r)\leq C(r).
\end{equation}
\end{theorem}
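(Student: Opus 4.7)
The plan is to reduce the desired energy bound to a uniform $L^\infty$ bound on a globally defined phase, and to close via a Caccioppoli-type estimate. By \rlemma{lem:pohozaev} the potential energy on $\widetilde\Omega_r$ is already $O(1)$, and by \rprop{prop:t} we have $\int_{\widetilde\Omega_r}|\nabla t_\ve|^2\leq C$, since $\widetilde\Omega_r\subset\Omega_\ve$ once $\ve$ is small enough (the bad discs cluster near $a_1,\ldots,a_N$, so for small $\ve$ they are disjoint from $\widetilde\Omega_r$). Using $|\nabla u_\ve|^2=a|\nabla\varphi_\ve|^2+|\nabla t_\ve|^2$ from \eqref{ha2}, it thus suffices to prove
\begin{equation*}
\int_{\widetilde\Omega_r}|\nabla\varphi_\ve|^2\leq C(r),
\end{equation*}
where $\varphi_\ve$ is the local phase of $u_\ve$.

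In the spirit of Subsection~\ref{subsec:log}, introduce a globally single-valued function $\widetilde\eta_\ve$ on $\widetilde\Omega_{r/2}$ by
\begin{equation*}
\tau^{-1}(\Pi u_\ve)=e^{\im \widetilde\eta_\ve}\prod_{j=1}^{N}\left(\frac{z-a_j}{|z-a_j|}\right)^{D_j},
\end{equation*}
so that $\widetilde\varphi_\ve=\widetilde\eta_\ve+\widetilde\Theta$ is the local phase, with $\widetilde\Theta:=\sum_j D_j\theta_j$ (polar angle around $a_j$) bounded and $|\nabla\widetilde\Theta|\leq C(r)$ on $\widetilde\Omega_{r/2}$. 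Then $\widetilde\eta_\ve$ solves the analogue of \eqref{eq:eqeta} with source $f_\ve$ uniformly bounded in $L^1$ (cf.~\eqref{eq:114} and the calculation~\eqref{eq:om}). The crucial step is the $\ve$-uniform bound $\|\widetilde\eta_\ve\|_{L^\infty(\widetilde\Omega_\rho)}\leq C(r)$ for a generic $\rho\in(r/2,r)$ chosen below. To get it, fix $p\in(1,2)$ and apply Fubini to the $W^{1,p}$-bound of \rprop{prop:W1p}: this produces $\rho$ and angles $\alpha_j\in[0,2\pi)$ such that both $\int_{\partial B_\rho(a_j)\cap\Omega}|\nabla u_\ve|^p\,ds$ and $\int_{R_j}|\nabla u_\ve|^p\,ds$ are bounded uniformly in $\ve$, where $R_j$ is the radial segment from $\partial B_\rho(a_j)$ to $\partial\Omega$ at angle $\alpha_j$ (the separation condition \eqref{hm6} allows choosing the $R_j$ disjoint from $B_\rho(a_\ell)$ for $\ell\neq j$). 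By H\"older's inequality and the boundedness of the ambient curve lengths, the corresponding line integrals of $|\nabla u_\ve|$ are uniformly bounded. Combined with the proximity of $u_\ve$ to $\Gamma$ on $\widetilde\Omega_\rho$, this controls the oscillation of a continuous lift of $\widetilde\varphi_\ve$ along $R_j$ and along $\partial B_\rho(a_j)\cap\Omega$. Together with the smooth, bounded trace of $\widetilde\varphi_\ve$ on $\partial\Omega\cap\widetilde\Omega_\rho$ derived from $\varphi_0$, this yields $\|\widetilde\varphi_\ve\|_{L^\infty(\partial\omega_\ve)}\leq C(r)$, where $\omega_\ve:=\widetilde\Omega_\rho\setminus\bigcup_j R_j$ is simply connected. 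Applying \rprop{prop:max} on $\omega_\ve$ (via the $\delta\to 0$ argument of the proof of \rlemma{lem:bound-eta}) gives $\|\widetilde\varphi_\ve\|_{L^\infty(\omega_\ve)}\leq C(r)$, whence $\|\widetilde\eta_\ve\|_{L^\infty(\widetilde\Omega_\rho)}\leq C(r)$ from boundedness of $\widetilde\Theta$.

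To close, write $\widetilde\eta_\ve=\widehat\eta_\ve+H$, where $H$ is a fixed ($\ve$-independent) smooth bounded extension to $\widetilde\Omega_\rho$ of the trace $\varphi_0-\sum_j D_j\theta_j$ on $\partial\Omega\cap\widetilde\Omega_\rho$, so that $\widehat\eta_\ve=0$ on $\partial\Omega\cap\widetilde\Omega_\rho$. Test the resulting equation for $\widehat\eta_\ve$ against $\chi^2\widehat\eta_\ve$, where $\chi$ is a smooth cutoff with $\chi\equiv 1$ on $\widetilde\Omega_r$ (including its part of $\partial\Omega$) and $\chi\equiv 0$ on $\bigcup_j \overline{B_{(r+\rho)/2}(a_j)}$. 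The boundary terms vanish (on $\partial\Omega$ because $\widehat\eta_\ve=0$, on $\partial B_\rho(a_j)$ because $\chi=0$), and combining the $L^1$ bound on $f_\ve$, the uniform $L^\infty$ bound on $\widehat\eta_\ve$, and the uniform bounds on $\nabla\widetilde\Theta$, $\nabla H$, and $\nabla\chi$ with Young's inequality yields $\int_{\widetilde\Omega_r}|\nabla\widetilde\eta_\ve|^2\leq C(r)$. Since $\int_{\widetilde\Omega_r}|\nabla\widetilde\Theta|^2\leq C(r)$, we obtain $\int_{\widetilde\Omega_r}|\nabla\widetilde\varphi_\ve|^2\leq C(r)$, and the theorem follows from the initial reduction.

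The hardest step is the $\ve$-uniform $L^\infty$ bound on $\widetilde\eta_\ve$. In \rlemma{lem:alpha}, the available $H^1$ control $\|\nabla\eta\|_{L^2}\leq C|\log\ve|^{1/2}$ produced good rays, but with a $|\log\ve|^{1/2}$-loss; here the strictly sub-$H^1$ information of \rprop{prop:W1p}, combined with H\"older and the boundedness of curve lengths, is precisely what lets us replace $|\log\ve|^{1/2}$ by $O(1)$ on generic slices. The geometric separation \eqref{hm6} of the $a_j$ is used to arrange the rays and circles properly.
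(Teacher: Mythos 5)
Your proposal is correct in its essentials and follows the same broad strategy as the paper's proof: reduce to an $L^\infty$ bound on a globally defined lift of the phase on $\widetilde\Omega_\rho$, obtain that bound by choosing generic circles and rays via Fubini from the $W^{1,p}$-control of \rprop{prop:W1p} (the crucial point, which you identify explicitly, being that the $\ve$-independent $L^p$ bound with $p<2$ gives an $O(1)$ estimate on slices rather than the $O(|\log\ve|^{1/2})$ loss of \rlemma{lem:alpha}), and then close with an $H^1$ energy estimate. The paper's proof does not use a cutoff: it multiplies \eqref{eq:eqeta} by $\eta$ and integrates over all of $\widetilde\Omega_{\widetilde r}$, and then controls the resulting boundary integral $I_3$ on $\partial B_{\widetilde r}(a_j)\cap\Omega$ directly via the good-circle estimate \eqref{eq:s} together with the $L^\infty$ bound \eqref{eq:eta-tilde}; you instead test against $\chi^2\widehat\eta_\ve$ and make the inner boundary terms vanish. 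Both work; the paper reuses the good-circle estimate a second time whereas you pay for it by having to control the extra $\chi$-commutator terms and by requiring $\rho$ bounded away from $r$ (say $\rho\in(r/2,3r/4)$) so that $\|\nabla\chi\|_\infty=O(1/r)$ uniformly in $\ve$ — a small point you should make explicit. The other difference is cosmetic: the paper keeps the lift $\eta$ defined via the moving points $x_j^\ve$ (equation \eqref{eq:84}), so that its trace on $\partial\Omega$ is handled once and for all by \eqref{eq:fix}--\eqref{eq:107}, while you introduce a new lift using the limit points $a_j$. These two differ on $\widetilde\Omega_{r/2}$ by the argument of $\prod_{\ell}\left(\frac{z-x_\ell^\ve}{|z-x_\ell^\ve|}\right)^{d_\ell}\big/\prod_j\left(\frac{z-a_j}{|z-a_j|}\right)^{D_j}$, which is smooth with $O(1)$ gradient there; this is fine, but it does require a sentence fixing the additive normalization of $\widetilde\eta_\ve$ so that its trace on $\partial\Omega\cap\widetilde\Omega_\rho$ is $\ve$-independent (or, if that trace set is disconnected because some $a_j\in\partial\Omega$, at least uniformly bounded, which suffices once you also invoke \eqref{eq:Pohozaev} to control the corresponding $\partial\Omega$-boundary term). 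With these small clarifications your argument is a valid alternative to the one in the paper.
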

\begin{proof}
 By the boundedness of  $\{\nabla\eta\}$ in $L^1(\O_\ve)$ (see \rprop{prop:W1p}), it follows that there exists $\widetilde r=\widetilde r(\ve)\in (r/2,r)$ such that
 \begin{equation}
\label{eq:s}
\sum_{j=1}^N \int_{\partial B_{\widetilde r}(a_j)\cap\O} |\nabla\eta|\,d\sigma\leq C_1(r).
\end{equation}
Similarly,  we can  find for each  $j\in\{1,\ldots,N\}$ a
number $\beta_j\in[0,2\pi)$ such that the set
\bes
\widetilde R_j=\widetilde R_j(\beta_j):=\{a_j+s\, e^{\im\beta_j};\, s\ge \widetilde r\}\cap \widetilde \O_{\widetilde r}
\ees
satisfies
\begin{equation}
\label{eq:tilde}
\int_{\widetilde R_j} \left|\frac{\partial\eta}{\partial s}\right|\,ds\leq C_2(r).
\end{equation}
Repeating the proof of Lemma \ref{lem:bound-eta} and using  \eqref{eq:s} and \eqref{eq:tilde}, we find that 
% Since the ray $\{a_j+se^{\im \beta_j};\,s\in(\widetilde r,\infty)\}$ may intersect
% another ray, or a disc $B_{\widetilde r}(a_i)$ for some $i\neq j$, $\widetilde
% R_j$ consists of a finite number of segments. Denoting $\omega_{\widetilde r}=\O_{\widetilde r} \setminus \bigcup_{j=1}^N \overline{\widetilde R_j}$,
%  we deduce from \eqref{eq:s}--\eqref{eq:tilde} and \eqref{eq:fix} that
%  \begin{equation}
%\label{eq:eta-infty}
%\|\eta\|_{L^\infty(\partial\omega_{\widetilde r})}\leq C_3(r).
%\end{equation}
% Next, analogously to \eqref{eq:Theta}, we define in $\omega_{\widetilde r}$,
%\begin{equation*}
%\widetilde\Theta=\sum_{j=1}^N d_j\widetilde\theta_j,
%\end{equation*}
%where  $\widetilde\theta_j$ is a polar coordinate around the point $a_j$, taking values in $(\beta_j,\beta_j+2\pi)$. The function $\widetilde\va=\eta+\widetilde\Theta$ is well defined in 
%$\omega_{\widetilde r}$, and thanks to \eqref{eq:eta-infty} satisfies
% \begin{equation*}
%\|\widetilde\va\|_{L^\infty(\partial\omega_{\widetilde r})}\leq C_4(r),
%\end{equation*}
% where the values of $\widetilde\varphi$ on each $\widetilde R_j$ should be
% interpreted in the sense of one-sided limits. Applying the maximum
% principle of \rprop{prop:max} to $\widetilde\va$ in $\omega_{\widetilde r}$ (using
% an approximation of $\omega_{\widetilde r}$ by smooth domains, like in the
% proof of \rlemma{lem:bound-eta}) yields 
% \begin{equation*}
%\|\widetilde\va\|_{L^\infty(\omega_{\widetilde r})}\leq C_5(r).
%\end{equation*}
%Therefore, also
\begin{equation}
\label{eq:eta-tilde}
\|\eta\|_{L^\infty(\widetilde \Omega_{\widetilde r})}\leq C_3(r).
\end{equation}
For $\varepsilon$ sufficiently small we have
\begin{equation}
  \label{eq:115}
  |x_\ell^\ve- a_j|<\widetilde r/2,~\forall\ell\in J_j,\,j=1,\ldots,N.
\end{equation}
Next, we multiply  the equation \eqref{eq:eqeta} satisfied by $\eta$ and integrate over
$\widetilde\Omega_{\widetilde r}$. 
This yields as in \eqref{eq:int-eta}
 \be
 \label{eq:ontilde}
\int_{\widetilde\Omega_{\widetilde r}}
a|\nabla\eta|^2=\int_{\widetilde\Omega_{\widetilde r}}f\eta-\int_{\widetilde\Omega_{\widetilde r}}a\nabla\Theta\cdot\nabla\eta
+\int_{\p\widetilde\O_{\widetilde r}}a\frac{\partial\va}{\partial n}\eta
:=I_1+I_2+I_3.
\ee

%\leq & C_4(r)+\int_{\widetilde\Omega_{\widetilde r}}\frac{a}{2}|\nabla\eta|^2+\int_{\widetilde\Omega_{\widetilde r}}\frac{a}{2}|\nabla\widetilde\Theta|^2.
By \eqref{eq:114} and \eqref{eq:eta-tilde} we have $|I_1|\leq
C_4(r)$. We claim that also $|I_3|\leq C_5(r)$. Indeed, we use \eqref{eq:Pohozaev} and
\eqref{eq:eta-tilde} for the integral on $\partial
\widetilde\Omega_{\widetilde r}\cap\partial\Omega$ and for the
integral on $\partial B_{\widetilde r}(a_j)\cap\Omega$ we use \eqref{eq:s} and
the fact that thanks to \eqref{eq:115} we have
$$\left|\frac{\partial\Theta}{\partial n}\right|\leq
\frac{C}{\widetilde r}~\text{ on }\partial B_{\widetilde r}(a_j).
$$
Applying the Cauchy-Schwarz inequality to $I_2$ and the above estimates in \eqref{eq:ontilde} leads to
\begin{equation}
\label{eq:116}
  \int_{\widetilde\Omega_{\widetilde r}}
a|\nabla\eta|^2\leq C_6(r)+\int_{\widetilde\Omega_{\widetilde r}}\frac{a}{2}|\nabla\eta|^2+\int_{\widetilde\Omega_{\widetilde r}}\frac{a}{2}|\nabla\Theta|^2\,.
\end{equation}

Since $\int_{\widetilde\Omega_{\widetilde r}}(a/2)\,
|\nabla\Theta|^2\leq C_7(r)(|\log r|+1)$, we get from \eqref{eq:116}
that $\int_{\widetilde\Omega_{\widetilde r}}|\nabla\eta|^2\leq
C_8(r)$. It follows that also 
$\int_{\widetilde \Omega_{\widetilde r}}|\nabla\va|^2\leq C_9(r)$, which clearly implies \eqref{eq:enrgy-bd}.
\end{proof}
\subsection{Convergence of $\{u_{\varepsilon_n}\}$}
\label{subsec:convergence}
The bound of \rprop{prop:W1p} implies  that for a subsequence $\{
u_{\ve_n}\}$ we have 
\begin{equation}
  \label{eq:92}
  u_{\ve_n}\rightharpoonup u_*\text{ weakly in }W^{1,p}(\Omega),\ \fo p\in[1,2),
\end{equation}
 for some $u_*\in\displaystyle\bigcap_{p\in[1,2)}W^{1,p}(\Omega ;\Gamma)$.  
 The fact that $u_*$ is $\Gamma$-valued follows from \eqref{eq:92} and the
 estimate \eqref{eq:Pohozaev} that implies the convergence $t_{\varepsilon_n}\to0$
 in $L^2(\Omega)$.

 We can now further state
 \begin{proposition}
  \label{th:conv-comp}
 We have
 \begin{equation}
   \label{eq:93}
   u_{\ve_n}\to u_*\text{ in
   }C^{1,\alpha}(\overline{\Omega}\setminus\{a_1,\ldots, a_N\}),\ \fo\alpha\in(0,1).
\end{equation}
 The limit $u_*$ is a $\Gamma$-valued harmonic map in $\Omega\setminus\{a_1,\ldots, a_N\}$.  
 \end{proposition}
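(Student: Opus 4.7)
The plan is to localise: near every point of $\overline\Omega\setminus\{a_1,\ldots,a_N\}$, apply Theorem~\ref{th:conv-geps} (interior case) or Theorem~\ref{th:conv-bdry} (boundary case) on a small ball to which the solutions $u_{\varepsilon_n}$ look vortex-less, and then use the weak $W^{1,p}$ convergence \eqref{eq:92} to identify the pointwise limit with $u_*$.

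First I would fix an arbitrary compact $K\subset\overline\Omega\setminus\{a_1,\ldots,a_N\}$ and choose $r>0$ so small that $K\subset\widetilde\Omega_{2r}$ (with $r$ as in \eqref{hm6}). By the very definition of the $a_j$'s, for $n$ sufficiently large every bad disc centre $x_\ell^{\varepsilon_n}$ lies in $\bigcup_{j=1}^N B_{r/2}(a_j)$. Hence, by the defining property \eqref{eq:83} of the bad discs, $\dist(u_{\varepsilon_n}(x),\Gamma)\leq\delta_2\leq\delta_1$ for every $x\in\widetilde\Omega_r$. In particular, the no-vortex condition \eqref{eq:34} holds for $u_{\varepsilon_n}$ on any subdomain contained in $\widetilde\Omega_r\cap\Omega$.

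Next, given $x_0\in K$, I distinguish two cases. If $x_0\in\Omega$, I fix $R>0$ with $\overline{B_R(x_0)}\subset\Omega\cap\widetilde\Omega_r$. By Theorem~\ref{th:bound}, $E_{\varepsilon_n}(u_{\varepsilon_n};\widetilde\Omega_r)\leq C(r)$ uniformly in $n$, so Fubini's theorem yields radii $R_n'\in (R/2,R)$ with
\begin{equation*}
\int_{\partial B_{R_n'}(x_0)}\Bigl[\tfrac12|\nabla u_{\varepsilon_n}|^2+\frac{W(u_{\varepsilon_n})}{\varepsilon_n^2}\Bigr]\,d\sigma\leq \frac{C(r)}{R},
\end{equation*}
from which \eqref{eq:42}--\eqref{eq:43} follow for the boundary data $g_{\varepsilon_n}:=u_{\varepsilon_n}|_{\partial B_{R_n'}(x_0)}$ (note that $\deg g_{\varepsilon_n}=0$ by \eqref{hb3}, since the no-vortex condition \eqref{eq:34} holds in $B_{R_n'}(x_0)$). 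Extracting a subsequence so that $R_n'\to R^*\in[R/2,R]$, I may apply Theorem~\ref{th:conv-geps} and obtain $C^{1,\alpha}$ convergence of $u_{\varepsilon_n}$ on every compact subset of $B_{R/2}(x_0)$ towards some $\Gamma$-valued harmonic map $\tau(e^{i\zeta_{x_0}})$ with $\zeta_{x_0}$ harmonic. If instead $x_0\in\partial\Omega$, I pick $R>0$ so that $\overline{B_R(x_0)}\cap\Omega\subset\widetilde\Omega_r$ and $\partial B_R(x_0)\cap\partial\Omega$ consists of exactly two points; the same Fubini argument, applied to the arc $\partial B_{R_n'}(x_0)\cap\Omega$, produces conditions \eqref{eq:cond2}--\eqref{eq:cond3}, while \eqref{eq:cond1} holds automatically since $u_{\varepsilon_n}=g$ on $\partial\Omega$, and Theorem~\ref{th:conv-bdry} delivers $C^{1,\alpha}$ convergence on compact subsets of $\overline\Omega\cap B_{R/2}(x_0)$.

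Finally, covering $K$ by finitely many such balls and extracting a common diagonal subsequence, I obtain $u_{\varepsilon_n}\to v$ in $C^{1,\alpha}(K)$ for some $\Gamma$-valued $v$. The weak convergence \eqref{eq:92} forces $v=u_*|_K$; since $K$ is arbitrary, this gives \eqref{eq:93}. On each ball, the limit is of the form $\tau(e^{i\zeta_{x_0}})$ with $\zeta_{x_0}$ harmonic, hence a harmonic map into $\Gamma$; as harmonicity is local, $u_*$ is a $\Gamma$-valued harmonic map on $\Omega\setminus\{a_1,\ldots,a_N\}$. The main technical point I expect to have to be careful about is the boundary case: one must verify that the hypotheses of Theorem~\ref{th:conv-bdry} hold along the slicing arc, in particular that the pieces of $g_{\varepsilon_n}$ along $\partial\Omega\cap B_R(x_0)$ are literally equal to the fixed smooth datum $g$, so that the only $\varepsilon$-dependence sits on the inner arc where the Fubini-type estimate is available.
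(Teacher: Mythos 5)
Your proof follows essentially the same localization strategy as the paper's: use \rth{th:bound} for local energy bounds away from the $a_j$'s, slice with Fubini to control boundary data, invoke \rth{th:conv-geps} and \rth{th:conv-bdry}, and identify the limit with $u_*$ via the weak $W^{1,p}$ convergence \eqref{eq:92}. Your explicit verification that the no-vortex hypothesis \eqref{eq:34} holds on the small balls (because the bad disc centres accumulate only at the $a_j$'s) is a worthwhile detail that the paper leaves implicit.

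One point to tighten: \rth{th:conv-geps} is stated for a \emph{fixed} domain $\Omega$ with data $g_\varepsilon$ on the fixed boundary $\partial\Omega$, so it does not literally apply when the slicing radius $R_n'$ varies with $n$. Extracting a subsequence with $R_n'\to R^*$ does not by itself repair this: the domain $B_{R_n'}(x_0)$ still depends on $n$, the harmonic extension $\zeta_{\varepsilon_n}$ in the proof of \rth{th:conv-geps} lives on a moving domain, and in the degenerate case $R^*=R/2$ the ball $B_{R/2}(x_0)$ would not even be compactly contained. The cleaner route, which is what the paper does, is to choose a \emph{single} radius $R'$ for which the boundary energy bound holds along a subsequence of $\varepsilon_n$ (for instance by applying Fatou's lemma to the indicators of the sets of good radii, whose measure is bounded below uniformly in $n$). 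Then \rth{th:conv-geps} (respectively \rth{th:conv-bdry}) applies verbatim on the fixed ball $B_{R'}(x_0)$ (respectively $\Omega\cap B_{R'}(x_0)$), and the rest of your argument goes through unchanged.
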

\begin{proof}
We  argue as in \cite[Proof of Theorem~VI.1]{bbh94}. 
 For notational simplicity, we drop in what follows the subscript $n$.
It suffices to show that for every $x_0\in\overline{\Omega}\setminus\{a_1,\ldots, a_N\}$ there
exists $R>0$ such that $\ue\to u_*$ in $C^{1,\alpha}(\overline{\Omega}\cap B_R(x_0))$.
Consider first the case $x_0\in\Omega$. We choose $R>0$ such that
$B_{2R}(x_0)\Subset {\Omega}\setminus\{a_1,\ldots, a_N\}$. Since, by \eqref{eq:enrgy-bd},
\begin{equation*}
  E_{\varepsilon}(\ue;B_{2R}(x_0))\leq C,
\end{equation*}
 we can use Fubini's theorem to find $R'\in(R,2R)$ such that (after
 passing to a further subsequence),
 \begin{equation*}
   \int_{\partial B_{R'}(x_0)}\left[\frac{1}{2}|\nabla\ue|^2+\frac{W(\ue)}{\varepsilon^2}\right]\leq C.
 \end{equation*}
Then, applying \rth{th:conv-geps} we obtain that, up to a further subsequence, $\ue\to u_0$ in
$C^{1,\alpha}(\overline B_R(x_0))$, and that $u_0$ is a harmonic map in $B_R(x_0)$
(since it can be written as $u_0=\tau(e^{\im \zeta})$ where $\zeta$ is a harmonic
function in $B_R(x_0)$). Using the uniqueness of the limit, we
find that $u_0=u_*$, and that the original subsequence $\{ u_{\ve_n}\}$ converges to $u_*$ in $C^{1,\alpha}(\overline B_R(x_0))$.
 
It remains to consider the case $x_0\in\partial\Omega\setminus\{a_1,\ldots, a_N\}$ (at this stage we do
not exclude the possibility that   some of the $a_j$'s belong to
$\partial\Omega$). We choose a small $R>0$ such $\partial B_R(x_0)\cap\partial\Omega$ consists of
exactly two points and  
\begin{equation*}
  R<\min_{1\leq j\leq N} |x_0-a_j|.
\end{equation*}
Again by  \eqref{eq:enrgy-bd}, we have
\begin{equation*}
  E_{\varepsilon}(\ue;\Omega\cap B_{2R}(x_0))\leq C,
\end{equation*}
 and by Fubini's theorem there exists $R'\in(R,2R)$ such that (after
 passing to a further subsequence),
 \begin{equation*}
   \int_{\partial B_{R'}(x_0)\cap\Omega}\left[\frac{1}{2}|\nabla\ue|^2+\frac{W(\ue)}{\varepsilon^2}\right]\leq C.
 \end{equation*}
Applying \rth{th:conv-bdry} we obtain that $\ue\to u_*$ in
$C^{1,\alpha}(B_R(x_0)\cap\overline{\Omega})$.
\end{proof}
Next we deduce further properties of the map $u_*$ that will enable us
to conclude the proof of \rth{th:main}.
\begin{proposition}
  We have $\{a_1,\ldots, a_N\}\subset\Omega$.
\end{proposition}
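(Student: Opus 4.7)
The plan is a blow-up contradiction at scale $\rho_\varepsilon := \dist(x_\ell^\varepsilon,\partial\Omega)$. Supposing, towards a contradiction, that some $a_{j_0}\in\partial\Omega$, I would pick $\ell\in J_{j_0}$ with $x_\ell^\varepsilon\to a_{j_0}$; by \rlemma{hj2}, $\rho_\varepsilon\to 0$ and $\rho_\varepsilon/\varepsilon\to\infty$. Letting $y^\varepsilon$ be the projection of $x_\ell^\varepsilon$ on $\partial\Omega$ and ${\cal R}^\varepsilon$ a rotation carrying $n(y^\varepsilon)$ to $(0,1)$, I would define the rescaled solution $v_\varepsilon(y) := u_\varepsilon(y^\varepsilon+\rho_\varepsilon\,{\cal R}^\varepsilon y)$ on $\Omega'_\varepsilon := \rho_\varepsilon^{-1}({\cal R}^\varepsilon)^{-1}(\Omega-y^\varepsilon)$. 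Then $v_\varepsilon$ solves $\Delta v_\varepsilon = \tilde\varepsilon_\varepsilon^{-2}\nabla W(v_\varepsilon)$ with $\tilde\varepsilon_\varepsilon := \varepsilon/\rho_\varepsilon\to 0$ on a domain $\Omega'_\varepsilon$ that converges locally to the half-plane $H := \{y_2>0\}$, while the rescaled bad-disc centre sits at the fixed point $e := (0,1)\in H$.

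Two boundary-type facts would survive at the limit. First, $v_\varepsilon|_{\partial\Omega'_\varepsilon} = g(y^\varepsilon+\rho_\varepsilon\,{\cal R}^\varepsilon\cdot)$ converges uniformly on compacts of $\partial H$ to the constant $\gamma_0 := g(a_{j_0})\in\Gamma$. Second, a change of variables in the Pohozaev bound \eqref{eq:Pohozaev} turns it into $\int_{\partial\Omega'_\varepsilon\cap K}|\partial_n v_\varepsilon|^2 \le C\rho_\varepsilon\to 0$ for every compact $K\subset\overline H$. On any compact $K\subset\overline H\setminus\{e\}$ and for small $\varepsilon$, the rescaled solution avoids every bad disc, so \eqref{eq:34} holds for $v_\varepsilon$ on $K$; I would then apply rescaled versions of \rth{th:conv-geps} and \rth{th:conv-bdry} (with the $\varepsilon$-dependent boundary datum $g_\varepsilon := g(y^\varepsilon+\rho_\varepsilon\,{\cal R}^\varepsilon\cdot)$) to extract a subsequence with $v_\varepsilon\to v$ in $C^{1,\alpha}_{\mathrm{loc}}(\overline H\setminus\{e\})$, where $v$ is a $\Gamma$-valued harmonic map on $H\setminus\{e\}$ satisfying $v\equiv\gamma_0$ and $\partial_n v\equiv 0$ on $\partial H$, and with local degree $d_\ell$ around $e$.

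The rigidity step: writing $v = \tau(e^{\im\phi})$ with $\phi$ harmonic on $H\setminus\{e\}$ (multi-valued, of monodromy $2\pi d_\ell$ at $e$) and normalising $\tau^{-1}(\gamma_0)=0$, the boundary conditions translate into zero Cauchy data $\phi\equiv 0$ and $\partial_n\phi\equiv 0$ on $\partial H$. Cauchy--Kovalevskaya applied to these real-analytic Cauchy data in a neighbourhood of any boundary point, combined with unique continuation for harmonic functions, yields $\phi\equiv 0$ on $H\setminus\{e\}$ in any chosen local branch; this is compatible with a monodromy $2\pi d_\ell$ only when $d_\ell = 0$, and it forces $v\equiv\gamma_0$ on $H\setminus\{e\}$. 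With $d_\ell = 0$ there is no topological obstruction at $e$, so a clearing-out estimate applied on a small ball around $e$ in the rescaled picture upgrades the convergence $v_\varepsilon\to\gamma_0$ to $L^\infty$-convergence near $e$. But then $u_\varepsilon(x_\ell^\varepsilon) = v_\varepsilon(e)\to\gamma_0\in\Gamma$, contradicting $\dist(u_\varepsilon(x_\ell^\varepsilon),\Gamma) > \delta_2 > 0$, which is the defining property of a bad-disc centre.

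The hard part will be making the rescaled convergence of the second paragraph rigorous: one must adapt \rth{th:conv-geps}--\rth{th:conv-bdry} to the family $\Omega'_\varepsilon$, which converges only locally to a half-plane and carries $\varepsilon$-dependent boundary data whose only uniform controls are pointwise convergence to a constant and an $L^2$-norm of the normal derivative tending to $0$. The strict star-shapedness hypothesis enters in an essential way precisely here: the extra $\rho_\varepsilon$-factor produced by rescaling the Pohozaev boundary integral is what degenerates the Neumann condition in the limit, and this zero Neumann datum---together with the zero Dirichlet coming from the pointwise limit of $g$---is the rigidity input underlying the Cauchy--Kovalevskaya step.
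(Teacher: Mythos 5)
Your proof goes a genuinely different route from the paper's. The paper's argument is a one-step application of \cite[Lemma~X.14]{bbh94} to the $\so$-valued harmonic map $v_* = \tau^{-1}\circ u_*$: the Pohozaev bound \eqref{eq:Pohozaev}, combined with Proposition~\ref{th:conv-comp}, gives $\int_{\partial\Omega}\left|\frac{\partial u_*}{\partial n}\right|^2<\infty$, and Lemma~X.14 (boundary regularity for an $\so$-valued harmonic map with smooth Dirichlet trace, square-integrable Neumann trace, and a $W^{1,p}$ bound) then shows $v_*$ is smooth up to $\partial\Omega$, so no $a_j$ can lie there. Your plan instead rescales $u_\varepsilon$ at the scale $\rho_\varepsilon=\dist(x_\ell^\varepsilon,\partial\Omega)$ and obtains a contradiction at the $\varepsilon$-level. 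Both arguments run on the same fuel --- the Pohozaev estimate \eqref{eq:Pohozaev} and its degeneration at a putative boundary singularity --- but the paper's is a soft regularity statement for the limit, while yours is a hard compactness statement for the rescaled family; yours, if completed, would be self-contained and bypass the BBH $\so$-valued boundary regularity machinery, at the cost of substantially more work.

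There is a genuine gap in the final step. You invoke ``a clearing-out estimate'' to pass from $d_\ell=0$ and convergence-away-from-$e$ to $v_\varepsilon(e)\to\gamma_0$, but clearing-out is not a black box here: small-energy $\eta$-compactness needs a local energy bound small compared to $|\log\tilde\varepsilon|$, whereas the Dirichlet energy of $v_\varepsilon$ in a fixed ball $B_R(e)$ can be as large as $E_\varepsilon(u_\varepsilon)\sim|\log\varepsilon|$; and the degree condition alone does not erase the bad disc, which is a metric condition ($\dist(v_\varepsilon,\Gamma)>\delta_2$), not a topological one. The argument can be rescued, but by a different mechanism that must be made explicit. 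Once the rigidity gives $v\equiv\gamma_0$ on $H\setminus\{e\}$ and the local $C^{1,\alpha}$ convergence away from $e$ gives $v_\varepsilon\to\gamma_0$ in $C^1(\partial B_R(e))$ for some fixed $R\in(0,1)$, the interior Pohozaev identity on $B_R(e)$ (with vector field $V(y)=y-e$) reads
\[
\frac{2}{\tilde\varepsilon^2}\int_{B_R(e)}W(v_\varepsilon)
+\frac{R}{2}\int_{\partial B_R(e)}\left|\frac{\partial v_\varepsilon}{\partial n}\right|^2
=\frac{R}{2}\int_{\partial B_R(e)}\left|\frac{\partial v_\varepsilon}{\partial \sigma}\right|^2\longrightarrow 0,
\]
so $\int_{B_R(e)}W(v_\varepsilon)/\tilde\varepsilon^2\to0$, while the rescaled version of \eqref{eq:104} pins a fixed positive potential energy at the bad disc near $e$ --- contradiction. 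Two further issues are less serious but should be addressed: (i) Theorems~\ref{th:conv-geps} and \ref{th:conv-bdry} are stated on a fixed domain, and using them on the expanding domains $\Omega'_\varepsilon$ that converge only locally to a half-plane requires a moving-domain version; you flag this, but it is a real piece of work, and the constants in Section~\ref{sec:dep-eps} depend on the geometry; (ii) if $J_{j_0}$ has more than one element you should take $\rho_\varepsilon$ to be the \emph{minimal} distance over $\ell\in J_{j_0}$ so that no rescaled bad disc limits to a point of $\partial H$, and accept that the limit singular set is a finite subset of $H$ rather than a single $e$ --- the half-plane rigidity argument is unchanged, but the degree bookkeeping becomes $\sum_{\ell}d_\ell=0$ for the finite cluster rather than $d_\ell=0$ for a single $\ell$; the Pohozaev contradiction above still goes through with $B_R(e)$ replaced by a ball containing the whole cluster.
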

\begin{proof}
  The proof is the same as that of \cite[Theorem X.4]{bbh94}, so we
  just mention the main idea. By Pohozaev identity \eqref{eq:Pohozaev}
  and Proposition \ref{th:conv-comp} it follows that
  \begin{equation}
    \label{eq:95}
   \int_{\partial\Omega} \left|\frac{\partial u_*}{\partial n}\right|^2<\infty.
  \end{equation}
 The map $v_*:=\tau^{-1}\circ u_*$ is an $\so$-valued smooth harmonic map on
 $\overline{\Omega}\setminus\{a_1,\ldots, a_N\}$, and  satisfies: $v_*\in W^{1,p}(\Omega ; \so)$ for all
 $p\in[1,2)$, $v_*=\tau^{-1}\circ g$ on $\partial\Omega$, and thanks to \eqref{eq:95}, also
 \begin{equation*}
   \int_{\partial\Omega} \left|\frac{\partial v_*}{\partial n}\right|^2<\infty.
  \end{equation*}
  
 Therefore, all the hypotheses of \cite[Lemma X.14]{bbh94} are satisfied,
 and we can conclude that $v_*$ is smooth in a neighborhood of
 $\partial\Omega$. Clearly, the same holds for $u_*$.
\end{proof}
To conclude the proof of \rth{th:main} we need to show that the limit
$u_*$ has the form given in \eqref{eq:91}.
\begin{proposition}
\l{p515}
 We have
\begin{equation}
\label{eq:96}
  u_*(z)=\tau\left(e^{\im \eta(z)}\left(\frac{z-a_1}{|z-a_1|}\right)^{D_1}\cdots\left(\frac{z-a_N}{|z-a_N|}\right)^{D_N}\right), 
\end{equation}
 for some smooth harmonic function $\eta$ in $\overline \Omega$  and $D_1,\ldots,D_N\in\Z\setminus\{0\}$.
\end{proposition}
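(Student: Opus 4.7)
The plan is to extract the explicit vortex profiles $((z-a_j)/|z-a_j|)^{D_j}$ from $u_*$ and identify the remaining factor as the exponential of a smooth harmonic function. First, I would set $v_*:=\tau^{-1}\circ u_*$; by Proposition~\ref{th:conv-comp} and the preceding proposition, $v_*$ is a smooth $\so$-valued harmonic map on $\overline\Omega\setminus\{a_1,\ldots,a_N\}$, smooth up to $\partial\Omega$, and belongs to $\bigcap_{p<2}W^{1,p}(\Omega;\so)$; moreover Theorem~\ref{th:bound} and weak lower semicontinuity give $\int_{\widetilde\Omega_r}|\nabla v_*|^2\le C(r)$ for every $r>0$. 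Let $D_j$ be the degree of $v_*$ around $a_j$ (which coincides with $\sum_{\ell\in J_j}d_\ell$), set $g_0(z):=\prod_{j=1}^N((z-a_j)/|z-a_j|)^{D_j}$, and consider $w:=v_*\cdot\overline{g_0}$. Then $w$ is $\so$-valued, has zero degree around every $a_j$, and, since $\sum_j D_j=d=\deg g$, zero degree on $\partial\Omega$ as well.

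The core of the argument is a removable-singularity statement: $w$ extends as a smooth $\so$-valued map to all of $\overline\Omega$. Being the ratio of two $\so$-valued harmonic maps, $w$ is itself an $\so$-valued harmonic map on $\overline\Omega\setminus\{a_1,\ldots,a_N\}$. Near a fixed $a_j$ the zero-degree condition permits a single-valued lift $w=e^{\im\chi_j}$ on $B_\rho(a_j)\setminus\{a_j\}$ with $\chi_j$ real-valued and harmonic. Its Laurent--Fourier expansion reads
\bes
\chi_j(r,\theta)=c_0\log r+\widetilde\chi_j(r,\theta)+\sum_{n\ge 1}r^{-n}\bigl(\alpha_n\cos n\theta+\beta_n\sin n\theta\bigr),
\ees
where $\widetilde\chi_j$ is a smooth harmonic function on $B_\rho(a_j)$. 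The $L^p$-control for $p$ close to $2$ (inherited from $v_*$ and $g_0$) kills all coefficients $\alpha_n,\beta_n$ with $n\ge 1$, since $\nabla(r^{-n}\cos n\theta)\notin L^p$ whenever $p\ge 2/(n+1)$. To kill the logarithm, I would exploit the orthogonality $\int_{B_r(a_j)\setminus B_\rho(a_j)}\nabla\chi_j\cdot\nabla\theta_j=0$ (a consequence of $\partial\theta_j/\partial n\equiv 0$ on circles centered at $a_j$) combined with the pointwise identity $|\nabla v_*|^2=|\nabla\chi_j|^2+D_j^2|\nabla\theta_j|^2+2D_j\nabla\chi_j\cdot\nabla\theta_j$ to obtain
\bes
\int_{B_r\setminus B_\rho}|\nabla v_*|^2=\int_{B_r\setminus B_\rho}|\nabla\chi_j|^2+2\pi D_j^2\log(r/\rho).
\ees
Matching this with the sharp upper bound $\int|\nabla v_*|^2\le 2\pi D_j^2\log(r/\rho)+O(1)$, obtained by adapting the Bethuel--Brezis--H\'elein vortex analysis of \cite[Chapter~X]{bbh94} with the Pohozaev identity of Lemma~\ref{lem:pohozaev}, forces $c_0=0$. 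Hence $\chi_j$ is bounded harmonic on the punctured disc, extends smoothly to $a_j$, and $w$ is globally smooth on $\overline\Omega$.

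Once $w\in C^\infty(\overline\Omega;\so)$ is established, since $\overline\Omega$ is simply connected (being strictly star-shaped) there is a global smooth lift $w=e^{\im\eta}$ with $\eta\in C^\infty(\overline\Omega;\R)$; and since $w$ is an $\so$-valued harmonic map, the lift $\eta$ satisfies $\Delta\eta=0$. Substituting back yields
\bes
u_*=\tau(v_*)=\tau\!\left(e^{\im\eta(z)}\prod_{j=1}^N\Big(\frac{z-a_j}{|z-a_j|}\Big)^{D_j}\right),
\ees
as desired. Finally, if $D_j=0$ for some $j$, the corresponding factor is $1$ and the removability argument shows $u_*$ is smooth at $a_j$; after pruning all such points from the list, all remaining $D_j$ lie in $\Z\setminus\{0\}$. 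The hard part will be the sharp logarithmic upper bound on $\int_{B_r\setminus B_\rho(a_j)}|\nabla v_*|^2$: this is the only ingredient that requires more than routine adaptation of the $\so$-harmonic-map toolbox, and it relies on the vortex-type lower-bound machinery of \cite{bbh94} transposed to our general potential $W$.
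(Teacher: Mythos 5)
Your approach is genuinely different from the paper's, and it has a gap at the decisive step. The paper kills the logarithm $c_j\log r$ with the Hopf differential, not with energy estimates. Setting $\omega_\ve:=|(\ue)_{x_1}|^2-|(\ue)_{x_2}|^2-2\imath(\ue)_{x_1}\cdot(\ue)_{x_2}$, the Euler--Lagrange equation gives $\partial\omega_\ve/\partial\bar z=\partial(2W(\ue)/\ve^2)/\partial z$; subtracting the convolution $\alpha_\ve$ of $W(\ue)/\ve^2$ with the Cauchy kernel leaves a holomorphic function $\beta_\ve$. Since $W(\ue)/\ve^2\stackrel{*}{\rightharpoonup}\sum m_j\delta_{a_j}$ with $m_j>0$, the limit Hopf differential is $\omega_*=\beta_*-(2/\pi)\sum_j m_j\,\pv(z-a_j)^{-2}$ with $\beta_*$ holomorphic in $\Omega$. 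On the other hand, using the local structure from \cite[Remark~I.1]{bbh94}, one writes $u_*=\tau(\exp(\imath D_j\theta+\imath c_j\log r+\imath h))$ near $a_j$, whence $\omega_*=\left((c_j-\imath D_j)/z+2h_z\right)^2$, with leading coefficient $(c_j-\imath D_j)^2$. Matching the two expressions forces $(c_j-\imath D_j)^2=-2m_j/\pi$, a real negative number: the imaginary part gives $c_jD_j=0$ and the real part gives $D_j^2>c_j^2\ge 0$, so $D_j\ne 0$ and $c_j=0$ simultaneously. This is purely algebraic and requires no sharp energy bound.

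The gap in your plan is precisely the ``sharp upper bound'' $\int_{B_r\setminus B_\rho}|\nabla v_*|^2\le 2\pi D_j^2\log(r/\rho)+O(1)$, which you describe as ``adapting the BBH vortex analysis.'' This bound is essentially a local version of the expansion $E_\ve(\ue)=\pi\left(\sum D_j^2\right)|\log\ve|+O(1)$ stated in \eqref{eq:119}, and the paper explicitly says that it does \emph{not} establish \eqref{eq:119} (only the cruder $E_\ve\le C(|\log\ve|+1)$ of \rprop{prop:L2}) and leaves it as an open question for general $W$. Deriving it would require both a vortex-core lower bound (the hard $\eta$-compactness and annulus estimates of \cite[Ch.~X]{bbh94}) and a matching $O(1)$-precise upper bound, neither of which is in the paper, and without which the identity $\int_{B_r\setminus B_\rho}|\nabla v_*|^2=\int_{B_r\setminus B_\rho}|\nabla\chi_j|^2+2\pi D_j^2\log(r/\rho)$ tells you nothing about $c_0$. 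Note the logical order: it is the vanishing of $c_j$ that \emph{produces} the sharp energy expansion, not the other way around -- invoking the expansion to kill $c_j$ is close to circular unless you independently reprove Chapter~X. The Hopf-differential route is precisely the device that sidesteps this; it extracts the coefficient of the $z^{-2}$ pole from the PDE directly and gets $c_j=0$, $D_j\ne 0$ and the formula $m_j=\pi D_j^2/2$ for free. (A minor additional remark: your ``prune the $D_j=0$ points'' step is not needed in the paper because each $a_j$ is a limit of bad-disc centers, and \eqref{eq:104} forces $m_j>0$, whence $D_j\ne 0$ automatically.)
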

 
 Equivalently, Proposition \ref{p515} asserts that 
the $\so$-valued harmonic map $\tau^{-1}\circ u_*$ is the canonical
harmonic map associated with $\tau^{-1}\circ g$ and $\{(a_j,D_j)\}_{j=1}^N$, as
defined in \cite[Sec~I.3]{bbh94}.
\begin{proof}
  We apply the same argument as in \cite[Ch.~VII]{bbh94}, which uses
  the Hopf differential. Setting
  \begin{equation*}
    \omega=\omega_\ve:=\left|(\ue)_{x_1}\right|^2-\left|(\ue)_{x_2}\right|^2-2\imath\, (\ue)_{x_1}\cdot (\ue)_{x_2},
  \end{equation*}
 we find by a direct computation
 \begin{equation}
   \label{eq:97}
   \frac{\partial\omega}{\partial\overline z}=\frac{1}{2}\left(
   \omega_{x_1}+\imath\, \omega_{x_2}\right)=\Delta\ue\cdot\left((\ue)_{x_1}-\imath\, (\ue)_{x_2}\right)=2\Delta\ue\cdot\frac{\partial\ue}{\partial z}.
 \end{equation}
 
Moreover, by \eqref{eq:EL},
\begin{equation}
  \label{eq:98}
 \frac{\partial }{\partial z}W(\ue)=\nabla W(\ue)\cdot\frac{\partial\ue}{\partial z}=\varepsilon^2\Delta\ue\cdot
 \frac{\partial\ue}{\partial z}.
\end{equation}

By \eqref{eq:97}--\eqref{eq:98},
\begin{equation*}
  \frac{\partial\omega}{\partial\overline z}=\frac{\partial}{\partial z}\left(\frac{2W(\ue)}{\varepsilon^2}\right).
\end{equation*}
 Note that up to a further subsequence we have
 \begin{equation}
   \label{eq:109}
  \frac{W(\ue)}{\varepsilon^2}\stackrel{*}{\rightharpoonup}\sum_{j=1}^N m_j\delta_{a_j},
 \end{equation}
 for some positive  $m_j$'s. (Convergence is in the weak
 star topology of $C(\overline\Omega)$.) Indeed, combining  
 \eqref{eq:41-4} and \eqref{eq:96-3} we obtain, for any sufficiently
 small $R>0$,
 \begin{equation*}
   W(\ue)\leq C_R\varepsilon^2\text{ in }\overline\Omega\setminus\bigcup_{j=1}^N B_R(a_j),
 \end{equation*}
 which clearly implies \eqref{eq:109} with $m_j\ge 0$. The fact that $m_j>0$ for all
 $j$ follows from \eqref{eq:104}. 
 
 Defining the distribution
 \begin{equation*}
   \alpha=\alpha_\ve:=\frac{\partial}{\partial z}\left[\left(\frac{1}{\pi z}\right)\ast\left(\chi_\Omega\frac{W(\ue)}{\varepsilon^2}\right)\right],
 \end{equation*}
 we obtain by a direct calculation that  $\beta=\beta_\ve:=\omega-2\alpha$ is a
 holomorphic function in $\Omega$ (see also \cite{bbh94}). 
 
 Since, by \eqref{eq:41-2}--\eqref{eq:41-3},
 \begin{equation*}
   \left\|\frac{1}{\varepsilon^2}\nabla W(\ue)\frac{\partial\ue}{\partial x_j}\right\|_{L^\infty(\overline\Omega\setminus\bigcup_{j=1}^N
     B_R(a_j))}\leq C,\  j=1,2,
 \end{equation*}
we obtain that $\d\left\{\frac{W(\ue)}{\varepsilon^2}\right\}$ is bounded in $C^1(\overline\Omega\setminus\bigcup_{j=1}^N B_R(a_j))$,
 and we deduce by the argument of
 \cite{bbh94} that $\{\beta_\ve\}$ is bounded in
 $C^0_{\text{loc}}(\Omega)$. It follows that, up to a further subsequence, $\beta_\ve\to\beta_\ast$ in $C^k_{\text{loc}}(\Omega)$,
 $\forall\, k$, for some holomorphic function $\beta_*$ in
 $\Omega$. In addition, using \eqref{eq:109} we find that
 \begin{equation}
\label{eq:102}
   \alpha_\ve\to\alpha_\ast:=\frac{\partial}{\partial z}\left(\frac{1}{\pi
     z}\right)\ast\sum_{j=1}^N m_j\delta_{a_j} =-\frac 1\pi\, \sum_{j=1}^N m_j\, \pv\frac 1{(z-a_j)^2}\text{ in }{\mathcal D}'(\R^2).
 \end{equation}
 
Therefore, $\omega_\ve=\beta_\ve+2\alpha_\ve\to \beta_\ast+2\alpha_\ast$ in
${\mathcal D}'(\Omega)$.

 Since Proposition \ref{th:conv-comp} implies that
\bes
\omega_\ve
\to \omega_*:=\left|(u_*)_{x_1}\right|^2-\left|(u_*)_{x_2}\right|^2-2\imath\, (u_*)_{x_1}\cdot (u_*)_{x_2}\text{ in }C^0_{\text{loc}}(\overline\Omega\setminus \{a_1,\ldots,a_N\}),
\ees
we obtain 
 \begin{equation}
   \label{eq:103}
\omega_*=\beta_*+2\alpha_*\text{ in } {\mathcal D}'(\Omega\setminus \{a_1,\ldots,a_N\}).
 \end{equation}

Fix any $j\in\{1,\ldots,N\}$ and assume without loss of generality that $a_j=0$. Recall that $\tau^{-1}\circ u$ is a harmonic map in $\O\setminus \{ a_1,\ldots, a_N\}$ (Proposition \ref{th:conv-comp}) and belongs to $W^{1,p}(\O)$ when $1\le p<2$ (Proposition \ref{prop:W1p}). In addition, we have $\deg (u_*, 0)=D_j$.  Arguing as in \cite[Remark I.1]{bbh94}
we may  write, near $0$,   
\begin{equation*}
  u_*=\tau\left(\exp(\im D_j\theta+\im c_j\log r+\im h)\right),
\end{equation*}
 where $h$ is a harmonic function.
 
It follows that if we write, locally near $0$, $u_*=\tau(e^{\im \varphi})$ with
$\varphi:=D_j\theta+c_j\log r+h$, then we have
\begin{equation}
\label{eq:101}
  \omega_*=\left|(u_*)_{x_1}\right|^2-\left|(u_*)_{x_2}\right|^2-2\imath\, (u_*)_{x_1}\cdot (u_*)_{x_2}=\left(\varphi_{x_1}-\imath\, \varphi_{x_2}\right)^2=\left(\frac{c_j-\imath\, D_j}{z}+2\frac{\p h}{\p z}\right)^2.
\end{equation}
 
 From \eqref{eq:102}--\eqref{eq:101} we
 obtain
 \begin{equation*}
   (c_j-\imath\, D_j)^2=-\frac{2\, m_j}{\pi},
 \end{equation*}
 implying that  $c_j=0$  and also $\d m_j=\frac{\pi\, D_j^2}{2}$. The fact
 that $c_j=0$ for all $j$ implies that $u_*$ has the form
 \eqref{eq:96}. Since we know already that $m_j\neq0$ for all $j$, it
 follows that 
 also $D_j\neq0$ for
all $j$. 
\end{proof}
\begin{remark}
  \label{rem:ren} Arguing as in \cite[Ch.~VII]{bbh94}, we may conclude
  from \eqref{eq:101} that $\p h/\p z=0$. This implies that the
  configuration $(a_1,\ldots,a_N)$ is a critical point of the
  renormalized energy associated with the degrees $(D_j)_{j=1}^N$ and
  the $\so$-valued boundary condition $\tau^{-1}\circ g$, see
  \cite[Corollary VIII.1]{bbh94}.
\end{remark}
\bibliographystyle{chicago}
%\bibliography{start1}

\end{document}